\documentclass[a4paper,12pt]{article}

\usepackage[left=2cm,right=2cm, top=2cm,bottom=2cm,bindingoffset=0cm]{geometry}

\usepackage{verbatim}
\usepackage{amsmath}
\usepackage{amsthm}
\usepackage{amssymb}
\usepackage{delarray}
\usepackage{hyperref}
\usepackage{mathrsfs}

\newcommand{\al}{\alpha}

\newcommand{\de}{\delta}
\newcommand{\la}{\lambda}
\newcommand{\om}{\omega}

\newcommand{\eps}{\varepsilon}

\newcommand{\iy}{\infty}
%\renewcommand{\leq}{\leqslant}
%\renewcommand{\geq}{\geqslant}
%\renewcommand{\le}{\leqslant}
%\renewcommand{\ge}{\geqslant}

%for figures
\usepackage{graphicx}
\usepackage{tikz}
\usetikzlibrary{patterns}
\usepackage{caption}
\DeclareCaptionLabelSeparator{dot}{. }
\captionsetup{justification=centering,labelsep=dot}

\numberwithin{equation}{section}

\theoremstyle{plain}

\newtheorem{thm}{Theorem}[section]
\newtheorem{lem}{Lemma}[section]
\newtheorem{cor}{Corollary}[section]

\theoremstyle{definition}

\theoremstyle{remark}

\newtheorem{remark}{Remark}[section]

\DeclareMathOperator*{\Res}{Res}
\DeclareMathOperator{\rank}{rank}

\sloppy \allowdisplaybreaks

\begin{document}

\begin{center}
{\large\bf Inverse scattering on the line for the matrix Sturm-Liouville equation
}
\\[0.2cm]
{\bf Natalia Bondarenko} \\[0.2cm]
\end{center}

\vspace{0.5cm}

{\bf Abstract.} The inverse scattering problem is studied for the matrix Sturm-Liouville equation on the line.
Necessary and sufficient conditions for the scattering data are obtained.

\medskip

{\bf Keywords:} matrix Sturm-Liouville equation, inverse scattering problem, necessary and sufficient conditions,
Gelfand-Levitan-Marchenko equation.

\medskip

{\bf AMS Mathematics Subject Classification (2010):} 34L25 34A55 34L40

\vspace{1cm}

{\large \bf 1. Introduction}

\setcounter{section}{1}
\setcounter{equation}{0}

\medskip

In this paper, we consider the matrix Sturm-Liouville (also called Schr\"odinger) equation on the real line:
\begin{equation} \label{eqv}
-Y'' + Q(x) Y = \rho^2 Y, \quad -\infty < x < \infty.
\end{equation}
Here $Y = [y_k(x)]_{k = 1}^m$ is a vector function, $\rho$ is the spectral parameter, and $Q(x) = [Q_{jk}(x)]_{j, k = 1}^n$
is the self-adjoint matrix potential $(Q(x) = Q^*(x))$, satisfying the condition
\begin{equation} \label{xQ}
\int_{-\infty}^{\infty} (1 + |x|) | Q_{jk}(x) | \, dx < \infty, \quad j, k = \overline{1, m}.
\end{equation} 
The inverse scattering problem is stidued, which consists in recovering of the potential $Q$ from the given scattering data.

There is an extensive literature on the inverse spectral and scattering problems 
(see monographs \cite{Mar77, Lev84, PT87, FY01, CS77} and references therein). Such problems arise
in quantum mechanics, geophysics, electronics, chemistry and other branches of science and engineering.
One of the most important applications is the inverse scattering method for integration of nonlinear evolution equations, 
such as Korteweg-de Vries (KdV) equation, nonlinear Schr\"odinger equation, sine-Gordon equation, Toda lattice, etc.
(see \cite{ZMNP, AS81, FT87, CD82}).

A complete analysis of the inverse scattering problem for the {\it scalar} Sturm-Liouville equation on the real line (equation \eqref{eqv} for $m = 1$)
was carried out by L.D. Faddeev \cite{Fad64} and V.A. Marchenko \cite{Mar77}. They have obtained the characterization of the scattering data,
in other words, necessary and sufficient contions for the solvability of the inverse scattering problem.

Inverse scattering problems for {\it matrix} Sturm-Liouville operators appeared to be more difficult, than scalar ones,
because of the more complicated structure of the discrete scattering data. 
For the matrix case, Z.S. Agranovich and V.A. Marchenko \cite{AM60} solved the inverse scattering problem of the {\it half-line},
using the trasformation operator method \cite{Mar77, Lev84}. Later on, several authors studied the inverse scattering problem 
for the matrix Sturm-Liouville operator on the {\it line} \cite{WK74, CD77, Wad80, AO82, Olm85, Schuur83, AG98}. 
In particular, M. Wadati and T. Kamijo \cite{WK74} reduced the inverse problem for the self-adjoint potential to the Gelfand-Levitan-Marchenko
equation (linear Fredholm integral equation, connecting the scattering data with the kernel of the transformation operator). 
E. Olmedilla \cite{Olm85} generalized their results to the non-self-adjoint case.
F. Calogero and A. Degasperis \cite{CD77} study applications of the inverse scattering problem to the integration of matrix nonlinear evolution equations
such as matrix KdV and Boomeron equations. However, as far as we know, a rigorous mathematical analysis of the solvability of the Gelfand-Levitan-Marchenko
equation and characterization of the scattering data were not done before for the matrix case. The goal of this paper is to cover this gap, and provide 
necessary and sufficient conditions for the solvability of the inverse scattering problem for equation \eqref{eqv}.

The paper is organized as follows. In Section~2, we introduce the left and the right scattering data for equation \eqref{eqv}, study their properties,
formulate the inverse scattering problem and give other preliminaries. In Section 3, the Gelfand-Levitan-Marchenko equation
is derived. Althouth many of the results of Sections 2 and 3 appeared before in \cite{AM60, WK74, AK01} and other literature,
we provide their proofs for the convenience of the reader. In Section 4, we give sufficient conditions for the unique solvability of the Gelfand-Levitan-Marchenko equation. 
However, these conditions are not sufficient for the solvability of the inverse scattering problem, because 
the solution of the main equation, constructed, for example, by the right scattering data, is guaranteed to fulfill \eqref{xQ} only on the right half-line.
Therefore we have to study the connection between the left and the right scattering data. 
In Section 5, we formulate and prove our main result, Theorem~\ref{thm:NSC} on the necessary and sufficient conditions
on the scattering data of the matrix Sturm-Liouville operator. We show how the left scattering data can be constructed
by the right ones. Then the resulting potential satisfies \eqref{xQ} on the full line, and we prove that its scattering data coincide with the given ones. 
We also investigate the case of reflectionless potentials 
with only discrete scattering data, and obtain the characterization for them as a corollary of the main theorem.
Finally, we discuss a connection of our necessary and sufficient conditions with the Riemann problem,
and apply them to the matrix KdV equation.

Let us introduce the notation.
We consider the Hilbert space of complex column $m$-vectors $\mathbb{C}^m$ with the following scalar product and the norm
$$
    (Y, Z) = \sum_{j = 1}^m \overline{y_j} z_j, \quad \| Y \| = \left( \sum_{j = 1}^m |y_k|^2 \right)^{1/2}, \quad Y = [y_j]_{j = \overline{1, m}}, \,
    Z = [z_j]_{j = \overline{1, m}},
$$
the space of row vectors $\mathbb{C}^{m, T}$, and the space of complex $m \times m$ matrices $\mathbb{C}^{m \times m}$ with the corresponding induced norm
$$
\| A \| = \max_{Y \in \mathbb{C}^m, \, \| Y \| = 1} \| A Y \|, \quad A = [a_{jk}]_{j, k = \overline{1, m}}.
$$
The symbols $I_m$ and $0_m$ are used for the unit $m \times m$ matrix and the zero $m \times m$ matrix, respectively.
The symbol ``$^*$'' denotes the conjugate transpose.

We use the notation $\mathcal{A}(\mathcal{I}; \mathbb{C}^m)$, $\mathcal{A}(\mathcal{I}; \mathbb C^{m, T})$ and
$\mathcal{A}(\mathcal{I}; \mathbb C^{m\times m})$ for classes of column vectors, row vectors and matrices, respectively,
with entries belonging to the class $\mathcal{A}(\mathcal{I})$
of scalar functions. The symbol $\mathcal{I}$ stands for an interval.
For example, the potential $Q$ belongs to the class $L((-\iy, \iy); \mathbb{C}^{m \times m})$. 

\bigskip

{\bf \large 2. Properties of the scattering data}

\medskip

\setcounter{section}{2}
\setcounter{equation}{0}

{\bf 2.1. Jost solutions.} 
Let us introduce matrix Jost solutions of equation \eqref{eqv} with the prescribed behavior 
at $\pm \infty$. Equation \eqref{eqv} has unique matrix solutions $F_{\pm}(x, \rho)$ for $\mbox{Im}\, \rho \ge 0$, 
such that $\lim\limits_{x \to \pm \infty} \exp(\mp i \rho x) F_{\pm}(x, \rho) = I_m$.
The Jost solutions $F_{\pm}(x, \rho)$ satisfy the integral equations
\begin{equation} \label{Jostint}
\begin{array}{l}
F_+(x, \rho) = \exp(i\rho x) \, I_m + \displaystyle\int_x^{\iy} \frac{\sin \rho (t - x)}{\rho} Q(t) F_+(t, \rho) \, dt, \\ [10pt]
F_-(x, \rho) = \exp(-i\rho x) \, I_m + \displaystyle\int_{-\iy}^x \frac{\sin \rho (x - t)}{\rho} Q(t) F_-(t, \rho) \, dt,
\end{array}
\end{equation}
and have the following {\bf properties}.

($i_1$) For each fixed $x$, the matrix functions $F^{(\nu)}_{\pm}(x, \rho)$, $\nu = 0, 1$, are
analytic for $\mbox{Im}\, \rho > 0$ and continuous for $\mbox{Im}\, \rho \ge 0$.

($i_2$) For $\nu = 0, 1$
\begin{equation} \label{Jostasymptx}
\begin{cases}
 F_+^{(\nu)}(x, \rho) = (i \rho)^{\nu} \exp(i \rho x)(I_m + o(1)), \quad x \to +\infty, \\
 F_-^{(\nu)}(x, \rho) = (-i \rho)^{\nu} \exp(-i\rho x)(I_m + o(1)), \quad x \to -\infty,
\end{cases}
\end{equation}  
uniformly in $\{ \rho \colon \mbox{Im}\,\rho \ge 0 \}$.

($i_3$) For each fixed $\rho$, $\mbox{Im}\,\rho \ge 0$, and real $a$, 
$$
 	F_+(x, \rho) \in L_2((a, \iy); \mathbb{C}^{m \times m}), \quad 
 	F_-(x, \rho) \in L_2((-\iy, a); \mathbb C^{m \times m}).
$$
Moreover, every vector solution of \eqref{eqv} from $L_2((a, \iy); \mathbb C^m)$ 
(or $L_2((-\iy, a); \mathbb C^{m}$) can be represented in the form
$F_+(x, \rho) V(\rho)$ (or $F_-(x, \rho) V(\rho)$, respectively), where $V(\rho) \in \mathbb C^m$.

($i_4$) For $|\rho|\to \iy$, $\mbox{Im}\,\rho \ge 0$, $\nu = 0, 1$,
\begin{equation} \label{Jostasymptrho}
\begin{cases}
 F_+^{(\nu)}(x, \rho) = (i \rho)^{\nu} \exp(i \rho x) \left( I_m + \dfrac{\om_+(x)}{i \rho} + o(\rho^{-1}) \right), 
 \quad \om_+(x) = -\dfrac{1}{2}\displaystyle\int_x^{\iy} Q(t) \, dt, \\
 F_-^{(\nu)}(x, \rho) = (-i \rho)^{\nu} \exp(- i \rho x) \left( I_m + \dfrac{\om_-(x)}{i \rho} + o(\rho^{-1}) \right), 
 \quad \om_-(x) = -\dfrac{1}{2} \displaystyle\int_{-\iy}^x Q(t)\, dt,
\end{cases}
\end{equation}
uniformly for $x \ge a$ and $x \le a$, respectively.

($i_5$) For real $\rho \ne 0$ the columns of the matrix functions $F_+(x, \rho)$ and $F_+(x, -\rho)$ (similarly, $F_-(x, \rho)$
and $F_-(x, -\rho)$) form a fundamental system of solutions for equation \eqref{eqv}.

($i_6$) There are trasformation operators for the Jost solutions:

\begin{equation} \label{transform}
\begin{cases}
F_+(x, \rho) = \exp(i \rho x) I_m + \displaystyle\int_x^{\iy} K_+(x, t) \exp(i \rho t) \, dt, \\
F_-(x, \rho) = \exp(-i \rho x) I_m + \displaystyle\int_{-\iy}^x K_-(x, t) \exp(-i\rho t)\, dt,
\end{cases}
\end{equation}
where the kernels $K_{\pm}(x, t)$ are continuous functions, having first derivatives with respect to $x$ and $t$,
the matrix functions
$$
   \frac{\partial}{\partial x} K_{\pm}(x, t) \pm \frac{1}{4} Q\left( \frac{x + t}{2}\right), \quad
   \frac{\partial}{\partial t} K_{\pm}(x, t) \pm \frac{1}{4} Q\left( \frac{x + t}{2}\right)
$$
are absolutely continuous with respect to $x$ and $t$,
and
\begin{equation} \label{recQ}
 	Q(x) = \mp 2 \frac{d}{dx} K_{\pm}(x, x).
\end{equation}
% other facts?

Note that, in fact, the Jost solutions are constructed for half-lines $(a, +\iy)$ and $(-\iy, a)$.
So one can find the proofs of the properties ($i_1$)-($i_6$), for example, in \cite{AM60}.

\medskip

{\bf 2.2. Matrix Wronskian.} Together with equation \eqref{eqv}, consider the following equation
\begin{equation} \label{eqv*}
-Z'' + Z Q(x) = \rho^2 Z, \quad -\iy < x < \iy,
\end{equation}
where $Z = Z(x)$ is a row vector. 
Define the matrix Wronskian $\langle Z, Y \rangle := Z'Y - Z Y'$.
If $Y(x, \la)$ and $Z(x, \la)$ satisfy equations \eqref{eqv} and \eqref{eqv*}, respectively, then
\begin{equation} \label{wron}
 	\frac{d}{dx} \langle Z(x, \la), Y(x, \la) \rangle = 0,
\end{equation}
so the expression $\langle Z(x, \la), Y(x, \la) \rangle$ does not depend on $x$.

One can introduce the Jost solutions $\bar F_{\pm}(x, \rho)$
for equation \eqref{eqv*} with properties, similar to ($i_1$)-($i_6$). Since the potential matrix $Q(x)$ is Hermitian,
one can easily check that 
\begin{equation} \label{barF}
\bar F_{\pm}(x, \rho) = F^*_{\pm}(x, -\bar \rho). 
\end{equation}
By virtue of \eqref{wron} and \eqref{Jostasymptx}
\begin{align} \label{wronF1}
   \langle \bar F_{\pm}(x, \rho),  F_{\pm}(x, \rho) \rangle & = \lim_{x \to \pm\iy} (\bar F_{\pm}'(x, \rho) F_{\pm}(x, \rho) - \bar F_{\pm}(x, \rho)  F'_{\pm}(x, \rho)) = 0_m, \\ \label{wronF2}
   \langle \bar F_{\pm}(x, \rho), F_{\pm}(x, -\rho) \rangle & = \lim_{x \to \pm\iy} (\bar F_{\pm}'(x, \rho) F_{\pm}(x, -\rho) - \bar F_{\pm}(x, \rho)  F'_{\pm}(x, -\rho)) = \pm 2 i \rho I_m,
\end{align}
for real $\rho \ne 0$.

\medskip

{\bf 2.3. Scattering matrix.} Since for real $\rho \ne 0$ the columns of the matrices $F_+(x, \rho)$ and $F_+(x, -\rho)$ 
(similarly, $F_-(x, \rho)$ and $F_-(x, -\rho)$) form a fundamental system of solutions for equation \eqref{eqv}, the following relations hold
\begin{align}\label{AB}
F_+(x, \rho) & = F_-(x, -\rho) A(\rho) + F_-(x, \rho) B(\rho), \\ \label{CD}
F_-(x, \rho) & = F_+(x, \rho) C(\rho) + F_+(x, -\rho) D(\rho),
\end{align}
with the $m \times m$ matrix coefficients $A(\rho)$, $B(\rho)$, $C(\rho)$, $D(\rho)$. Let us study the properties of these coefficients.

\begin{lem}
The following relations hold for real $\rho \ne 0$: 
\begin{gather} \label{ABrel}
B^*(-\rho) A(\rho) = A^*(-\rho) B(\rho), \quad
 A^*(\rho) A(\rho) = I_m + B^*(\rho) B(\rho), \\ \label{ABCDrel}
A(\rho) = D^*(-\rho), \quad B(\rho) = - C^*(\rho), \\ \label{wronA}
A(\rho) = -\frac{1}{2i\rho} \langle  \bar F_-(x, \rho), F_+(x, \rho) \rangle, \\ \label{wronB}
B(\rho) = \frac{1}{2i\rho} \langle  \bar F_-(x, -\rho), F_+(x, \rho) \rangle.
\end{gather}
\end{lem}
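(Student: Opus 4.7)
The core tool is the bilinearity of the matrix Wronskian together with the values \eqref{wronF1}, \eqref{wronF2}. Since $\langle \cdot,\cdot\rangle$ is $x$-independent on solution pairs, every $A,B,C,D$ can be pinned down by pairing \eqref{AB} or \eqref{CD} with a suitable $\bar F_\pm(x,\pm\rho)$ on the left and letting $x\to\pm\infty$.

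For \eqref{wronA} and \eqref{wronB}: apply $\langle\bar F_-(x,\rho),\,\cdot\,\rangle$ to both sides of \eqref{AB}. By \eqref{wronF1} the term containing $B(\rho)$ drops out, while \eqref{wronF2} gives $\langle\bar F_-(x,\rho),F_-(x,-\rho)\rangle=-2i\rho\, I_m$, yielding \eqref{wronA}. Replacing $\rho$ by $-\rho$ in \eqref{wronF1}--\eqref{wronF2} and applying instead $\langle \bar F_-(x,-\rho),\,\cdot\,\rangle$ to \eqref{AB} kills the $A(\rho)$ term and gives $\langle\bar F_-(x,-\rho),F_-(x,\rho)\rangle=2i\rho I_m$, which yields \eqref{wronB}. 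The parallel computation starting from \eqref{CD} with $\bar F_+(x,\pm\rho)$ produces the analogous formulas
$D(\rho)=\tfrac{1}{2i\rho}\langle\bar F_+(x,\rho),F_-(x,\rho)\rangle$ and $C(\rho)=-\tfrac{1}{2i\rho}\langle\bar F_+(x,-\rho),F_-(x,\rho)\rangle$, which I will need for \eqref{ABCDrel}.

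For \eqref{ABCDrel}: the key auxiliary identity is that for any row/column solutions one has
$\langle Z,Y\rangle^{*}=-\langle Y^{*},Z^{*}\rangle$, obtained directly from $(Z'Y-ZY')^{*}=Y^{*}(Z^{*})'-(Y^{*})'Z^{*}$. Combined with \eqref{barF}, which for real $\rho$ reads $\bar F_\pm(x,\rho)=F_\pm^{*}(x,-\rho)$ and hence $F_\pm(x,\rho)=\bar F_\pm^{*}(x,-\rho)$, this lets me take the conjugate transpose of the Wronskian formula for $D(-\rho)$ and rewrite it as $-\tfrac{1}{2i\rho}\langle\bar F_-(x,\rho),F_+(x,\rho)\rangle=A(\rho)$, proving $A(\rho)=D^{*}(-\rho)$. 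The same conjugation trick applied to the Wronskian formula for $C(\rho)$ yields $B(\rho)=-C^{*}(\rho)$.

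For \eqref{ABrel}: take the conjugate transpose of \eqref{AB} at real $\rho$. Using \eqref{barF} this becomes
\begin{equation*}
\bar F_+(x,-\rho)=A^{*}(\rho)\bar F_-(x,\rho)+B^{*}(\rho)\bar F_-(x,-\rho).
\end{equation*}
Now form $\langle\bar F_+(x,-\rho),F_+(x,\rho)\rangle$ in two ways: directly from \eqref{wronF2} (with $\rho\mapsto-\rho$) it equals $-2i\rho I_m$; substituting the displayed expansion and using \eqref{wronA}--\eqref{wronB} it equals $-2i\rho\bigl(A^{*}(\rho)A(\rho)-B^{*}(\rho)B(\rho)\bigr)$. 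Comparison gives the second identity in \eqref{ABrel}. Repeating with $\langle\bar F_+(x,-\rho),F_+(x,-\rho)\rangle=0$ (from \eqref{wronF1}) together with the $\rho\mapsto-\rho$ versions of \eqref{wronA}--\eqref{wronB} yields $A^{*}(\rho)B(-\rho)=B^{*}(\rho)A(-\rho)$, equivalent after conjugate transpose to the first identity in \eqref{ABrel}.

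The only mildly delicate step is bookkeeping the signs and the $\rho\leftrightarrow -\rho$ substitutions in \eqref{wronF1}--\eqref{wronF2}, and making sure the identity $\langle Z,Y\rangle^{*}=-\langle Y^{*},Z^{*}\rangle$ is used consistently; beyond that, everything is linear algebra built on the two Wronskian values.
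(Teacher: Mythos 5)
Your proof is correct and follows essentially the same route as the paper: pairing the expansions \eqref{AB}, \eqref{CD} against $\bar F_\pm(x,\pm\rho)$ and evaluating via \eqref{wronF1}--\eqref{wronF2} and \eqref{barF}. The only cosmetic difference is that for \eqref{ABCDrel} you conjugate-transpose the Wronskian formulas for $C(\rho)$, $D(\rho)$ using $\langle Z,Y\rangle^{*}=-\langle Y^{*},Z^{*}\rangle$, whereas the paper evaluates $\langle \bar F_+(x,\rho),F_-(x,\pm\rho)\rangle$ in two ways; both rest on the same identities and your sign bookkeeping checks out.
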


\begin{proof}
Using \eqref{AB} and \eqref{barF}, we derive
\begin{equation} \label{AB*}
\bar F_+(x, \rho) = A^*(-\rho) \bar F_-(x, -\rho) + B^*(-\rho) \bar F_-(x, \rho). 
\end{equation}
Consequently,
\begin{multline*}
\langle \bar F_+(x, \rho), F_+(x, \rho) \rangle = \langle A^*(-\rho) \bar F_-(x, -\rho) + B^*(-\rho) \bar F_-(x, \rho),  F_-(x, -\rho) A(\rho) + F_-(x, \rho) B(\rho) \rangle \\ =
- 2 i \rho B^*(-\rho) A(\rho) + 2 i \rho  A^*(-\rho) B(\rho) = 0_m.
\end{multline*}
Here we have applied \eqref{wronF1} and \eqref{wronF2}.
Similarly we obtain
\begin{multline*}
\langle \bar F_+(x, \rho), F_+(x, -\rho) \rangle = \langle A^*(-\rho) \bar F_-(x, -\rho) + B^*(-\rho) \bar F_-(x, \rho),  F_-(x, \rho) A(-\rho) \\ + F_-(x, -\rho) B(-\rho) \rangle  =
2 i \rho A^*(-\rho) A(-\rho) - 2i\rho B^*(-\rho) B(-\rho) = 2i I_m. 
\end{multline*}
Thus, the relations \eqref{ABrel} are proved.

The relations 
\begin{align*}
\langle \bar F_+(x, \rho), F_-(x, \rho) \rangle & =  \langle A^*(-\rho) \bar F_-(x, -\rho) + B^*(-\rho) \bar F_-(x, \rho), F_-(x, \rho) \rangle = 2 i \rho A^*(-\rho), \\
\langle \bar F_+(x, \rho), F_-(x, \rho) \rangle & = \langle \bar F_+(x, \rho), F_+(x, \rho) C(\rho) + F_+(x, -\rho) D(\rho) \rangle = 2 i \rho D(\rho), \\
\langle \bar F_+(x, \rho), F_-(x, -\rho) \rangle & =  \langle A^*(-\rho) \bar F_-(x, -\rho) + B^*(-\rho) \bar F_-(x, \rho), F_-(x, -\rho) \rangle = -2 i \rho B^*(-\rho), \\
\langle \bar F_+(x, \rho), F_-(x, -\rho) \rangle & = \langle \bar F_+(x, \rho), F_+(x, -\rho) C(-\rho) + F_+(x, \rho) D(-\rho) \rangle = 2 i \rho C(-\rho) 
\end{align*}
yield \eqref{ABCDrel}.

Using \eqref{AB}, \eqref{AB*}, \eqref{wronF1} and \eqref{wronF2} again, we obtain
\begin{align*}
\langle \bar F_-(x,\rho), F_+(x,\rho) \rangle & = \langle \bar F_-(x, \rho), F_-(x, -\rho) A(\rho) + F_-(x, \rho) B(\rho) \rangle = - 2 i \rho A(\rho), \\
\langle \bar F_-(x, -\rho), F_+(x, \rho) \rangle & = \langle \bar F_-(x, -\rho), F_-(x, -\rho) A(\rho) + F_-(x, \rho) B(\rho) \rangle = 2 i \rho B(\rho).
\end{align*}
Finally, we arrive at \eqref{wronA} and \eqref{wronB}.
\end{proof}	

The relation \eqref{wronA} gives the analytic continuation for $A(\rho)$ to the upper half-plane $\mbox{Im}\,\rho > 0$.
Hence, the matrix functions $A(\rho)$ and $D(\rho) = A^*(-\bar \rho)$ are analytic for $\mbox{Im}\,\rho > 0$ and 
$\rho A(\rho)$, $\rho D(\rho)$ are continuous for $\mbox{Im}\,\rho \ge 0$. The matrix functions $\rho B(\rho)$ and $\rho C(\rho)$
are continuous for real $\rho$.

\begin{lem} \label{lem:asymptABCD}
The following asymptotic formulas are valid:
\begin{gather*}
  	A(\rho), D(\rho) = I_m -\frac{\om}{i \rho} + o(\rho^{-1}), \quad \om = \frac{1}{2}\int_{-\iy}^{\iy} Q(t) \, dt \quad \mbox{Im}\,\rho \ge 0, \\
  	B(\rho), C(\rho) = o(\rho^{-1}), \quad \rho \in \mathbb{R},
\end{gather*}
as $|\rho| \to \iy$.
\end{lem}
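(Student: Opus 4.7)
My plan is to exploit the Wronskian formulas \eqref{wronA} and \eqref{wronB} from the preceding lemma to reduce the asymptotics of $A$ and $B$ to those of the Jost solutions, and then use the identities \eqref{ABCDrel} to transfer the result to $D$ and $C$. A useful preliminary step is to convert the expansions of property ($i_4$) for $F_-^{(\nu)}(x,\rho)$ into expansions for the adjoint solution $\bar F_-(x,\rho)=F_-^*(x,-\bar\rho)$: substituting $-\bar\rho$ for $\rho$ in the expansion, taking the adjoint, and invoking the Hermiticity $\omega_-^*=\omega_-$ (an immediate consequence of $Q^*=Q$), I obtain
\[
\bar F_-^{(\nu)}(x,\rho) = (-i\rho)^{\nu}\exp(-i\rho x)\bigl(I_m+\omega_-(x)/(i\rho)+o(\rho^{-1})\bigr),\qquad \nu=0,1,
\]
for $\mathrm{Im}\,\rho\ge 0$. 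Since the Wronskian is $x$-independent by \eqref{wron}, I am free to evaluate at any fixed $x$, which lets me apply the pointwise asymptotics of ($i_4$).

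For $A(\rho)$, I plug the expansions of $\bar F_-$ and $F_+$ into $\bar F_-' F_+ - \bar F_- F_+'$. The exponentials cancel; the $\nu=1$ contributions supply opposite leading terms $\mp i\rho I_m$ that sum to $-2i\rho I_m$, while the next order yields $-2(\omega_+(x)+\omega_-(x))$. The explicit definitions in ($i_4$) give $\omega_+(x)+\omega_-(x)=-\omega$ at every $x$, so the $x$-dependence disappears, and \eqref{wronA} produces $A(\rho)=I_m-\omega/(i\rho)+o(\rho^{-1})$. The asymptotic for $D(\rho)=A^*(-\bar\rho)$ then follows from $\omega^*=\omega$.

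For $B(\rho)$, both $\bar F_-(x,-\rho)$ and $F_+(x,\rho)$ carry the leading factor $\exp(i\rho x)$, so the Wronskian has a common overall factor $\exp(2i\rho x)$. Expanding each of the two products through order $O(1)$ shows that $\bar F_-'(x,-\rho)F_+(x,\rho)$ and $\bar F_-(x,-\rho)F_+'(x,\rho)$ agree modulo $o(1)$---both equal $\exp(2i\rho x)(i\rho I_m+\omega_+(x)-\omega_-(x))$ to that order---so their difference is $\exp(2i\rho x)\cdot o(1)$. Because the Wronskian does not depend on $x$, evaluating at $x=0$ shows that this difference is $o(1)$, and \eqref{wronB} gives $B(\rho)=o(\rho^{-1})$. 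The bound for $C(\rho)=-B^*(\rho)$ follows from \eqref{ABCDrel}.

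The principal obstacle is the double cancellation in the $B$-Wronskian: one must verify that not only the leading $O(\rho)$ terms but also the $O(1)$ terms cancel between the two products, since only after this second cancellation does one obtain the claimed $o(\rho^{-1})$ bound. The rest of the argument is straightforward bookkeeping of exponentials and adjoints, combined with the identity $\omega_+(x)+\omega_-(x)=-\omega$.
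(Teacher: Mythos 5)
Your proposal is correct and follows essentially the same route as the paper, which simply notes that the lemma "immediately follows" from \eqref{ABCDrel}, \eqref{wronA}, \eqref{wronB} and the Jost asymptotics \eqref{Jostasymptrho}; you have merely written out the bookkeeping (adjoint expansion of $\bar F_-$, the identity $\omega_+(x)+\omega_-(x)=-\omega$, and the two-level cancellation in the $B$-Wronskian) that the paper leaves implicit.
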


\begin{proof}
The assertion of the lemma immediately follows from \eqref{ABCDrel}, \eqref{wronA}, \eqref{wronB} and \eqref{Jostasymptrho}.
\end{proof}

The matrix functions
\begin{equation} \label{defS}
  	S_-(\rho) = B(\rho) (A(\rho))^{-1}, \quad S_+(\rho) = C(\rho) (D(\rho))^{-1}
\end{equation}
are called the left and the right {\it scattering matrices}, respectively.
Denote
$$
 	F_+^0(x, \rho) = F_+(x, \rho) (A(\rho))^{-1}, \quad F_-^0(x, \rho) = F_-(x, \rho) (D(\rho))^{-1}. 
$$
It follows from \eqref{AB} and \eqref{CD}, that
$$
 	F_+^0(x, \rho) = F_-(x, -\rho) + F_-(x, \rho) S_-(\rho), \quad
 	F_-^0(x, \rho) = F_+(x, -\rho) + F_+(x, \rho) S_+(\rho).		
$$
Taking the asymptotic formulas \eqref{Jostasymptx} into account, we get
\begin{align*}
F_+^0(x, \rho) \sim \exp(i \rho x) I_m + S_-(\rho) \exp(-i\rho x), \quad & x \to -\iy, \\ 
F_+^0(x, \rho) \sim \exp(i\rho x) T_+(\rho), \quad & x \to +\iy, \\
F_-^0(x, \rho) \sim \exp(-i\rho x)I_m + S_+(\rho) \exp(i \rho x), \quad & x \to +\iy, \\
F_-^0(x, \rho) \sim \exp(-i \rho x) T_-(\rho), \quad & x \to -\iy.  	
\end{align*}
Thus, the matrix functions $S_{\pm}(\rho)$ generalize the scalar {\it reflection coefficients},
and the matrix functions $T_+(\rho) = (A(\rho))^{-1}$, $T_-(\rho) = (D(\rho))^{-1}$ 
generalize the {\it transmission coefficients} (see \cite[Chapter 3]{Mar77}).

\begin{lem} \label{lem:propS}
For real $\rho \ne 0$ the matrix functions $S_{\pm}(\rho)$ are continuous and the following relations hold:
\begin{gather} \nonumber
S^*_{\pm}(\rho) = S_{\pm}(-\rho), \quad \| S_{\pm}(\rho) \| < 1, \quad S_{\pm}(\rho) = o(\rho^{-1}), \quad |\rho| \to \iy, \\ \label{SA}
S_-^*(\rho) S_-(\rho) = I_m - (A^*(\rho))^{-1} (A(\rho))^{-1}, \quad
S_+^*(\rho) S_+(\rho) = I_m - (D^*(\rho))^{-1} (D(\rho))^{-1}, \\ \label{SAlim}
\lim_{\rho \to 0} \rho (S_-(\rho) + I_m) A(\rho) = 0_m, \quad \lim_{\rho \to 0} \rho (S_+(\rho) + I_m) D(\rho) = 0_m.
\end{gather}  
\end{lem}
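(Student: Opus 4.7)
The plan is to treat the six assertions one at a time, exploiting the algebraic identities already established in Lemma~2.1 and the asymptotics from Lemma~\ref{lem:asymptABCD}. All of the work is purely algebraic except for the limits at $\rho=0$, which require one observation about the Jost solutions.

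\textbf{Continuity and symmetry.} First I would note that the identity $A^*(\rho) A(\rho) = I_m + B^*(\rho) B(\rho)$ in \eqref{ABrel} forces $A^*(\rho)A(\rho) \ge I_m$, so $A(\rho)$ is invertible for every real $\rho\neq 0$; since $A$ and $B$ are continuous there (as $\rho A$ and $\rho B$ are continuous for $\mbox{Im}\,\rho\ge 0$ and $\rho$ real respectively), $S_-(\rho) = B(\rho)A(\rho)^{-1}$ is continuous. The same argument works for $S_+$ using $D = A^*(-\bar\rho)$. For $S_{\pm}^*(\rho) = S_{\pm}(-\rho)$, replacing $\rho$ by $-\rho$ in the first identity of \eqref{ABrel} yields $B^*(\rho)A(-\rho) = A^*(\rho)B(-\rho)$; rearranging gives $(A^*(\rho))^{-1}B^*(\rho) = B(-\rho)A(-\rho)^{-1}$, which is exactly $S_-^*(\rho) = S_-(-\rho)$. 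The $S_+$ version follows from \eqref{ABCDrel}.

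\textbf{The identities \eqref{SA} and the norm bound.} Using $B^*B = A^*A - I_m$, I would compute directly
\[
S_-^*(\rho) S_-(\rho) = (A^*(\rho))^{-1} B^*(\rho)B(\rho) A(\rho)^{-1} = (A^*(\rho))^{-1}\bigl(A^*(\rho)A(\rho) - I_m\bigr)A(\rho)^{-1} = I_m - (A^*(\rho))^{-1} A(\rho)^{-1},
\]
which is \eqref{SA}; the analogous computation with $C,D$ gives the $S_+$ version via \eqref{ABCDrel}. Since $(A^*(\rho))^{-1}A(\rho)^{-1} = (A(\rho)A^*(\rho))^{-1}$ is strictly positive definite, $I_m - S_-^*S_-$ is strictly positive, hence $\|S_-(\rho)\|^2 = \|S_-^*S_-\| < 1$, and symmetrically for $S_+$.

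\textbf{Decay at infinity.} This is immediate from Lemma~\ref{lem:asymptABCD}: $A(\rho)^{-1} \to I_m$ as $|\rho|\to\infty$ on the real axis, while $B(\rho) = o(\rho^{-1})$, so the product $S_-(\rho) = B(\rho) A(\rho)^{-1}$ is $o(\rho^{-1})$; likewise for $S_+$.

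\textbf{The limit at zero.} This is the step requiring a little care — the potential obstacle is that $A(\rho)$ and $B(\rho)$ themselves may blow up like $\rho^{-1}$ at the origin. I would rewrite
\[
\rho\bigl(S_-(\rho) + I_m\bigr) A(\rho) = \rho\bigl(B(\rho) + A(\rho)\bigr)
\]
and then use \eqref{wronA} and \eqref{wronB} to get
\[
\rho\bigl(A(\rho) + B(\rho)\bigr) = \tfrac{1}{2i}\bigl\langle \bar F_-(x,-\rho) - \bar F_-(x,\rho),\, F_+(x,\rho)\bigr\rangle.
\]
Since $\bar F_-(x,\rho)$ and its $x$-derivative are continuous in $\rho$ at $\rho=0$ by property~($i_1$), the difference $\bar F_-(x,-\rho) - \bar F_-(x,\rho)$ together with its derivative vanishes as $\rho\to 0$, while $F_+(x,\rho)$ and $F_+'(x,\rho)$ remain bounded; evaluating the Wronskian (which is independent of $x$) at a fixed point thus gives the first limit in \eqref{SAlim}. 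The corresponding limit for $S_+$ is obtained by replacing the roles of $F_\pm$ (or equivalently from \eqref{ABCDrel}).
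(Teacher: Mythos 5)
Your proof is correct and follows essentially the same route as the paper: invertibility and continuity of $A$, $D$ for real $\rho\neq 0$ via \eqref{ABrel}, the same algebraic manipulations giving the symmetry and \eqref{SA} (hence $\|S_\pm(\rho)\|<1$), and Lemma~\ref{lem:asymptABCD} for the $o(\rho^{-1})$ decay. The only deviation is at \eqref{SAlim}: the paper rearranges \eqref{AB}, \eqref{CD} into $\rho F_+(x,\rho)=\bigl(F_-(x,\rho)(S_-(\rho)+I_m)+F_-(x,-\rho)-F_-(x,\rho)\bigr)\rho A(\rho)$ and lets $\rho\to 0$ (implicitly using $\det F_-(x,0)\neq 0$ at a suitable $x$), whereas you reduce to $\rho\bigl(A(\rho)+B(\rho)\bigr)$ and invoke the Wronskian formulas \eqref{wronA}, \eqref{wronB} together with continuity of the Jost solutions at $\rho=0$ --- an equivalent argument resting on the same ingredients, which moreover sidesteps the invertibility of $F_-(x,0)$.
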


\begin{proof}
In view of \eqref{ABrel}, $A^*(\rho) A(\rho) > 0$, therefore $\det A(\rho) \ne 0$, $\det D(\rho) \ne 0$
for real $\rho \ne 0$. Hence the scattering matrices $S_{\pm}(\rho)$, defined by \eqref{defS}, are continuous functions 
for real $\rho \ne 0$. By virtue of \eqref{defS} and \eqref{ABrel}, 
$$
  S_-^*(\rho) = (A^*(\rho))^{-1} B^*(\rho) = B(-\rho) (A(-\rho))^{-1} = S_-(-\rho),
$$
$$
 	I_m = (A^*(\rho))^{-1} (I_m + B^*(\rho) B(\rho)) (A(\rho))^{-1} = (A^*(\rho))^{-1} (A(\rho))^{-1} + S_-^*(\rho) S_-(\rho).
$$
Similar results are valid for $S_+(\rho)$. The estimate $\| S_{\pm}(\rho) \| < 1$ follows from \eqref{SA}.
Lemma~\ref{lem:asymptABCD} and \eqref{defS} yield $S_{\pm}(\rho) = o(\rho^{-1})$, as $|\rho| \to \iy$.

Using \eqref{AB}, \eqref{CD} and \eqref{defS}, we obtain
\begin{align*}
  	\rho F_+(x, \rho) & = (F_-(x, \rho) (S_-(\rho) + I_m) + F_-(x, -\rho) - F_-(x, \rho)) \rho A(\rho), \\
  	\rho F_-(x, \rho) & = (F_+(x, \rho) (S_+(\rho) + I_m) + F_+(x, -\rho) - F_+(x, -\rho)) \rho D(\rho).
\end{align*}
Taking limits as $\rho \to 0$, we arrive at \eqref{SAlim}.
\end{proof}

It follows from Lemma~\ref{lem:propS}, that the matrices $S_{\pm}(\rho)$ belong to $L_2((-\iy, \iy); \mathbb{C}^{m \times m})$,
so they have the Fourier transforms
\begin{equation} \label{defR}
 	R_{\pm}(x) = \frac{1}{2\pi} \int_{-\iy}^{\iy} S_{\pm}(\rho) \exp(\pm i \rho x)\, d \rho, 
\end{equation}
belonging to $L_2((-\iy, \iy); \mathbb{C}^{m \times m})$, $R_{\pm}(x) = R^*_{\pm}(x)$, and
\begin{equation} \label{inverseR}
 	S_{\pm}(\rho) = \int_{-\iy}^{\iy} R_{\pm}(x) \exp(\mp i \rho x) \, dx.
\end{equation}
 
\medskip

{\bf 2.4. Eigenvalues.} The values $\rho^2$, for which the equation \eqref{eqv} has nonzero solutions 
$Y(x) \in L_2((-\iy, \iy); \mathbb{C}^m)$, are called {\it eigenvalues} of \eqref{eqv}, and the
corresponding solutions are called {\it eigenfunctions}. 

\begin{lem} \label{lem:eigen1}
There are no eigenvalues for $\rho^2 \ge 0$.
\end{lem}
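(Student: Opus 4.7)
The plan is to rule out square-integrable eigenfunctions for real $\rho$ by expanding any candidate solution on a right half-line in the fundamental system furnished by the Jost solutions, and then using their oscillatory/nondecaying asymptotics at $+\infty$ to show that square integrability forces the expansion coefficients to be zero. Once $Y\equiv 0$ on some $(a,\iy)$, uniqueness of the Cauchy problem for \eqref{eqv} propagates this to the whole line.

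Fix real $\rho\ne 0$ and suppose $Y\in L_2((-\iy,\iy);\mathbb{C}^m)$ solves \eqref{eqv} with spectral parameter $\rho$. Property $(i_5)$ says that the columns of $F_+(x,\rho)$ and $F_+(x,-\rho)$ form a fundamental system on $(a,\iy)$, so there exist constant vectors $V_1,V_2\in\mathbb{C}^m$ with
$$Y(x)=F_+(x,\rho)V_1+F_+(x,-\rho)V_2, \qquad x>a.$$
The asymptotics $(i_2)$ with $\nu=0$ give $Y(x)=e^{i\rho x}V_1+e^{-i\rho x}V_2+o(1)$, and hence
$$\|Y(x)\|^2=\|V_1\|^2+\|V_2\|^2+2\,\mathrm{Re}\bigl(e^{-2i\rho x}(V_1,V_2)\bigr)+o(1),\qquad x\to+\iy.$$
Integrating over $(a,T)$, the oscillatory term contributes $O(1)$, the remainder $o(T)$, and the constant term $(\|V_1\|^2+\|V_2\|^2)(T-a)$. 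Finiteness of $\int_a^{\iy}\|Y\|^2\,dx$ therefore forces $V_1=V_2=0$, so $Y\equiv 0$ on $(a,\iy)$, and by uniqueness $Y\equiv 0$ on $\mathbb{R}$.

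The value $\rho=0$ has to be handled separately, since $F_+(x,\rho)$ and $F_+(x,-\rho)$ coincide there and $(i_5)$ no longer supplies a fundamental system. I would supplement $F_+(x,0)$ with a linearly independent matrix solution $G_+(x)\sim xI_m$ at $+\iy$, obtained from $F_+(x,0)$ by reduction of order and controlled via the moment condition \eqref{xQ}. Writing any solution on $(a,\iy)$ as $Y(x)=F_+(x,0)V_1+G_+(x)V_2$, the linear growth of $G_+$ is incompatible with $L_2$ unless $V_2=0$, and then $\|F_+(x,0)V_1\|\to\|V_1\|$ forces $V_1=0$ as well.

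The main obstacle is precisely this endpoint $\rho=0$: the leading-order asymptotics of $(i_2)$ become trivial and the two Jost branches collapse, so one must either produce the auxiliary linearly growing solution and control it under the hypothesis \eqref{xQ}, or run a Green's identity argument showing that a zero-mode in $L_2$ must have vanishing asymptotic constant vectors at both ends. For every other real $\rho$ the argument is a direct one-line consequence of $(i_2)$ and $(i_5)$.
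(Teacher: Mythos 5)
Your proposal is correct and follows essentially the same route as the paper: expansion of a putative eigenfunction in the Jost fundamental system $F_+(x,\pm\rho)$ for real $\rho\neq 0$, with $L_2$-integrability at $+\infty$ killing the coefficients, and for $\rho=0$ the construction of a second, linearly growing solution by reduction of order from $F_+(x,0)$ (the paper writes it explicitly as $Z(x)=E(x)\int_a^x (E^*(t)E(t))^{-1}dt$). Your averaged estimate $\int_a^T\|Y\|^2dx=(\|V_1\|^2+\|V_2\|^2)(T-a)+O(1)+o(T)$ is in fact a slightly more careful justification than the paper's appeal to $Y_0(x)\to 0$ pointwise, but the substance is the same.
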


\begin{proof}
Let $\rho_0^2 > 0$ be an eigenvalue and $Y_0(x)$ be a corresponding eigenfunction. Then $Y_0(x) = F_+(x, \rho_0) A_0 + F_+(x, -\rho_0) B_0$,
where $A_0, B_0 \in \mathbb{C}^m$. But it follows from \eqref{Jostasymptx} and the relation $\lim\limits_{x \to +\iy} Y_0(x) = 0$, that
$A_0 = B_0 = 0$, so we arrive at the contradiction.

For $\rho = 0$ equation \eqref{eqv} has the solution $E(x) := F_+(x, 0) = I_m + o(1)$ as $x \to +\iy$. There exists
a constant $a$, such that $\det E(x) \ne 0$ for $x \ge a$. One can easily check that the matrix function
$$
 	Z(x) = E(x) \int_a^x (E^*(t) E(t))^{-1} dt
$$
satisfies equation \eqref{eqv} for $\rho = 0$ and enjoys asymptotic representation $Z(x) = x(I_m + o(1))$ as $x \to +\iy$.
Thus, the columns of the matrices $E(x)$ and $Z(x)$ form a fundamental system of solutions 
for equation \eqref{eqv} for $\rho = 0$. If equation \eqref{eqv} has the zero eigenvalue, then the corresponding 
eigenfunction admits the expansion $Y_0(x) = E(x) A_0 + Z(x) B_0$, $A_0, B_0 \in \mathbb C^m$. But in view of asymptotic behavior
of $Y_0(x)$, $E(x)$ and $Z(x)$, we obtain $A_0 = B_0 = 0$. Hence, $\rho^2 = 0$ is not an eigenvalue of \eqref{eqv}.
\end{proof}

Let 
$$ 
	\Lambda_+ := \{ \la = \rho^2 \colon \mbox{Im}\,\rho > 0, \, \det A(\rho) = 0 \}.
$$
Since $A(\rho)$ is analytic function in the upper half-plane and enjoys asymptotic representation $A(\rho) = I_m + O(\rho^{-1})$
as $|\rho| \to \iy$ by Lemma~\ref{lem:asymptABCD}, the set $\Lambda_+$ is bounded and at most countable.

\begin{lem} \label{lem:eigen2}
The set of eigenvalues coincide with $\Lambda_+$. If $A(\rho_0) V_0 = 0$, $V_0 \in \mathbb{C}^m$, $V_0 \ne 0$, then
$F_+(x, \rho_0) V_0$ is a vector eigenfunction, corresponding to the eigenvalue $\rho_0^2$.
\end{lem}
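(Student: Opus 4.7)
My plan is to establish the two inclusions of the set-equality separately, both reduced to Wronskian computations after extending the identities of Section~2.2 to the upper half-plane. Concretely I would first verify, for $\mbox{Im}\,\rho>0$, the upgraded identities
$$
\mbox{(A)}\quad \langle \bar F_-(x,\rho),F_+(x,\rho)\rangle = -2i\rho A(\rho),
\qquad
\mbox{(B)}\quad \langle \bar F_-(x,\rho),F_-(x,\rho)\rangle = 0_m.
$$
For (A), fix $x_0$ and note that both sides are analytic in $\rho$ on $\{\mbox{Im}\,\rho>0\}$: the right-hand side by analyticity of $A(\rho)$; the left-hand side because $F_+(x_0,\rho)$ and $F_+'(x_0,\rho)$ are analytic by ($i_1$), while $\bar F_-(x_0,\rho) = F_-^*(x_0,-\bar\rho)$ is analytic in $\rho$ by the Schwarz-reflection observation that $\overline{f(-\bar\rho)}$ is analytic whenever $f$ is. Both sides agree on $\mathbb R\setminus\{0\}$ by \eqref{wronA}, hence agree throughout the upper half-plane. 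For (B), when $\mbox{Im}\,\rho>0$ both $\bar F_-(x,\rho)$ and $F_-(x,\rho)$ decay exponentially as $x\to-\iy$, so the constant Wronskian vanishes in the limit.

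\emph{Converse direction.} Let $\rho_0^2$ be an eigenvalue with eigenfunction $Y_0\neq 0$. Lemma~\ref{lem:eigen1} rules out $\rho_0^2 \in [0,\iy)$, so I take $\mbox{Im}\,\rho_0 > 0$. Applying ($i_3$) on both half-lines, $Y_0 = F_+(x,\rho_0)V_0 = F_-(x,\rho_0)W_0$ for some $V_0\neq 0$ and $W_0\in\mathbb C^m$. Computing $\langle \bar F_-(x,\rho_0),Y_0\rangle$ via the two representations and invoking (A) and (B) yields $-2i\rho_0 A(\rho_0)V_0 = 0$, so $A(\rho_0)V_0 = 0$ and $\rho_0^2 \in \Lambda_+$.

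\emph{Direct direction.} Given $A(\rho_0)V_0 = 0$, $V_0\neq 0$, $\mbox{Im}\,\rho_0>0$, set $Y_0(x):=F_+(x,\rho_0)V_0$. Property ($i_3$) already gives $Y_0 \in L_2((a,\iy);\mathbb C^m)$; the substantive task is decay at $-\iy$. Identity (A) delivers $\langle \bar F_-(x,\rho_0), Y_0\rangle = 0$. I would then complete the columns of $F_-(x,\rho_0)$ to a fundamental system for \eqref{eqv} at $\rho=\rho_0$ by adjoining a matrix solution $G(x,\rho_0)$ of \eqref{eqv} with $G(x,\rho_0) \sim e^{i\rho_0 x}I_m$ as $x\to-\iy$, constructed by a Volterra equation analogous to \eqref{Jostint}. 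A direct asymptotic computation at $-\iy$ gives $\langle \bar F_-(x,\rho_0),G(x,\rho_0)\rangle = -2i\rho_0 I_m$. Writing $Y_0 = F_-(x,\rho_0)W_0 + G(x,\rho_0)Z_0$ and pairing against $\bar F_-$, (B) kills the $W_0$-term while the $G$-term contributes $-2i\rho_0 Z_0$, forcing $Z_0=0$. Hence $Y_0 = F_-(x,\rho_0)W_0$, which lies in $L_2((-\iy,a);\mathbb C^m)$ by ($i_3$), completing the proof.

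The main obstacle is the analytic-continuation step for (A) and (B); once granted, both directions collapse to short Wronskian comparisons. Note that the converse requires only the decaying Jost solution $F_-$, whereas the direct construction additionally needs the complementary growing solution $G$ near $-\iy$ in order to single out the decaying component of $Y_0$.
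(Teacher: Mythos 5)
Your converse direction is exactly the paper's argument: represent the eigenfunction on both half-lines via ($i_3$), pair with $\bar F_-(x,\rho_0)$, and use the identities $\langle\bar F_-,F_-\rangle=0_m$ and \eqref{wronA} to force $A(\rho_0)V_0=0$. (Note that your identity (A) is in effect the paper's \emph{definition} of $A(\rho)$ in the upper half-plane — the continuation is declared via \eqref{wronA} — so that step is automatic rather than something to re-prove.) In the direct direction you diverge from the paper: having obtained $\langle\bar F_-(x,\rho_0),Y_0\rangle=0$, the paper does not introduce any growing solution at all; it picks $x_0$ with $\det F_-(x_0,\rho_0)\neq 0$, observes that $Y_0$ satisfies the $m$ linear conditions $Y_0'(x_0)=(\bar F_-(x_0,\rho_0))^{-1}\bar F_-'(x_0,\rho_0)Y_0(x_0)$, which by \eqref{wronF1} are also satisfied by the $m$ linearly independent columns of $F_-(x,\rho_0)$, and concludes $Y_0=F_-(x,\rho_0)U_0$ by a dimension count in the $2m$-dimensional solution space. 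That route needs no auxiliary solution and no asymptotics beyond what is already in ($i_2$).

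The one genuine weak point in your version is the construction of the complementary solution $G(x,\rho_0)\sim e^{i\rho_0x}I_m$ ``by a Volterra equation analogous to \eqref{Jostint}.'' Writing $G(x,\rho_0)=e^{i\rho_0x}H(x)$ in that equation, the kernel becomes $\bigl(1-e^{-2i\rho_0(x-t)}\bigr)/(2i\rho_0)$ with $t<x$, and $|e^{-2i\rho_0(x-t)}|=e^{2\tau_0(x-t)}$ blows up as $t\to-\iy$; the hypothesis \eqref{xQ} gives no control of $\int_{-\iy}^x e^{2\tau_0(x-t)}\|Q(t)\|\,dt$, so the successive-approximation argument for this Volterra equation does not go through in general, and the asserted asymptotics of $G$ are not justified this way. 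The idea itself is salvageable: build $G$ by reduction of order on $x\le -a$ where $\det F_-(x,\rho_0)\neq0$, e.g. $G(x)=-2i\rho_0\,F_-(x,\rho_0)\int_{x_1}^x(\bar F_-(t,\rho_0)F_-(t,\rho_0))^{-1}dt$, exactly as the paper does at $\rho=0$ in Lemma~\ref{lem:eigen1}, and then extend by the ODE; with that replacement (and a one-line check that the columns of $F_-$ and $G$ are linearly independent, which your Wronskian values give) your argument closes. Alternatively, drop $G$ altogether and use the paper's span argument, which is shorter. You should also state explicitly that $Y_0=F_+(x,\rho_0)V_0\not\equiv0$, which follows from $\det F_+(x,\rho_0)\neq0$ for large $x$ by \eqref{Jostasymptx}.
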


\begin{proof}
{\bf 1.} Let $\rho_0^2 \in \Lambda_+$. By virtue of \eqref{wronA}, there exists a vector $V_0 \in \mathbb C^m$, such that
$$
 	\langle \bar F_-(x, \rho_0), F_+(x, \rho_0) \rangle V_0 = 0.
$$
Then the vector function $Y_0(x) := F_+(x, \rho_0) V_0$, belonging to $L_2((a, +\iy); \mathbb C^m)$ for every real $a$,
satisfies the relation
$$
 	\langle \bar F_-(x, \rho_0), Y_0(x) \rangle = \bar F_-'(x, \rho_0) Y_0(x) - \bar F_-(x, \rho_0) Y'_0(x) = 0.
$$
Let $x_0$ be such that $\det F_-(x_0, \rho_0) \ne 0$ (such value exists in view of asymptotics \eqref{Jostasymptx}). Then
the solution $Y_0(x)$ satisfies $n$ linearly independent conditions:
$$
 	Y'_0(x_0) - (\bar F_-(x_0, \rho_0))^{-1} \bar F_-'(x_0, \rho_0) Y_0(x_0) = 0.
$$
The $n$ linearly independent columns of the matrix $F_-(x, \rho_0)$ satisfy the same conditions by virtue of \eqref{wronF1}.
Consequently, $Y_0(x) = F_-(x, \rho_0) U_0$, $U_0 \in \mathbb{C}^m$, so $Y_0(x) \in L_2((-\iy, a); \mathbb C^m)$.
Thus, $Y_0(x)$ is an eigenfuction, and $\rho_0^2$ is an eigenvalue of equation \eqref{eqv}.

{\bf 2.} On the contrary, let $\rho_0^2$ be an eigenvalue and $Y_0(x)$ be a corresponding eigenfunction. By virtue of
the property ($i_3$) of the Jost solutions,
$Y_0(x) = F_+(x, \rho_0) V_0 = F_-(x, \rho_0) U_0$, $V_0, U_0 \in \mathbb C^m$. On the one hand,
$$
 	\langle \bar F_-(x, \rho_0), Y_0(x) \rangle =  	\langle \bar F_-(x, \rho_0), F_-(x, \rho_0) \rangle U_0 = 0.	
$$
On the other hand,
$$
 	\langle \bar F_-(x, \rho_0), Y_0(x) \rangle =  	\langle \bar F_-(x, \rho_0), F_+(x, \rho_0) \rangle V_0 = 0.	
$$
In view of \eqref{wronA}, $\det A(\rho_0) = 0$, i.e. $\rho_0^2 \in \Lambda_+$.
\end{proof}

The operator $-Y'' + Q(x) Y$ is self-adjoint in $L_2((-\iy, \iy); \mathbb C^m)$. Taking Lemma~\ref{lem:eigen1} into account,
we conclude that eigenvalues $\rho^2$ of \eqref{eqv} are real and negative, and eigenfunctions, corresponding to different eigenvalues
$\la_k$ and $\la_n$ are orthogonal:
$$
 	\int_{-\iy}^{\iy} Y^*_k(x) Y_n(x) \, dx = 0.
$$
In view of \eqref{ABCDrel}, the eigenvalues also coincide with the zeros of $\det D(\rho)$ in the upper half-plane.

\begin{lem} \label{lem:finite}
The number of the eigenvalues is finite.
\end{lem}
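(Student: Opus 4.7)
By Lemmas~\ref{lem:eigen2} and~\ref{lem:eigen1}, together with self-adjointness of the operator $-Y''+Q(x)Y$ in $L_2((-\iy,\iy);\mathbb C^m)$, every eigenvalue has the form $\la=-\tau^2$ with $\tau>0$ and corresponds to a value $\rho_0=i\tau$ at which $\det A(\rho_0)=0$. Hence, all zeros of $\det A$ in the open upper half-plane lie on the positive imaginary semi-axis, and it suffices to show that the analytic function $\det A(\rho)$ has only finitely many zeros in $\{\mathrm{Im}\,\rho>0\}$.

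First, by Lemma~\ref{lem:asymptABCD}, $A(\rho)=I_m+O(\rho^{-1})$ uniformly for $\mathrm{Im}\,\rho\ge 0$ as $|\rho|\to\iy$, so $\det A(\rho)\to 1$ and there exists $R>0$ with $|\det A(\rho)|\ge 1/2$ for $|\rho|\ge R$ in the closed upper half-plane. Thus all zeros are confined to the bounded set $\Omega:=\{\rho:|\rho|<R,\,\mathrm{Im}\,\rho>0\}$. Second, since $\det A$ is holomorphic and not identically zero on $\{\mathrm{Im}\,\rho>0\}$, its zeros are isolated there and cannot accumulate at any interior point; any hypothetical accumulation point of infinitely many zeros inside $\Omega$ must therefore lie on its boundary. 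The far boundary arc $\{|\rho|=R\}$ is excluded by the choice of $R$, and the real boundary $\mathbb{R}\setminus\{0\}$ is excluded by \eqref{ABrel}, which yields $A^*(\rho)A(\rho)=I_m+B^*(\rho)B(\rho)\succeq I_m$ for real $\rho\neq 0$, hence $\det A(\rho)\neq 0$ there, combined with the continuity of $\det A$ on $\{\mathrm{Im}\,\rho\ge 0\}\setminus\{0\}$ noted after Lemma~2.1. Consequently, the only possible accumulation point of zeros is $\rho=0$.

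The main obstacle is therefore to rule out clustering of zeros of $\det A$ at the origin. The strategy I would follow uses the Wronskian formula \eqref{wronA}, which rewrites as
\begin{equation*}
2i\rho\, A(\rho)=-\langle \bar F_-(x,\rho),F_+(x,\rho)\rangle.
\end{equation*}
The right-hand side, evaluated at any fixed $x$, is well defined at $\rho=0$ because under the first-moment condition \eqref{xQ} the Jost solutions $F_\pm(x,\rho)$ and their derivatives $F_\pm'(x,\rho)$ extend continuously to $\rho=0$ in the closed upper half-plane via the integral equations \eqref{Jostint}. Hence $\rho A(\rho)$ extends continuously to $\rho=0$; equivalently, $A(\rho)$ is meromorphic at the origin with at most a simple pole. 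In the generic case $\lim_{\rho\to 0}\rho A(\rho)$ is invertible, so $|\det A(\rho)|\to\iy$ near zero and no accumulation occurs. In the exceptional case where the limit is singular, I would expand $\rho A(\rho)=M_0+\rho M_1+o(\rho)$ along the imaginary axis, argue via \eqref{ABrel} (which remains an identity after multiplication by $\rho\bar\rho$) that $\det(\rho A(\rho))$ cannot vanish identically near $0$, and thereby confine the zeros of $\det A$ in a neighborhood of the origin to a finite set, cf.\ the scalar argument in \cite{Mar77}.

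Combining the three steps, the zeros of $\det A(\rho)$ in $\{\mathrm{Im}\,\rho>0\}$ form a finite subset of $\Omega$, and the count of eigenvalues (each with multiplicity at most $m=\dim\ker A(\rho_0)$) is finite.
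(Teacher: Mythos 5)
Your reduction to counting zeros of $\det A(\rho)$ is fine as far as it goes: by Lemma~\ref{lem:eigen2} and self-adjointness the zeros lie on the positive imaginary semi-axis, the asymptotics of Lemma~\ref{lem:asymptABCD} confine them to a bounded set, and the identity theorem excludes interior accumulation points. But the whole weight of the lemma then rests on the one step you leave as a sketch, namely ruling out accumulation at $\rho=0$, and your argument for the exceptional case does not close. Knowing that $\rho A(\rho)$ extends continuously to $\rho=0$ and that $\det(\rho A(\rho))$ does not vanish identically near $0$ is not enough: a function analytic in the open upper half-plane and continuous up to the boundary can perfectly well have infinitely many zeros $i\tau_k\to 0$ without vanishing identically (this is exactly the boundary-accumulation phenomenon your interior argument cannot reach). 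What you actually need is a quantitative statement such as $\|(A(\rho))^{-1}\|=O(1)$ as $\rho\to 0$, i.e.\ the estimate \eqref{estA0} of Section~2.6 — but in the paper that estimate is proved \emph{after} and \emph{by means of} Lemma~\ref{lem:finite} (its proof sets $\tau^*=\tfrac12\min_k\tau_k$ and bounds the number of eigenvalues $N^{(r)}$ of the truncated potentials by the same argument as in Lemma~\ref{lem:finite}), so invoking it here would be circular. A genuine small-energy analysis in the exceptional case (in the spirit of \cite{AK01}) could in principle complete your route, but the proposal as written does not supply it; the expansion $\rho A(\rho)=M_0+\rho M_1+o(\rho)$ and the appeal to \eqref{ABrel} is a plan, not a proof, and \eqref{ABrel} only controls $A$ on the real axis, not along the imaginary axis where the zeros sit.

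The paper avoids this difficulty entirely by a different, more elementary mechanism: it argues by contradiction with the orthogonality of eigenfunctions. Assuming infinitely many eigenvalues $\rho_k=i\tau_k$ with eigenfunctions $Y_k=F_+(x,\rho_k)V_k=F_-(x,\rho_k)U_k$, $\|V_k\|=\|U_k\|=1$, it splits $\int_{-\iy}^{\iy}Y_k^*Y_n\,dx$ into tail integrals over $(a,\iy)$ and $(-\iy,-a)$ plus two middle terms, uses the uniform asymptotics \eqref{Jostasymptx} and a convergent subsequence of the unit vectors $V_k$ to get a lower bound $\exp(-2aT)/(4T)$ for each tail, and shows the remaining terms are nonnegative or tend to zero; this contradicts orthogonality. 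That argument needs no information about $A(\rho)$ near $\rho=0$ at all, which is precisely why the paper can postpone the delicate $\rho\to 0$ analysis to Section~2.6. If you want to keep your complex-analytic framing, you must either import an independent proof of the boundedness of $(A(\rho))^{-1}$ near zero under \eqref{xQ}, or switch to an orthogonality/compactness argument of the paper's type for the cluster of eigenvalues $\tau_k\to 0$.
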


\begin{proof}
Prove the assertion by contradiction. Suppose there is an infinite sequence $\{ \rho_k^2 \}_{k = 1}^{\iy}$ of negative eigenvalues,
and $\{ Y_k(x) \}_{k = 1}^{\iy}$ is an orthogonal sequence of corresponding vector eigenfunctions. Note that there can be multiple eigenvalues,
they occur in the sequence $\{ \rho_k^2 \}_{k = 1}^{\iy}$ multiple times with different eigenfunctions $Y_k(x)$.
The eigenfunctions can be represented in the form
$$
 	Y_k(x) = F_+(x, \rho_k) V_k = F_-(x, \rho_k) U_k, \quad \| V_k \| = \| U_k \| = 1,
$$	
Since $\rho_k = i \tau_k$, $\tau_k > 0$, and \eqref{barF} holds, we get
$$
  	Y_k^*(x) = V_k^* \bar F_+(x, \rho_k) = U_k^* \bar F_-(x, \rho_k).  	
$$

Using the orthogonality of the eigenfunctions, we obtain for $k \ne n$
\begin{multline} \label{ortYkn}
0 = \int_{-\iy}^{\iy} Y_k^*(x) Y_n(x) \, dx = V_k^* \int_a^{\iy} \bar F_+(x, \rho_k) F_+(x, \rho_n) \, dx \, V_n + 
U_k^* \int_{-\iy}^{-a} \bar F_-(x, \rho_k) F_-(x, \rho_n) \, dx \, U_n \\ + \int_{-a}^a Y_k^*(x) Y_k(x) \, dx + 
\int_{-a}^a Y_k^*(x) (Y_n(x) - Y_k(x)) \, dx =: \mathcal I_1 + \mathcal I_2 + \mathcal I_3 + \mathcal I_4.
\end{multline}
It follows from \eqref{Jostasymptx}, that 
$$
 	F_+(x, \rho_n) = \exp(-\tau_n x) (I_m + \al_n(x)), \quad \bar F_+(x, \rho_k) = \exp(-\tau_k x) (I_m + \al^*_k(x)),
$$
where $\| \al_k(x) \| \le \frac{1}{8}$ as $x \ge a$ for all $k \ge 1$ and for sufficiently large $a$. Therefore
\begin{multline*}
\mathcal{I}_1 = V_k^* \int_a^{\iy}  \exp(-(\tau_k + \tau_n)x)(I_m + \beta_{kn}(x)) \, dx \, V_k \\
+ V_k^* \int_a^{\iy} \exp(-(\tau_k + \tau_n)x)(I_m + \beta_{kn}(x)) \, dx (V_n - V_k). 
\end{multline*}	
Since the vectors $V_k$ belong to the unit sphere, one can choose a convergent subsequence 
$\{ V_{k_s} \}_{s = 1}^{\iy}$. Further we consider $V_k$ and $V_n$ from such subsequence.
Then for sufficiently large $k$ and $n$ we have
\begin{multline*}
\left| V_k^* \int_a^{\iy} \exp(-(\tau_k + \tau_n)x)(I_m + \beta_{kn}(x)) \, dx (V_n - V_k) \right| \\
\le \frac{3 \exp(-(\tau_k + \tau_n)a)}{2 (\tau_k + \tau_n)} \| V_n - V_k \| \le
\frac{\exp(-(\tau_k + \tau_n)a)}{8 (\tau_k + \tau_n)}.
\end{multline*}
Hence 
$$
\mathcal{I}_1 \ge \frac{\exp(-(\tau_k + \tau_n)a)}{2 (\tau_k + \tau_n)} \ge \frac{\exp(-2 a T)}{4 T}, \quad T := \max_k \tau_k.  	
$$
Similar estimate is valid for $\mathcal I_2$. Obviously, $\mathcal I_3 \ge 0$. Using the arguments, similar to the proof 
of \cite[Theorem 2.3.4]{FY01}, one can show that $\mathcal I_4 \to 0$ as $k, n \to \iy$. Thus, for sufficiently large 
$k$ and $n$, $\mathcal I_1 + \mathcal I_2 + \mathcal I_3 + \mathcal I_4 > 0$, that contradicts \eqref{ortYkn}.
Hence, the number of the eigenvalues is finite.
\end{proof}

Further we denote the set of eigenvalues by $\{ \rho_k^2 \}_{k = 1}^N$, $\rho_k = i \tau_k$, $\tau_k > 0$.

\medskip

{\bf 2.5 Scattering data}. 

\begin{lem} \label{lem:poles}
The poles of the matrix function $(A(\rho))^{-1}$ in the upper half-plane $\mbox{Im}\, \rho > 0$ 
are simple.
\end{lem}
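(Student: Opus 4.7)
The plan is to reformulate the claim as follows: the pole of $(A(\rho))^{-1}$ at $\rho_0 = i\tau_0$, $\tau_0 > 0$, is simple if and only if, for every nonzero $V_0 \in \ker A(\rho_0)$, the vector $\dot A(\rho_0) V_0$ (where a dot denotes $d/d\rho$) does not lie in the range $A(\rho_0) \mathbb{C}^m = (\ker A^*(\rho_0))^{\perp}$; equivalently, no Jordan chain of $A(\rho)$ at $\rho_0$ can be extended past length one. Given $V_0 \in \ker A(\rho_0)$, by Lemma~\ref{lem:eigen2} the vector $Y_0(x) := F_+(x, \rho_0) V_0$ is an eigenfunction, and by property $(i_3)$ we also have $Y_0(x) = F_-(x, \rho_0) U_0$ for a unique $U_0 \in \mathbb{C}^m$. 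Since $D(\rho) = A^*(-\bar\rho)$ (as noted after Lemma~\ref{lem:asymptABCD}) and $-\bar\rho_0 = \rho_0$, we get $A^*(\rho_0) = D(\rho_0)$. Combined with the analogue of Lemma~\ref{lem:eigen2} for $D(\rho)$ (mentioned at the end of Section~2.4), this yields $U_0 \in \ker A^*(\rho_0)$, so $U_0^*\dot A(\rho_0) V_0$ is the natural bilinear pairing to examine.

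The computation of $U_0^* \dot A(\rho_0) V_0$ proceeds via a differentiated Green's identity. Differentiating equation \eqref{eqv} with $Y = F_+(x, \rho) V_0$ in $\rho$ and combining with equation \eqref{eqv*} satisfied by $\bar F_-(x, \rho_0)$ yields
\begin{equation*}
 \frac{d}{dx}\langle \bar F_-(x, \rho_0),\, \dot F_+(x, \rho_0) V_0\rangle = 2\rho_0\, \bar F_-(x, \rho_0) F_+(x, \rho_0) V_0.
\end{equation*}
Left-multiplying by $U_0^*$, using the identity $U_0^* \bar F_-(x, \rho_0) = Y_0^*(x)$, and integrating over $[a, b]$ gives
\begin{equation*}
 U_0^* \langle \bar F_-(x, \rho_0),\, \dot F_+(x, \rho_0) V_0\rangle\Big|_a^b = 2\rho_0 \int_a^b \|Y_0(x)\|^2\, dx.
\end{equation*}
As $b \to +\iy$ the boundary term vanishes because both $Y_0^*$ and $\dot F_+(x, \rho_0) V_0$ decay exponentially. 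As $a \to -\iy$, the asymptotic $\dot F_+(x, \rho_0) V_0 \sim e^{i\rho_0 x} \dot A(\rho_0) V_0$, combined with $U_0^* \bar F_-(x, \rho_0) \sim e^{\tau_0 x} U_0^*$ (from $Y_0^*(x) \sim e^{\tau_0 x} U_0^*$), produces the finite limit $2\tau_0 U_0^* \dot A(\rho_0) V_0$ on the boundary. Solving gives $U_0^* \dot A(\rho_0) V_0 = -i \|Y_0\|_{L_2}^2 \neq 0$, which establishes the simple pole.

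The main obstacle is identifying the leading growing-exponential coefficient of $\dot F_+(x, \rho_0) V_0$ at $-\iy$ as precisely $\dot A(\rho_0) V_0$. The strategy is to decompose $F_+(x, \rho) V_0 = G_-(x, \rho) A(\rho) V_0 + F_-(x, \rho) \beta(\rho)$ in a fundamental system at $-\iy$, where $G_-(x, \rho) \sim e^{i\rho x} I_m$ is constructed for $\rho$ in a neighborhood of $\rho_0$ via a Volterra integral equation. Differentiating at $\rho = \rho_0$ and using $A(\rho_0) V_0 = 0$ and $\beta(\rho_0) = U_0$, the dominant term becomes $G_-(x, \rho_0) \dot A(\rho_0) V_0 \sim e^{i\rho_0 x} \dot A(\rho_0) V_0$, the remaining contributions being of order $O(x e^{\tau_0 x})$ at $-\iy$.
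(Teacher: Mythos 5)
Your proposal rests on exactly the same core fact as the paper's proof: for $V_0 \in \ker A(\rho_0)$ and the vector $U_0$ with $F_+(x,\rho_0)V_0 = F_-(x,\rho_0)U_0$, one shows $U_0^* A(\rho_0) = 0$ and $U_0^*\frac{d}{d\rho}A(\rho_0)V_0 = \pm i\|Y_0\|^2_{L_2} \ne 0$, which excludes a Jordan chain of length two and hence a non-simple pole; your chain/range criterion is a restatement of Lemma~\ref{lem:AM} cited from \cite{AM60}, and your identity $\frac{d}{dx}\langle \bar F_-(x,\rho_0), \frac{d}{d\rho}F_+(x,\rho_0)\rangle = 2\rho_0 \bar F_-(x,\rho_0)F_+(x,\rho_0)$ is the same Green-type identity the paper integrates. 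Where you genuinely diverge is in how $\frac{d}{d\rho}A(\rho_0)$ enters the computation. The paper differentiates the Wronskian representation \eqref{wronA} of $A(\rho)$ itself, sandwiches between $U_k^*$ and $V_k$, and kills both boundary terms using \eqref{Jostasymptx} together with the bounds \eqref{asymptFderiv} obtained from the transformation operators \eqref{transform}; no second solution is ever needed. You instead read $\frac{d}{d\rho}A(\rho_0)V_0$ off as the coefficient of the growing exponential of $\frac{d}{d\rho}F_+(x,\rho_0)V_0$ at $-\infty$, which forces you to introduce a dominant solution $G_-(x,\rho)\sim e^{i\rho x}I_m$ near $-\infty$, differentiable in $\rho$, and also to justify that the coefficient of $G_-$ in the expansion of $F_+$ is precisely the analytic continuation $A(\rho)$ of \eqref{wronA} (this does follow from $\langle \bar F_-, G_-\rangle = -2i\rho I_m$ and the extension of \eqref{wronF1} to $\mbox{Im}\,\rho>0$, but it is a step you should state explicitly). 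The final sign you obtain ($-i\|Y_0\|^2$ versus the paper's $+i\|Y_0\|^2$) is immaterial, since only nonvanishing is used.

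The one soft spot is the assertion that $G_-$ is ``constructed via a Volterra integral equation.'' The naive Volterra equation from $-\infty$, namely $G_-(x,\rho) = e^{i\rho x}I_m + \int_{-\infty}^x \frac{\sin\rho(x-t)}{\rho}Q(t)G_-(t,\rho)\,dt$, does not converge under \eqref{xQ} alone: for $\mbox{Im}\,\rho = \tau > 0$ the part $e^{-i\rho(x-t)}$ of the kernel, acting on a solution of size $e^{-\tau t}$, produces the weight $e^{-2\tau t}\|Q(t)\|$ on $(-\infty, x)$, which is integrable only for exponentially decaying potentials. The construction can be repaired in a standard way --- use a mixed (anchored) integral equation on $(-\infty, c]$ in which the dangerous exponential is integrated from a finite anchor $c$ (with $c$ so negative that $\int_{-\infty}^c\|Q\|\,dt$ is small, successive approximations converge and $\rho$-analyticity follows), or a reduction-of-order formula built from $F_-$ in the spirit of the solution $Z(x)$ in Lemma~\ref{lem:eigen1} --- but as written the step is not justified for potentials satisfying only \eqref{xQ}. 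Alternatively, you can avoid $G_-$ entirely by differentiating \eqref{wronA} directly and disposing of the boundary terms after sandwiching, as the paper does; that is precisely what makes the paper's route lighter.
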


We will prove Lemma~\ref{lem:poles}, using the following well-known fact (see \cite[Lemma 2.2.1]{AM60}).

\begin{lem} \label{lem:AM}
If there do not exist two nonzero vectors $a$ and $b$, such that
\begin{equation} \label{lemAM}
\begin{array}{c}
A(\rho) a=0, \\
\frac{d}{d\rho} A(\rho) a+ A(\rho) b=0,
\end{array}
\end{equation}
at some point $\rho_0$, then $\rho_0$ is a simple pole of $(A(\rho))^{-1}$.
\end{lem}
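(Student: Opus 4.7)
By Lemma~\ref{lem:AM}, it suffices to show that at every zero $\rho_0$ of $\det A(\rho)$ in $\{\mbox{Im}\,\rho > 0\}$, no pair of vectors $(a,b)$ with $a \ne 0$ satisfies~\eqref{lemAM}. By Lemma~\ref{lem:eigen2}, such $\rho_0$ are precisely those for which $\rho_0^2$ is an eigenvalue of~\eqref{eqv}; since the eigenvalues are real and negative (Lemma~\ref{lem:eigen1} together with self-adjointness of $-\partial_x^2+Q$ on $L_2((-\iy,\iy);\mathbb{C}^m)$), one has $\rho_0 = i\tau_0$ with $\tau_0 > 0$. Arguing by contradiction, assume such $a \ne 0$ and $b$ exist. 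By Lemma~\ref{lem:eigen2}, $f(x) := F_+(x, \rho_0)\,a$ is an eigenfunction, so $f \in L_2((-\iy,\iy);\mathbb{C}^m)$; moreover, $f(x) = F_-(x, \rho_0)\,U_0$ for some $U_0 \in \mathbb{C}^m$, and $f$ decays exponentially as $x \to \pm\iy$.

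The key construction is
$$
Y(x) \;:=\; \dot F_+(x, \rho_0)\,a \;+\; F_+(x, \rho_0)\,b,
$$
where the dot denotes $\partial/\partial\rho$. Differentiating equation~\eqref{eqv} for $F_+(x,\rho)$ in $\rho$ and setting $\rho=\rho_0$ yields $(L-\rho_0^2)\,Y = 2\rho_0\,f$, where $L := -\partial_x^2+Q$. Once $Y$ is shown to lie in $L_2((-\iy,\iy);\mathbb{C}^m)$, self-adjointness of $L$ together with $\rho_0^2 \in \mathbb{R}$ permits integration by parts (with boundary contributions at $\pm\iy$ vanishing by the exponential decay of $f$ and $Y$), giving
$$
2\rho_0\,\|f\|_{L_2}^2 \;=\; \int_{-\iy}^{\iy} f^*(L-\rho_0^2) Y\,dx \;=\; \int_{-\iy}^{\iy}\bigl((L-\rho_0^2) f\bigr)^* Y\,dx \;=\; 0,
$$
so $f \equiv 0$. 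But then the asymptotic $f(x) \sim a\,e^{i\rho_0 x}$ at $+\iy$ from~\eqref{Jostasymptx} forces $a = 0$, the desired contradiction.

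It remains to verify $Y \in L_2$. At $x\to +\iy$, property~\eqref{Jostasymptx} gives $F_+(x,\rho_0) \sim e^{i\rho_0 x} I_m$, and differentiating the Volterra equation~\eqref{Jostint} in $\rho$ gives $\dot F_+(x,\rho_0) \sim ix\,e^{i\rho_0 x} I_m$, whence $Y(x) \sim e^{-\tau_0 x}(ix\,a + b)$ decays. The direction $x \to -\iy$ is the main obstacle, since~\eqref{AB} holds only for real $\rho$ and $B(\rho)$ does not admit analytic continuation to the upper half-plane, making direct asymptotics of $F_+$ and $\dot F_+$ at $-\iy$ awkward. I work instead via the analytic identity~\eqref{wronA}, $-2i\rho A(\rho) = \langle \bar F_-(x,\rho), F_+(x,\rho)\rangle$. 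Differentiating it in $\rho$, applying it to the vector $a$ at $\rho = \rho_0$, and adding the identity at $\rho_0$ applied to $b$, namely $\langle \bar F_-(x,\rho_0), F_+(x,\rho_0) b\rangle = -2i\rho_0 A(\rho_0) b$, the two hypotheses in~\eqref{lemAM} combine to yield, for every $x$,
$$
\langle \bar F_-(x, \rho_0),\,Y(x)\rangle \;=\; -\,\langle \dot{\bar F}_-(x, \rho_0),\,f(x)\rangle.
$$
Using the asymptotics $\bar F_-(x,\rho_0) \sim e^{\tau_0 x} I_m$, $\dot{\bar F}_-(x,\rho_0) \sim -ix\,e^{\tau_0 x} I_m$, and $f(x) \sim e^{\tau_0 x} U_0$ at $-\iy$, the right-hand side is $O(e^{2\tau_0 x})$; writing the standard form $Y(x) = e^{-\tau_0 x} c + e^{\tau_0 x} d + \ldots$ at $-\iy$ for solutions of $(L-\rho_0^2)Y = 2\rho_0 f$ (the absence of polynomial prefactors follows from $\rho_0^2$ being bounded away from the essential spectrum) and computing the left-hand side to leading order produces $2\tau_0 c + o(1)$, forcing $c = 0$. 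Hence $Y(x) = O(e^{\tau_0 x})$ at $-\iy$ and $Y \in L_2$, completing the argument.
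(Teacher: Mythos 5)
Your proposal does not address the statement you were asked to prove. Lemma~\ref{lem:AM} is the abstract criterion itself: a statement about an analytic matrix-valued function $A(\rho)$ near a point $\rho_0$, asserting that the absence of a pair $(a,b)$ satisfying \eqref{lemAM} forces the pole of $(A(\rho))^{-1}$ at $\rho_0$ to be simple. Your very first sentence invokes Lemma~\ref{lem:AM} as a known fact and then sets out to verify its hypothesis for the particular $A(\rho)$ of the scattering problem --- that is, you have written a proof of Lemma~\ref{lem:poles} (simplicity of the poles of $(A(\rho))^{-1}$), not of Lemma~\ref{lem:AM}. Relative to the stated target the argument is circular: the conclusion is assumed in the first line, and nothing in the rest of the text establishes it.

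A proof of Lemma~\ref{lem:AM} is a purely local, function-theoretic argument with no input from the differential equation. Suppose $(A(\rho))^{-1}$ has a pole of order $p \ge 2$ at $\rho_0$; write the Laurent expansion $(A(\rho))^{-1} = \sum_{k \ge -p} C_k (\rho - \rho_0)^k$ with $C_{-p} \ne 0_m$, expand $A(\rho) = A(\rho_0) + \frac{d}{d\rho}A(\rho_0)(\rho - \rho_0) + \dots$, and compare the coefficients of $(\rho-\rho_0)^{-p}$ and $(\rho-\rho_0)^{-p+1}$ in the identity $A(\rho)(A(\rho))^{-1} = I_m$. Choosing $v$ with $a := C_{-p} v \ne 0$ and $b := C_{-p+1} v$ yields exactly a pair satisfying \eqref{lemAM}, contradicting the hypothesis (the degenerate case $b = 0$ is handled by replacing $b$ with any nonzero element of $\ker A(\rho_0)$, e.g.\ $a$ itself). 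Note that the paper does not prove this lemma either; it cites it as \cite[Lemma 2.2.1]{AM60}. Your analytic construction --- forming $Y = \frac{d}{d\rho}F_+(x,\rho_0)a + F_+(x,\rho_0)b$, deriving $(L-\rho_0^2)Y = 2\rho_0 f$, and forcing $f \equiv 0$ via self-adjointness --- is a plausible alternative route to Lemma~\ref{lem:poles}, which the paper instead obtains from the Wronskian formula \eqref{Aprime} and the inequality $U_k^* \frac{d}{d\rho}A(\rho_k)V_k \ne 0$; but it cannot stand as a proof of Lemma~\ref{lem:AM}.
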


\begin{proof}[Proof of Lemma~\ref{lem:poles}]
Using \eqref{wronA}, calculate the derivative
\begin{multline*}
 	\frac{d}{d\rho} A(\rho) = -\frac{d}{d\rho} \left( \frac{1}{2 i \rho} \langle \bar F_-(x, \rho), F_+(x, \rho) \rangle \right) \\ =
 	-\frac{1}{2 i \rho} \left( -\frac{1}{\rho} \langle \bar F_-(x, \rho), F_+(x, \rho) \rangle +
 	\langle \tfrac{d}{d\rho} \bar F_-(x, \rho), F_+(x, \rho) \rangle +
 	\langle \bar F_-(x, \rho), \tfrac{d}{d\rho} F_+(x, \rho) \rangle \right).			 
\end{multline*}
Differentiate equation \eqref{eqv*} for $\bar F_-(x, \rho)$ by $\rho$:
\begin{equation} \label{eqvdrho}
-\frac{d}{d\rho} \bar F_-''(x, \rho) + \frac{d}{d\rho} \bar F_-(x, \rho) Q(x) = 2 \rho \bar F_-(x, \rho) + \rho^2 \frac{d}{d\rho} \bar F_-(x, \rho).	
\end{equation}
Using \eqref{eqv} and \eqref{eqvdrho}, one easily obtains
$$
  \frac{d}{dx} \langle \dfrac{d}{d\rho}\bar F_-(x, \rho), F_+(x, \rho) \rangle = -2 \rho \bar F_-(x, \rho) F_+(x, \rho). 
$$
Hence
$$
\langle \tfrac{d}{d\rho}\bar F_-(x, \rho), F_+(x, \rho) \rangle = \langle \tfrac{d}{d\rho}\bar F_-(x, \rho), F_+(x, \rho) \rangle_{x = -\iy}
- 2 \rho \int_{-\iy}^x \bar F_-(t, \rho) F_+(t, \rho)\, dt.  
$$
Similarly
$$
  \langle \bar F_-(x, \rho), \tfrac{d}{d\rho} F_+(x, \rho) \rangle = \langle \bar F_-(x, \rho), \tfrac{d}{d\rho} F_+(x, \rho) \rangle_{x = +\iy}
  + 2 \rho \int_x^{\iy} \bar F_-(t, \rho) F_+(t, \rho) \, dt. 
$$
Finally, we get
\begin{multline} \label{Aprime}
\frac{d}{d\rho} A(\rho) = -\frac{1}{\rho} A(\rho) - \frac{1}{2 i \rho} \Bigl(\langle \tfrac{d}{d\rho}\bar F_-(x, \rho), F_+(x, \rho) \rangle_{x = -\iy} \\ +
\langle \bar F_-(x, \rho), \tfrac{d}{d\rho} F_+(x, \rho) \rangle_{x = +\iy} + 2 \rho \int_{-\iy}^{\iy} \bar F_-(x, \rho) F_+(x, \rho) \, dx \Bigr). 
\end{multline}

By Lemma~\ref{lem:eigen2} the poles of $(A(\rho))^{-1}$ coincide with the eigenvalues of \eqref{eqv}.
Let $\rho_k$ be one of the poles. Then $A(\rho_k) V_k = 0$ if and only if $F_+(x, \rho_k) V_k$ is a vector eigenfunction,
corresponding to the eigenvalue $\rho_k^2$.
Note that 
\begin{equation} \label{sym1}
	F_+(x, \rho_k) V_k = F_-(x, \rho_k) U_k, \quad V_k^* \bar F_+(x, \rho_k) = U_k^* \bar F_-(x, \rho_k), \quad U_k \in \mathbb{C}^m.
\end{equation}

Using \eqref{Aprime}, \eqref{sym1} and \eqref{barF}, we derive
\begin{multline*}
U_k^* \frac{d}{d\rho} A(\rho_k) V_k = -\frac{1}{2i\rho_k} U_k^* \langle \tfrac{d}{d\rho}\bar F_-(x, \rho_k), F_-(x, \rho_k) \rangle_{x = -\iy} \, U_k 
\\ -\frac{1}{2i\rho_k} V_k^* \langle \bar F_+(x, \rho_k), \tfrac{d}{d\rho}F_+(x, \rho_k) \rangle_{x = +\iy} V_k +
i U_k^* \int_{-\iy}^{\iy} F_-^*(x, \rho_k) F_-(x, \rho_k) \, dx \, U_k. 
\end{multline*}
Similarly to the scalar case (see \cite[Theorem 3.4.1]{FY01}), using integral tramsforms \eqref{transform}, 
one can show that for $\nu = 0, 1$
\begin{equation} \label{asymptFderiv}
\left\{ 
    \begin{array}{l}
 	\dfrac{d}{d\rho} \bar F_-^{(\nu)}(x, \rho_k) = O(1), \quad x \to -\iy, \\ [10pt]
 	\dfrac{d}{d\rho} F_+^{(\nu)}(x, \rho_k) = O(1), \quad x \to +\iy.
 	\end{array}
\right.
\end{equation}
Applying these estimates together with \eqref{Jostasymptx}, we obtain
$$
\langle \tfrac{d}{d\rho}\bar F_-(x, \rho_k), F_-(x, \rho_k) \rangle_{x = -\iy} = 0_m, \quad
\langle \bar F_+(x, \rho_k), \tfrac{d}{d\rho}F_+(x, \rho_k) \rangle_{x = +\iy} = 0_m.
$$
Consequently,
$$
U_k^* \frac{d}{d\rho} A(\rho_k) V_k = i U_k^* \int_{-\iy}^{\iy} F_-^*(x, \rho_k) F_-(x, \rho_k) \, dx \, U_k \ne 0.
$$
Using \eqref{wronA}, \eqref{sym1} and \eqref{wronF1}, we derive
$$
U_k^* A(\rho_k) = -\frac{1}{2 i \rho_k} U_k^* \langle \bar F_-(x, \rho_k), F_+(x, \rho_k) \rangle =
-\frac{1}{2 i \rho_k} V_k^* \langle \bar F_+(x, \rho_k), F_+(x, \rho_k) \rangle = 0.
$$

Let $b \ne 0$ be some vector. Then
$$
 	U_k^* \frac{d}{d\rho} A(\rho_k) V_k + U_k^* A(\rho_k) b \ne 0,
$$
so \eqref{lemAM} can not hold for nonzero $a$ and $b$. By Lemma~\ref{lem:AM}, $\rho_k$
is a simple pole of $(A(\rho))^{-1}$.
\end{proof}

Denote
\begin{equation} \label{defRes}
 	R_k^- = \Res_{\rho = \rho_k} (A(\rho))^{-1} = \lim_{\rho \to \rho_k} (\rho - \rho_k) (A(\rho))^{-1}, \quad
 	R_k^+ = \Res_{\rho = \rho_k} (D(\rho))^{-1} = \lim_{\rho \to \rho_k} (\rho - \rho_k) (D(\rho))^{-1}.
\end{equation}
It follows from \eqref{defRes} and \eqref{ABCDrel}, that $R_k^- = -(R_k^+)^*$.

Clearly, 
$$
   A(\rho_k) R_k^- = \lim_{\rho \to \rho_k} (\rho - \rho_k) A(\rho) (A(\rho))^{-1} = 0_m.
$$
By Lemma~\ref{lem:eigen2}, $F_+(x, \rho_k) R_k^-$ is an eigenfunction, so it can be represented in the form
\begin{equation} \label{weight-}
F_+(x, \rho_k) R_k^- = i F_-(x, \rho_k) N_k^-.
\end{equation} 
Similarly
\begin{equation} \label{weight+}
F_-(x, \rho_k) R_k^+ = i F_+(x, \rho_k) N_k^+.
\end{equation}
We call the matrices $N_k^-$ and $N_k^+$ the left and the right {\it weight matrices}, respectively.

\begin{lem} \label{lem:Nk}
The weight matrices have the following properties:
\begin{equation} \label{rankNk}
 	\rank N_k^+ = \rank R_k^+ = \rank R_k^- = \rank N_k^-,
\end{equation}
\begin{equation} \label{Nk*}
 	N_k^{\pm} = (N_k^{\pm})^* \ge 0.
\end{equation}
\end{lem}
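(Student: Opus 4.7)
The paragraph preceding the lemma already records $R_k^- = -(R_k^+)^*$, so $\rank R_k^- = \rank R_k^+$ at once. For the outer two equalities, observe that the linear maps $V \mapsto F_{\pm}(\cdot, \rho_k) V$ from $\mathbb C^m$ into the space of vector solutions of \eqref{eqv} are injective: by property ($i_2$), $F_+(x, \rho_k) V \sim \exp(i \rho_k x) V$ as $x \to +\iy$, so $F_+(\cdot, \rho_k) V \equiv 0$ forces $V = 0$, and the $F_-$ statement is symmetric. Reading \eqref{weight-} as an identity of $m \times m$ matrix-valued functions of $x$, its columns span a finite-dimensional subspace of the $\rho_k^2$-eigenspace, of dimension $\rank R_k^-$ on the left and $\rank N_k^-$ on the right, so the two ranks coincide. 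The same argument applied to \eqref{weight+} yields $\rank R_k^+ = \rank N_k^+$.

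\textbf{Plan for self-adjointness and semi-definiteness.} The idea is to represent $N_k^-$ through a Hermitian, definite quadratic form coming from the $L^2$ inner product of eigenfunctions. Taking the conjugate transpose of \eqref{weight-} at $\rho_k$, using \eqref{barF} (note $-\overline{\rho_k} = \rho_k$, so $F_{\pm}^*(x, \rho_k) = \bar F_{\pm}(x, \rho_k)$) together with $(R_k^-)^* = -R_k^+$, gives
\begin{equation*}
    R_k^+ \bar F_+(x, \rho_k) = i (N_k^-)^* \bar F_-(x, \rho_k).
\end{equation*}
Polarizing the identity $U_k^* A'(\rho_k) V_k = i \|Y_k\|_{L^2}^2$ derived in the proof of Lemma~\ref{lem:poles}, one obtains, for any eigenfunction pair $F_+(\cdot, \rho_k) V_j = F_-(\cdot, \rho_k) U_j =: Y_j$ ($j=1,2$),
\begin{equation*}
    U_1^* A'(\rho_k) V_2 = i \langle Y_1, Y_2 \rangle_{L^2}.
\end{equation*}
Substituting $V_j = R_k^- e_j$ and $U_j = i N_k^- e_j$ as read off \eqref{weight-} yields
\begin{equation*}
    e_1^* \bigl( (N_k^-)^* A'(\rho_k) R_k^-\bigr) e_2 = -\langle Y_1, Y_2 \rangle_{L^2},
\end{equation*}
so the matrix $(N_k^-)^* A'(\rho_k) R_k^-$ is Hermitian of definite sign. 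Finally, expanding $A(\rho) (A(\rho))^{-1} = I = (A(\rho))^{-1} A(\rho)$ in Laurent series at $\rho_k$ and matching coefficients gives the identity $R_k^- A'(\rho_k) R_k^- = R_k^-$; combined with the structural fact that $R_k^-$ restricts to an isomorphism from $\ker A^*(\rho_k)$ onto $\ker A(\rho_k)$ (a consequence of the simple-pole property of Lemma~\ref{lem:poles}), this allows the factor $A'(\rho_k) R_k^-$ to be cancelled and yields both $(N_k^-)^* = N_k^-$ and the claimed definiteness of $N_k^-$. The analogous statement for $N_k^+$ follows by the parallel argument applied to \eqref{weight+} and the Laurent expansion of $(D(\rho))^{-1}$.

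\textbf{Main obstacle.} The hardest step will be the passage from the Hermitian, semi-definite identity for the composite $(N_k^-)^* A'(\rho_k) R_k^-$ to the corresponding properties of $N_k^-$ alone. This step requires a careful accounting of the ranges and kernels of the non-invertible matrices $R_k^-$, $A'(\rho_k)$, and $N_k^-$, and ultimately relies on the simple-pole hypothesis to pin down the right dimensional identities between $\ker A(\rho_k)$, $\ker A^*(\rho_k)$, and the images of $R_k^-$, $N_k^-$.
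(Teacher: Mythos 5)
Your treatment of \eqref{rankNk} is sound and is in substance the paper's own argument: the paper simply fixes an $x$ with $\det F_+(x,\rho_k)\ne 0$ (resp.\ $\det F_-(x,\rho_k)\ne 0$) instead of phrasing injectivity of $V\mapsto F_\pm(\cdot,\rho_k)V$, and uses $R_k^-=-(R_k^+)^*$ for the middle equality. The problem is \eqref{Nk*}, where your argument stops exactly where the work is: you reduce the claim to the assertion that the factor $A'(\rho_k)R_k^-$ in the composite $(N_k^-)^*A'(\rho_k)R_k^-$ ``can be cancelled'', and you yourself flag this as the main obstacle. As stated, the cancellation does not follow from the ingredients you list. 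The Laurent expansion gives $A'(\rho_k)R_k^- = I_m - A(\rho_k)B_0$ (with $B_0$ the constant term of $(A(\rho))^{-1}$ at $\rho_k$), so $(N_k^-)^*A'(\rho_k)R_k^- = (N_k^-)^* - (N_k^-)^*A(\rho_k)B_0$, and to extract $(N_k^-)^*$ you need $(N_k^-)^*A(\rho_k)=0_m$, i.e.\ $\mathrm{range}\, N_k^-\subseteq\ker A^*(\rho_k)$ --- which is essentially the Hermitian symmetry being proved; the fact that $R_k^-$ maps $\ker A^*(\rho_k)$ isomorphically onto $\ker A(\rho_k)$ says nothing about where the invertible factor $C_k(x)$ in $N_k^-=-iC_k(x)R_k^-$ sends $\ker A(\rho_k)$, so it does not supply this. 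There is also a sign trap you sidestep by never naming the sign of the ``definite'' form: if you take $U_k^*A'(\rho_k)V_k = i\|Y_k\|^2$ literally from the printed proof of Lemma~\ref{lem:poles}, your polarized identity makes $(N_k^-)^*A'(\rho_k)R_k^-$ negative semidefinite, and the completed argument would yield $N_k^-\le 0_m$; rederiving \eqref{Aprime} (the two $\rho$-differentiated Wronskian integrals must combine into an $x$-independent expression, which forces the term $-2\rho\int_{-\iy}^{\iy}\bar F_- F_+\,dx$) gives $U_k^*A'(\rho_k)V_k=-i\|Y_k\|^2$, and only with this sign does your route point toward $N_k^-\ge 0_m$.

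The gap is closable along your own lines: your displayed relation $R_k^+\bar F_+(x,\rho_k)=i(N_k^-)^*\bar F_-(x,\rho_k)$, inserted into \eqref{wronA} and combined with \eqref{wronF1} (which extends to $\rho=\rho_k$ because both factors decay as $x\to+\iy$), gives $(N_k^-)^*A(\rho_k) = -\frac{1}{2i\rho_k}\langle (N_k^-)^*\bar F_-, F_+\rangle = \frac{1}{2\rho_k}R_k^+\langle\bar F_+,F_+\rangle=0_m$; after that, $(N_k^-)^* = (N_k^-)^*A'(\rho_k)R_k^-$ is exactly the Gram matrix of the eigenfunctions $F_-(\cdot,\rho_k)\,iN_k^-e_j$, and \eqref{Nk*} follows. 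Be aware, though, that the paper takes a shorter route that avoids Laurent coefficients and kernel/range bookkeeping altogether: it integrates $\frac{d}{dx}\langle\bar F_+(x,\rho_k),F_+(x,\rho)\rangle=(\rho^2-\rho_k^2)\bar F_+(x,\rho_k)F_+(x,\rho)$, sandwiches the result between $(N_k^+)^*$ and $N_k^+$, and passes to the limit $\rho\to\rho_k$, $x\to-\iy$ using \eqref{weight+}, \eqref{wronA}, \eqref{defRes} and \eqref{asymptFderiv}, arriving directly at $N_k^+ = (N_k^+)^*\int_{-\iy}^{\iy}F_+^*(x,\rho_k)F_+(x,\rho_k)\,dx\,N_k^+$, from which \eqref{Nk*} is immediate.
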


\begin{proof}
By virtue of \eqref{Jostasymptx}, $\det F_+(x, \rho_k) \ne 0$ for $x > a$ and $\det F_-(x, \rho_k) \ne 0$ for $x < -a$,
if $a$ is sufficiently large. Therefore it follows from \eqref{weight-} and \eqref{weight+}, that $\rank R_k^{\pm} = \rank N_k^{\pm}$.
The relation $R_k^- = -(R_k^+)^*$ implies $\rank R_k^+ = \rank R_k^-$.

Now let us prove \eqref{Nk*} for $N_k^+$. The case of $N_k^-$ is similar. One can easily show that
$$
   \frac{d}{dx} \langle \bar F_+(x, \rho_k), F_+(x, \rho) \rangle = (\rho^2 - \rho_k^2) \bar F_+(x, \rho_k) F_+(x, \rho).
$$ 
Hence
$$
 	\langle \bar F_+(t, \rho_k), F_+(t, \rho) \rangle\Big|_x^{\iy} = (\rho^2 - \rho_k^2) \int_x^{\iy} \bar F_+(t, \rho_k) F_+(t, \rho_k)\, dt.
$$
Using \eqref{Jostasymptx}, we obtain
\begin{equation} \label{smeq1}
 	\frac{1}{\rho^2 - \rho_k^2} i (N_k^+)^* \langle \bar F_+(x, \rho_k), F_+(x, \rho) \rangle i N_k^+ = (N_k^+)^* \int_x^{\iy} \bar F_+(x, \rho_k) F_+(x, \rho) \, dx \, N_k^+.
\end{equation}
It follows from \eqref{barF} and \eqref{weight+}, that
\begin{equation} \label{smeq2}
 	i (N_k^+)^* \bar F_+(x, \rho_k) = - (R_k^+)^* \bar F_-(x, \rho_k) = R_k^- \bar F_-(x, \rho_k).
\end{equation}
Using \eqref{smeq1}, \eqref{smeq2}, \eqref{wronA}, \eqref{defRes}, we derive
\begin{multline*}
\lim_{\rho \to \rho_k} \frac{1}{\rho^2 - \rho_k^2} i (N_k^+)^* \langle \bar F_+(x, \rho_k), F_+(x, \rho) \rangle i N_k^+ \\ =
\lim_{\rho \to \rho_k} \frac{1}{\rho^2 - \rho_k^2} (\rho - \rho_k) (A(\rho))^{-1} \langle \bar F_-(x, \rho), F_+(x, \rho) \rangle i N_k^+  \\ +
R_k^- \lim_{\rho \to \rho_k} \frac{1}{\rho^2 - \rho_k^2} \langle \bar F_-(x, \rho_k) - \bar F_-(x, \rho), F_+(x, \rho) \rangle i N_k^+ \\ =
\frac{1}{2 \rho_k} \lim_{\rho \to \rho_k} (A(\rho))^{-1} (-2 i \rho) A(\rho) i N_k^+ -
\frac{1}{2 \rho_k} R_k^- \langle \tfrac{d}{d\rho} \bar F_-(x, \rho), F_+(x, \rho) \rangle_{\rho = \rho_k} i N_k^+ \\ =
N_k^+ - \frac{1}{2 \rho_k} R_k^- \langle \tfrac{d}{d\rho} \bar F_-(x, \rho_k), F_-(x, \rho_k) \rangle R_k^+. 
\end{multline*}
By virtue of \eqref{Jostasymptx} and \eqref{asymptFderiv}, 
$$
 	\lim_{x \to -\iy} \langle \tfrac{d}{d\rho} \bar F_-(x, \rho_k), F_-(x, \rho_k) \rangle = 0_m.
$$
Passing to the limit as $\rho \to \rho_k$, $x \to -\iy$ in \eqref{smeq1}, we arrive at the relation
$$
 	N_k^+ = (N_k^+)^* \int_{-\iy}^{\iy} F_+^*(x, \rho_k) F_+(x, \rho_k) \, dx \, N_k^+.
$$
Hence \eqref{Nk*} is valid.
\end{proof}

\begin{remark}
The ranks of the weight matrices coincide with the multiplicities of the corresponding eigenvalues,
i.e. the number of corresponding linearly independent eigenfunctions. This fact can be proved similarly
to \cite[Lemma 4]{Bond11}.	
\end{remark}

The collections 
$$
J_+ := \{ \{ S_+(\rho) \}_{\rho \in \mathbb R}, \{ \rho_k^2, N_k^+ \}_{k = 1}^N \}, \quad
J_- := \{ \{ S_-(\rho) \}_{\rho \in \mathbb R}, \{ \rho_k^2, N_k^- \}_{k = 1}^N \}
$$
are called the {\it right} and the {\it left scattering data}, respectively.

\smallskip

{\bf Inverse scattering problem.} {\it Given the scattering data $J_+$ (or $J_-$), construct the potential matrix $Q$.}

The next lemma establishes the connection between the left and the right scattering data.

\begin{lem} \label{lem:connect}
The following relations hold
$$
S_-(\rho) = -D^*(\rho) S_+^*(\rho) (D^*(-\rho))^{-1}, \quad \rho \in \mathbb R \backslash \{ 0 \}, 	
$$
\begin{equation} \label{Nkconnect}
 	N_k^- = R_k^+ (N_k^+)^{-1} (R_k^+)^*, \quad k = \overline{1, N}.
\end{equation}
Since $N_k^+$ is not necessarily invertible, the matrix $(N_k^+)^{-1}$ is defined as follows:
$$
 	N_k^+ = U^* \mathcal D U, \quad U^* = U^{-1}, \, \mathcal D = \mbox{diag}\, \{ d_1, d_2, \dots, d_j, 0, \dots, 0 \}, \, d_l > 0, \, l = \overline{1, j}, 
$$
$$
 	(N_k^+)^{-1} = U^* \mathcal D^{-1} U, \quad \mathcal D^{-1} = \mbox{diag}\, \{ d_1^{-1}, d_2^{-1}, \dots, d_j^{-1}, 0, \dots, 0 \}.
$$
\end{lem}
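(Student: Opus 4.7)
The plan is to prove each identity by direct algebraic manipulation, using the already established relations from the preceding lemmas. For the first identity, start from the definition $S_-(\rho) = B(\rho)(A(\rho))^{-1}$ in \eqref{defS}. For real $\rho$ one has $-\bar\rho = -\rho$, so \eqref{ABCDrel} gives $A(\rho) = D^*(-\rho)$ and $B(\rho) = -C^*(\rho)$. The definition $S_+(\rho) = C(\rho)(D(\rho))^{-1}$ yields $C(\rho) = S_+(\rho) D(\rho)$, and taking conjugate transpose gives $C^*(\rho) = D^*(\rho) S_+^*(\rho)$. Substituting these into $S_-(\rho) = B(\rho)(A(\rho))^{-1}$ produces $S_-(\rho) = -D^*(\rho) S_+^*(\rho) (D^*(-\rho))^{-1}$, as required.

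For the second identity, the strategy is to manipulate \eqref{weight+} into a form directly comparable with \eqref{weight-}. Rewriting \eqref{weight+} as $F_+(x,\rho_k) N_k^+ = -i F_-(x,\rho_k) R_k^+$, the plan is to multiply both sides on the right by $(N_k^+)^{-1}(R_k^+)^*$ to arrive at
\[
F_+(x,\rho_k)\, N_k^+ (N_k^+)^{-1} (R_k^+)^* = -i F_-(x,\rho_k) R_k^+ (N_k^+)^{-1} (R_k^+)^*.
\]
Then use $R_k^- = -(R_k^+)^*$ (from the remark after \eqref{defRes}) to rewrite the left side as $F_+(x,\rho_k) R_k^-$, and compare with \eqref{weight-}, cancelling $F_-(x,\rho_k)$ (which is invertible for sufficiently large $-x$ by \eqref{Jostasymptx}) to conclude $N_k^- = R_k^+ (N_k^+)^{-1} (R_k^+)^*$.

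The main obstacle is the degenerate case in which $N_k^+$ is not invertible, so $(N_k^+)^{-1}$ means the pseudoinverse. One must verify the nontrivial cancellation $N_k^+ (N_k^+)^{-1}(R_k^+)^* = (R_k^+)^*$ on the left side. Since $N_k^+ (N_k^+)^{-1} = U^* \mathrm{diag}(1,\dots,1,0,\dots,0) U$ is precisely the orthogonal projection onto $\mathrm{Range}(N_k^+) = (\ker N_k^+)^\perp$ (using that $N_k^+$ is Hermitian, by \eqref{Nk*}), this cancellation is equivalent to the inclusion $\ker N_k^+ \subset \ker R_k^+$. I would establish this inclusion directly from \eqref{weight+}: if $N_k^+ v = 0$, then the right-hand side of \eqref{weight+} applied to $v$ vanishes, giving $F_-(x,\rho_k) R_k^+ v = 0$ identically in $x$. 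Choosing $x$ with $\det F_-(x,\rho_k) \neq 0$ (available by \eqref{Jostasymptx} for sufficiently large $-x$) forces $R_k^+ v = 0$. This is the one step that is not a purely formal rearrangement and is where the rank identity \eqref{rankNk} implicitly enters.
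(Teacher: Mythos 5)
Your derivation of the first relation is exactly the paper's: substitute $B(\rho) = -C^*(\rho)$, $A(\rho) = D^*(-\rho)$ and $C(\rho) = S_+(\rho) D(\rho)$ into \eqref{defS}. For \eqref{Nkconnect} your route is a genuine variant of the paper's. You multiply \eqref{weight+} on the right by $(N_k^+)^{-1}(R_k^+)^*$ and justify the cancellation $N_k^+ (N_k^+)^{-1} (R_k^+)^* = (R_k^+)^*$ by proving the inclusion $\ker N_k^+ \subset \ker R_k^+$ directly from \eqref{weight+} and the invertibility of $F_-(x, \rho_k)$ for sufficiently large negative $x$; this is correct and makes explicit where the rank/kernel information enters. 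The paper instead introduces $C_k(x) = (F_-(x, \rho_k))^{-1} F_+(x, \rho_k)$, derives $N_k^- = -i C_k(x) R_k^- = C_k(x) N_k^+ C_k^*(x)$ from \eqref{weight-}, \eqref{defRes} and \eqref{Nk*}, and then factors through the half-powers, using $C_k(x) (N_k^+)^{1/2} = -i R_k^+ (N_k^+)^{-1/2}$; since $N_k^+ (N_k^+)^{-1/2} = (N_k^+)^{1/2}$ holds exactly for the spectral pseudoinverse, the paper never needs the kernel inclusion at all. Both arguments rest on the same ingredients, so the difference is mainly in how the degeneracy of $N_k^+$ is absorbed: your version isolates it as a separate (correctly proved) lemma, the paper's hides it in the square-root factorization. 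One slip to fix in your last step: after the cancellation the left-hand side is $F_+(x, \rho_k) (R_k^+)^* = -F_+(x, \rho_k) R_k^-$, not $F_+(x, \rho_k) R_k^-$; by \eqref{weight-} it equals $-i F_-(x, \rho_k) N_k^-$, which matches the factor $-i$ on the right-hand side and gives \eqref{Nkconnect} with the correct sign, whereas your wording taken literally would yield $N_k^- = -R_k^+ (N_k^+)^{-1} (R_k^+)^*$, contradicting $N_k^- \ge 0$.
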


\begin{proof}
Using \eqref{defS} and \eqref{ABCDrel}, we derive
\begin{multline*}
 	S_-(\rho) = B(\rho) (A(\rho))^{-1} = - C^*(\rho) (D^*(-\rho))^{-1} = (S_+(\rho) D(\rho))^* (D^*(-\rho))^{-1} \\ = -D^*(\rho) S_+^*(\rho) (D^*(-\rho))^{-1}.
\end{multline*}

In view of \eqref{Jostasymptx}, $\det F_-(x, \rho_k) \ne 0$ for $x < -a$, where $a$ is sufficiently large.
For such $x$, the relation \eqref{weight+} implies 
\begin{equation} \label{smeqRk}
 	R_k^+ = i C_k(x) N_k^+, \quad C_k(x) := (F_-(x, \rho_k))^{-1} F_+(x, \rho_k).
\end{equation}
Consequently,
\begin{equation} \label{smeqCk}
 	C_k(x) (N_k^+)^{1/2} = -i R_k^+ (N_k^+)^{-1/2},  
\end{equation}
where
$$
(N_k^+)^{-1/2} := U^* \mathcal D^{-1/2} U, \quad \mathcal D^{-1/2} = \mbox{diag}\, \{ d_1^{-1/2}, d_2^{-1/2}, \dots, d_j^{-1/2}, 0, \dots, 0 \},
$$
due to the notation above.
Using \eqref{weight-}, \eqref{smeqRk} and \eqref{Nk*}, we obtain
$$
 	N_k^- = -i C_k(x) R_k^- = i C_k(x) (R_k^+)^* = C_k(x) (N_k^+)^{1/2} ((N_k^+)^{1/2})^* C_k^*(x).
$$
Applying \eqref{smeqCk}, we arrive at \eqref{Nkconnect}.
\end{proof}

\medskip

{\bf 2.6. Behavior of $(A(\rho))^{-1}$ in the neighborhood of $\rho = 0$}

\begin{lem}
The folowing estimate is valid
\begin{equation} \label{estA0}
 (A(\rho))^{-1} = O(1), \quad \rho \to 0, \quad \mbox{Im}\,\rho \ge 0.
\end{equation}
\end{lem}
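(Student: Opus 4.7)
The plan is to prove \eqref{estA0} in two stages: an algebraic bound on the real axis using the identity for $A^* A$, and an analytic-continuation argument into the upper half-plane using the Wronskian representation of $A$.

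For real $\rho \ne 0$, the second identity in \eqref{ABrel} gives $A^*(\rho) A(\rho) = I_m + B^*(\rho) B(\rho) \ge I_m$, hence $\|A(\rho) v\| \ge \|v\|$ for every $v \in \mathbb{C}^m$. Consequently $(A(\rho))^{-1}$ exists with $\|(A(\rho))^{-1}\| \le 1$, which yields the estimate as $\rho \to 0$ along the real axis.

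For $\rho$ in the open upper half-plane, I would use the Wronskian formula \eqref{wronA} rewritten as
\[
(A(\rho))^{-1} = -2i\rho \, W(\rho)^{-1}, \qquad W(\rho) := \langle \bar F_-(x, \rho), F_+(x, \rho) \rangle.
\]
By property $(i_1)$ combined with \eqref{barF}, $W(\rho)$ is analytic in $\{\mbox{Im}\,\rho > 0\}$, continuous on $\{\mbox{Im}\,\rho \ge 0\}$, and independent of $x$. Since the eigenvalues $\{\rho_k\}$ are finite in number (Lemma~\ref{lem:finite}) and $\rho = 0$ is not one of them (Lemma~\ref{lem:eigen1}), $W(\rho)$ is invertible on some punctured neighborhood of $0$ in the closed upper half-plane, so $\rho W(\rho)^{-1} = -\tfrac{1}{2i}(A(\rho))^{-1}$ is analytic in that region and has norm $\le 1/2$ on the punctured real axis by the first step. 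It remains to show that this matrix function stays bounded as $\rho \to 0$ inside the upper half-plane.

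If $W(0)$ is invertible, this is immediate by continuity (indeed $\rho W(\rho)^{-1} \to 0$). In the exceptional case $\det W(0) = 0$, I would write $F_+(x, 0) = F_-(x, 0) \alpha + G_-(x, 0) \beta$ using the fundamental system $\{F_-(\cdot, 0), G_-(\cdot, 0)\}$ of \eqref{eqv} at $\rho = 0$, with $G_-$ the linearly growing second solution constructed as in the proof of Lemma~\ref{lem:eigen1}, and evaluate the Wronskian as $x \to -\infty$ to identify $W(0) = \beta$. Each $v \in \ker W(0) = \ker \beta$ then yields a bounded zero-energy solution $F_+(\cdot, 0) v$, which by Lemma~\ref{lem:eigen1} is not in $L_2$; a derivative-Wronskian computation analogous to Lemma~\ref{lem:poles}, but performed at $\rho = 0$ rather than at an interior $\rho_k$, shows that $W(\rho)^{-1}$ has at most a simple pole at $\rho = 0$, so $\rho W(\rho)^{-1} = O(1)$. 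The hard part will be precisely this exceptional-case analysis (zero-energy resonances/half-bound states), handled at the boundary point $\rho = 0$ rather than at the interior eigenvalues where Lemma~\ref{lem:poles} was proved.
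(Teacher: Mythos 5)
Your first two steps (the bound $\|(A(\rho))^{-1}\|\le 1$ on the real axis from \eqref{ABrel}, and the reduction to the boundedness of $\rho W(\rho)^{-1}$ near $\rho=0$, $W(\rho)=\langle \bar F_-(x,\rho),F_+(x,\rho)\rangle$) are fine, and the generic case $\det W(0)\ne 0$ is indeed immediate. But the exceptional case $\det W(0)=0$ is the entire content of the lemma, and the route you sketch for it does not go through under the paper's hypothesis \eqref{xQ}. First, the phrase ``$W(\rho)^{-1}$ has at most a simple pole at $\rho=0$'' has no meaning here: the Jost solutions, hence $W$, are defined and analytic only for $\mbox{Im}\,\rho>0$ and merely continuous up to the real axis, so $\rho=0$ is a boundary point, not an isolated singularity, and no Laurent/pole structure is available. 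Second, the derivative-Wronskian computation of Lemma~\ref{lem:poles} that you want to transplant to $\rho=0$ relies on the estimates \eqref{asymptFderiv}, i.e.\ on $\tfrac{d}{d\rho}F_\pm(x,\rho_k)=O(1)$ as $x\to\pm\infty$; these hold at $\rho_k=i\tau_k$ because the factor $e^{-\tau_k x}$ kills the linear growth coming from differentiating $e^{i\rho x}$ and the transformation kernels, and they fail at $\rho=0$, where controlling $\tfrac{d}{d\rho}F_\pm(x,0)$ uniformly in $x$ would require a second moment of $Q$, which is not assumed. Third, even the soft part of your reduction is not enough by itself: a matrix function analytic in the open half-disc, continuous and bounded by $1$ on the punctured real segment and bounded on the semicircle, can still blow up as $\rho\to 0$ from inside (think of $e^{i/\rho}$), so some extra mechanism is indispensable precisely in the resonance case, and your proposal does not supply one.

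The paper avoids all of this by an approximation argument: it truncates the potential to $Q_r$, for which $\rho A_r(\rho)$ is entire and $(A_r(\rho))^{-1}$ is regular at $\rho=0$, proves a bound $N^{(r)}\le N_*$ on the number of eigenvalues of $Q_r$ uniform in $r$ (by the same orthogonality/compactness device as in Lemma~\ref{lem:finite}), removes the possible zeros of $\det A_r$ near the origin by a Blaschke-type product $P_r(\rho)$ with uniformly bounded factors, applies the maximum modulus principle to $H_r=P_r A_r^{-1}$ on a fixed half-disc to get a bound independent of $r$, and then lets $r\to\infty$. This needs neither differentiability of the Jost solutions at $\rho=0$ nor any case distinction between generic and resonance situations. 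If you want to keep your direct low-energy analysis, you would have to either strengthen \eqref{xQ} to a second-moment condition or import the detailed small-energy asymptotics of the kind given in the Aktosun--Klaus reference; as written, the key step of your argument is a gap, not a proof.
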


\begin{proof}
Introduce the potentials
$$
 	Q_r(x) = \begin{cases}
 	            Q(x), \quad |x| \le r, \\
 	            0, \quad |x| > r,
             \end{cases}
   \quad r > 0.
$$
Let $F_{\pm r}(x, \rho)$ be the corresponding Jost solutions.
According to the integral equations \eqref{Jostint}, the matrix functions $F_{\pm r}(x, \rho)$ 
are entire in $\rho$ for each fixed $x$, and
$$
 	\lim_{r \to \iy} \sup_{\mbox{Im}\,\rho \ge 0} \sup_{\pm x \ge a} \| (F_{\pm r}^{(\nu)}(x, \rho) - F_{\pm}^{(\nu)}(x, \rho) ) \exp(\mp i \rho x) \| = 0, 
 	\quad \nu = 0, 1, \, a \in \mathbb R.
$$
Denote
$$
 	A_r(\rho) := -\frac{1}{2 i \rho} \langle \bar F_{-r}(x, \rho), F_{+r}(x, \rho) \rangle.
$$
Obviously, the matrix function $\rho A_r(\rho)$ is entire in $\rho$ and
\begin{equation} \label{limAr}
\lim_{r \to \iy} \rho A_r(\rho) = \rho A(\rho)
\end{equation} 
uniformly for $\mbox{Im}\,\rho \ge 0$.

Let $( \rho_k^{(r)} )^2$, $k = \overline{1, N^{(r)}}$, be the eigenvalues of the potential $Q_r(x)$ (counted with their multiplicities).
Note that by virtue of Lemma~\ref{lem:asymptABCD} and 
\eqref{limAr}, 
\begin{equation} \label{eigenr}
T := \max_{k, r} |\rho_k^{(r)}| < \iy. 
\end{equation}
Let us show that $N^{(r)} \le N_*$. Assume to the contrary that $N^{(r_i)} \to \iy$ as $i \to \iy$.
Fix $\eps > 0$. For each $r$, choose the largest possible subset of eigenvalues, such that
$\| \rho_j^{(r)} - \rho_k^{(r)} \| < \eps$, $j, k = \overline{1, M^{(r)}}$ (without loss of generality, we may assume 
that the eigenvalues with indices $k = \overline{1, M^{(r)}}$ belong to this subset).In view of \eqref{eigenr}, we have
$M^{(r_i)} \to \iy$ as $i \to \iy$. The vector eigenfunctions, corresponding to the eigenvalues $\rho_k^{(r)}$, can be 
represented in the following form
$$
 	Y_{kr}(x) = F_{+r}(x, \rho_k^{(r)} ) V_k^{(r)} = F_{-r}(x, \rho_k^{(r)}) U_k^{(r)}, \quad V_k^{(r)}, \, U_k^{(r)} \in \mathbb C^m, \quad
 	\| V_k^{(r)} \| = \| U_k^{(r)} \| = 1.
$$
We assume that multiple eigenvalues are counted multiple times with pairwise orthogonal eigenfunctions.
Since the unit sphere in $R^m$ is compact, for sufficiently large $r$ in the sequence $\{ r_i \}$ one can choose a couple
of eigenvalues (call them $\rho_1^{(r)}$ and $\rho_2^{(r)}$), such that
\begin{equation} \label{smeqr}
 	\| \rho_1^{(r)} - \rho_2^{(r)} \| < \eps, \quad \| V_1^{(r)} - V_2^{(r)} \| < \eps.
\end{equation}

Following the proof of Lemma~\ref{lem:finite}, we use the orthogonality of the eigenfunctions:
\begin{multline} \label{ort2} 
0 = \int_{-\iy}^{\iy} Y_{1 r}^*(x) Y_{2 r}(x) \, dx = V_1^{(r)*} \int_a^{\iy} \bar F_{+r}(x, \rho_1^{(r)}) F_{+r}(x, \rho_2^{(r)}) \, dx \, V_2^{(r)} \\ + 
U_1^{(r)*} \int_{-\iy}^{-a} \bar F_{-r}(x, \rho_1^{(r)}) F_{-r}(x, \rho_2^{(r)}) \, dx \, U_2^{(r)} + \int_{-a}^a Y_{1r}^{*}(x) Y_{1r}(x) \, dx + 
\int_{-a}^a Y_{1r}^{*}(x) (Y_{2r}(x) - Y_{1 r}(x)) \, dx \\ =: \mathcal I_1 + \mathcal I_2 + \mathcal I_3 + \mathcal I_4.
\end{multline}
Similarly to the proof of Lemma~\ref{lem:finite}, one can show that
$$
\mathcal{I}_1, \, \mathcal{I}_2 \ge \frac{\exp(-2 a T)}{4 T}, \quad \mathcal I_3 \ge 0,
$$
where $T$ was defined in \eqref{eigenr}. It follows from \eqref{smeqr}, that $\| \mathcal{I}_4 \| \le C \eps$, where the constant $C$
does not depend on $r$. Choosing sufficiently small $\eps$, we obtain $\mathcal{I}_1 + \mathcal{I}_2 + \mathcal{I}_3 + \mathcal{I}_4 > 0$
and arrive at a contradiction with \eqref{ort2}. Thus, $N^{(r)} \le N_*$.

Denote
$$
  \tau^* := \frac{1}{2} \min \tau_k, \quad \mathcal{D} := \{ \rho \colon |\rho| < \tau^*, \, \mbox{Im}\, \rho > 0 \}, \quad
  \mathcal{D}_{\delta} := \{ \rho \colon \mbox{Im}\,\rho > 0, \, \de < |\rho| < \tau^* \},
$$
$$
  P_r(\rho) := \prod_{\rho_k^{(r)} \in \mathcal{D}} \frac{\rho - \rho_k^{(r)}}{\rho + \rho_k^{(r)}}, \quad H_r(\rho) := P_r(\rho) (A_r(\rho))^{-1}.
$$
By virtue of \eqref{limAr}, the set $\{ \rho_k^{(r)} \}$ does not have limit points in $\overline{ \mathcal{D}}$ other than $\rho = 0$.
Note that $\| (A_r(\rho))^{-1} \| \le 1$ holds for real $\rho \ne 0$, since \eqref{SA} holds for $A_r(\rho)$. Therefore, $(A_r(\rho))^{-1}$ is regular at $\rho = 0$,
so it is bounded in $\partial \mathcal D$ for sufficiently large $r$. Since $N^{(r)} \le N_*$ and $\rho_k^{(r)}$ in the product $P_r$ tend to zero, 
$\| P_r(\rho) \| \le C$ in $\partial \mathcal D$. Consequently, for sufficiently large $r$, $\| H_r(\rho) \| \le C$ in $\partial \mathcal D$,
where the constant $C$ does not depend on $r$. The matrix function $H_r(\rho)$ is analytic in $\mathcal D$ and continuous in 
$\overline{ \mathcal D}$, therefore the estimate $\| H_r(\rho) \| \le C$ holds in $\overline{ \mathcal D}$.

It follows from \eqref{limAr}, that
$$
\lim_{r \to \iy} (A_r(\rho))^{-1} = (A(\rho))^{-1}, 
$$
uniformly by $\rho \in \overline{ \mathcal D}_{\de}$. Clearly, $P_r(\rho) \to 1$ as $r \to \iy$. Hence
$\lim\limits_{r \to \iy} H_r(\rho) = (A(\rho))^{-1}$ uniformly in $\overline{\mathcal D}_{\de}$. Consequently,
$\| (A(\rho))^{-1} \| \le C$ in $\overline{\mathcal D_{\de}}$. Since $C$ does not depend on $\de$, we arrive at~\eqref{estA0}.
\end{proof}

\begin{remark}
A more detailed analysis of asymptotic behavior of the scattering matrix near $\rho = 0$ is presented in \cite{AK01}.
\end{remark}

\medskip

{\large \bf 3. Derivation of the Gelfand-Levitan-Marchenko equation}

\setcounter{section}{3}
\setcounter{equation}{0}

\medskip

In this section, we show the reduction of the studied inverse scattering problem to the linear integral equation.
Although Gelfand-Levitan-Marchenko equation was used before (see, for example \cite{WK74, Olm85}),
we provide its derivation in order to make the paper self-contained.

\begin{thm} \label{thm:GLM}
For each fixed $x$, the matrix functions $K_{\pm}(x, t)$ (see \eqref{transform}) satisfy the
Gelfand-Levitan-Marchenko equations
\begin{equation} \label{GLM+}
M_+(x + y) + K_+(x, y) + \int_x^{\iy} K_+(x, t) M_+(t + y) \, dt = 0_m, \quad y > x,
\end{equation}
\begin{equation} \label{GLM-}
M_-(x + y) + K_-(x, y) + \int_{-\iy}^x K_-(x, t) M_-(t + y) \, dt = 0_m, \quad y < x,
\end{equation}
where
\begin{equation} \label{defM}
M_{\pm}(x) = R_{\pm}(x) + \sum_{k = 1}^N N_k^{\pm} \exp(\mp \tau_k x),
\end{equation}
the matrix functions $R_{\pm}(x)$ are defined in \eqref{defR}, and $\rho_k = i \tau_k$.
\end{thm}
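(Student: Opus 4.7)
The plan is to Fourier-transform the scattering factorization
$$ F_+(x,\rho)\,S_+(\rho) \;=\; F_-^0(x,\rho) - F_+(x,-\rho), $$
which follows immediately from \eqref{CD} and the definition \eqref{defS} of $S_+$, after multiplying both sides by $(2\pi)^{-1}\exp(i\rho y)$ with $y>x$ fixed and integrating $\rho$ over $\mathbb R$. The derivation of \eqref{GLM-} is completely symmetric---start from $F_-(x,\rho)S_-(\rho) = F_+^0(x,\rho) - F_-(x,-\rho)$, multiply by $(2\pi)^{-1}\exp(-i\rho y)$, and take $y<x$---so I sketch only \eqref{GLM+}.

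On the $F_+(x,\rho)S_+(\rho)$ side, inserting the transformation-operator representation \eqref{transform} for $F_+(x,\rho)$ and exchanging the order of integration, the Fourier inversion formulas \eqref{defR}--\eqref{inverseR} collapse the two $S_+$-integrals into $R_+(x+y)$ and $\int_x^{\iy} K_+(x,t) R_+(t+y)\,dt$. The $F_+(x,-\rho)\exp(i\rho y)$ term is treated similarly: after substituting \eqref{transform} and changing $\rho\mapsto-\rho$, the pure-exponential contribution is a multiple of $\de(y-x)$ which vanishes because $y>x$ strictly, while the remaining term reduces by Fourier inversion to $K_+(x,y)$.

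The core step is to evaluate $\frac{1}{2\pi}\int_{-\iy}^{\iy}[F_-^0(x,\rho)-\exp(-i\rho x)I_m]\exp(i\rho y)\,d\rho$ by contour integration in the upper half-plane. The matrix function $F_-^0(x,\rho) = F_-(x,\rho)(D(\rho))^{-1}$ is meromorphic for $\mbox{Im}\,\rho > 0$ with simple poles exactly at $\rho=\rho_k$ (Lemma~\ref{lem:poles} applied to $D$ is legitimate via \eqref{ABCDrel}), while the estimate \eqref{estA0} together with its $D$-analogue rules out any real-axis singularity. Combining \eqref{Jostasymptrho} for $F_-$ with Lemma~\ref{lem:asymptABCD} for $(D(\rho))^{-1}$ gives $[F_-^0(x,\rho)-\exp(-i\rho x)I_m]\exp(i\rho x) = O(\rho^{-1})$ as $|\rho|\to\iy$ uniformly in $\mbox{Im}\,\rho\ge 0$; since $y-x>0$, the factor $\exp(i\rho(y-x))$ decays in the upper half-plane and a Jordan-type estimate kills the semicircular contribution as $R\to\iy$. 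The residue theorem then delivers $i\sum_{k=1}^N F_-(x,\rho_k) R_k^+ \exp(-\tau_k y)$, which by \eqref{weight+} equals $-\sum_{k=1}^N F_+(x,\rho_k) N_k^+ \exp(-\tau_k y)$.

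Inserting $F_+(x,\rho_k) = \exp(-\tau_k x) I_m + \int_x^{\iy} K_+(x,t)\exp(-\tau_k t)\,dt$ from \eqref{transform} into the discrete sum and grouping its free exponential with $R_+(x+y)$, and its $K_+$-convolved exponential with $\int_x^{\iy} K_+(x,t)R_+(t+y)\,dt$, assembles the composite kernel $M_+$ of \eqref{defM} and produces \eqref{GLM+}. The main obstacle I anticipate is the rigorous handling of the $\rho$-integrals: none of $F_-^0$, $F_+(\cdot,\pm\rho)$, nor $S_+$ lies in $L^1(\mathbb R)$, so the integrals must be interpreted as improper or via truncation to $[-R,R]$ with $R\to\iy$; the decay on the semicircle relies jointly on the sharp asymptotic $(i_4)$, Lemma~\ref{lem:asymptABCD}, and the strict inequality $y>x$; and the absence of a real-axis pole of $F_-^0$ at $\rho=0$ has to be justified via the estimate \eqref{estA0}.
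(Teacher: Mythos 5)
Your proposal is correct and follows essentially the same route as the paper: rearrange \eqref{CD} via \eqref{defS}, Fourier-invert using the transformation operators \eqref{transform} and \eqref{defR}--\eqref{inverseR}, and evaluate the $F_-(x,\rho)(D(\rho))^{-1}$ contribution by contour integration in the upper half-plane, picking up residues at $\rho_k$ handled through \eqref{weight+}, with \eqref{Jostasymptrho}, Lemma~\ref{lem:asymptABCD} and the analogue of \eqref{estA0} controlling the behavior at $\infty$ and at $\rho=0$. The only (cosmetic) difference is that the paper regularizes by subtracting the full Jost solution $F_-(x,\rho)$ and treats the whole combination as a single Fourier pair $H(x,y)$, whereas you subtract the free exponential $\exp(-i\rho x)I_m$ and invert term by term.
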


\begin{proof}
The relation \eqref{CD} can be rewritten in the form
\begin{equation} \label{smeq3}
  F_-(x, \rho) ((D(\rho))^{-1} - I_m) = F_+(x, \rho) S_+(\rho) + F_+(x, -\rho) - F_-(x, \rho).
\end{equation}
Put $K_+(x, t) = 0_m$ for $t < x$ and $K_-(x, t) = 0_m$ for $t > x$.
By virtue of \eqref{transform} and \eqref{defR}
\begin{multline*}
F_+(x, \rho) S_+(\rho) + F_+(x, -\rho) - F_-(x, \rho) = \Bigl( \exp(i \rho x) + \int_{-\iy}^{\iy} K_+(x, t) \exp(i \rho t) \, dt \Bigr) \\
\cdot \Bigl( \int_{-\iy}^{\iy} R_+(y) \exp(i \rho y) \, dy \Bigr)  + \int_{-\iy}^{\iy} (K_+(x, t) - K_-(x, t)) \exp(-i\rho t) \, dt \\ =: 
\int_{-\iy}^{\iy} H(x, y) \exp(-i\rho y) \, dy.
\end{multline*}
where
$$
 H(x, y) = K_+(x, y) - K_-(x, y) + R_+(x + y) + \int_x^{\iy} K_+(x, t) R_+(t + y) \, dt.
$$
Thus, for each fixed $x$, the right-hand side of \eqref{smeq3} is the Fourier transform of $H(x, y)$.
Applying the inverse Fourier transform, we get
\begin{equation} \label{defH}
  H(x, y) = \frac{1}{2\pi} \int_{-\iy}^{\iy} F(\rho) \, d\rho, \quad F(\rho) :=  F_-(x, \rho) ((D(\rho))^{-1} - I_m) \exp(i \rho y).
\end{equation}
Using \eqref{Jostasymptrho} and Lemma \ref{lem:asymptABCD}, we obtain the following asymptotic formula for fixed $x < y$:
\begin{equation} \label{smasymptF}
 	F(\rho) = \frac{1}{i \rho} \exp(i \rho(y - x)) (\om + o(1)), \quad |\rho| \to \iy, \, \mbox{Im}\, \rho \ge 0.
\end{equation}
The matrix function $F(\rho)$ is meromorphic in the upper-half plane with the poles $\rho_k$, $k = \overline{1, N}$.
Let $C_{\de, R}$ be a closed contour (with the counterclockwise circuit), which is the boundary of the domain
$D_{\de, R} := \{ \rho \colon \mbox{Im}\,\rho > 0, \, \de < |\rho| < R \}$. The residue theorem yields
$$
 	\frac{1}{2 \pi i} \int_{C_{\de, R}} F(\rho) \, d\rho = \sum_{k = 1}^N \Res_{\rho = \rho_k} F(\rho).
$$
It follows from the estimates \eqref{Jostasymptrho}, \eqref{estA0} and \eqref{smasymptF}, that
$$
 	\lim_{R \to \iy, \, \eps \to 0} \frac{1}{2 \pi i} \int_{C_{\de, R}} F(\rho) \, d\rho = \frac{1}{2 \pi i} \int_{-\iy}^{\iy} F(\rho)\,d\rho.
$$
Hence 
\begin{equation} \label{sumH}
   H(x, y) = i \sum_{k = 1}^N \Res_{\rho = \rho_k} F(\rho).
\end{equation}
Using \eqref{weight+} and \eqref{transform}, we obtain 
\begin{multline} \label{calcResF}
 	\Res_{\rho = \rho_k} F(\rho) = F_-(x, \rho_k) R_k^+ \exp(i \rho_k y) = F_+(x, \rho_k) N_k^+ \exp(-\tau_k x) \\ =
 	N_k^+ \exp(-\tau_k (x + y)) + \int_x^{\iy} K_+(x, t) N_k^+ \exp(-\tau_k (t + y)) \, dt.
\end{multline}
Thus, combining \eqref{defH}, \eqref{sumH} and \eqref{calcResF}, and taking \eqref{defM} into account, we arrive at equation \eqref{GLM+}. Equation \eqref{GLM-}
can be derived similarly.         
\end{proof}

\begin{cor} \label{cor:M}
The matrix functions $M_{\pm}(x)$ are absolutely continuous, and for each fixed $a > -\iy$ the following
estimates are valid:
\begin{equation*}
\int_a^{\iy} \| M_{\pm}(\pm x) \| \, dx < \iy, \quad \int_a^{\iy} (1 + |x|) \| M'_{\pm}(\pm x) \| \, dx < \iy.
\end{equation*}
\end{cor}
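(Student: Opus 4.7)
The plan is to split $M_\pm$ via \eqref{defM} and handle the continuous-spectrum part $R_\pm$ separately from the discrete sum $\Sigma_\pm(x) := \sum_{k=1}^N N_k^\pm e^{\mp \tau_k x}$. Since all $\tau_k > 0$, $\Sigma_+$ is $C^\infty$ on $\mathbb R$ with exponential decay (together with its derivative) as $x \to +\infty$, and likewise $\Sigma_-$ decays exponentially as $x \to -\infty$. Thus $\Sigma_\pm$ is absolutely continuous and its contribution to both $\int_a^{\infty} \|M_\pm(\pm x)\| \, dx$ and $\int_a^{\infty} (1 + |x|) \|M_\pm'(\pm x)\| \, dx$ is finite for every $a > -\infty$, so the content of the corollary reduces to establishing the same assertions for $R_\pm$.

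For $R_+$ I reinterpret the Gelfand-Levitan-Marchenko equation \eqref{GLM+} as an integral equation for $M_+$ driven by the Jost kernel $K_+$. Fix $a \in \mathbb R$, choose $x_0$ with $2 x_0 < a$, and apply the change of variables $u = x_0 + y$, $s = t + y$ to \eqref{GLM+} at $x = x_0$; this yields
\begin{equation*}
M_+(u) = -K_+(x_0, u - x_0) - \int_u^{\infty} K_+(x_0, s - u + x_0) \, M_+(s) \, ds, \quad u > 2 x_0.
\end{equation*}
Both the forcing term and the convolution-type kernel are controlled by the standard estimates for the transformation kernel in \cite[Ch.~I]{AM60}: iteration of \eqref{Jostint} under the hypothesis \eqref{xQ} produces absolute continuity of $K_+(x_0, \cdot)$, the bound $\int_{x_0}^{\infty} \|K_+(x_0, t)\| \, dt < \infty$, and the weighted bound $\int_{x_0}^{\infty} (1 + |t|) \|\partial_t K_+(x_0, t)\| \, dt < \infty$.

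These properties then transfer to $M_+$. The Volterra structure of the integral term permits a bootstrap: since $\int_{u - x_0}^{\infty} \|K_+(x_0, r + x_0)\| \, dr \to 0$ as $u \to +\infty$, the integral operator is a contraction on $L^\infty(U, \infty)$ for $U$ sufficiently large, and on such an interval $M_+$ inherits the bounds of $K_+(x_0, \cdot - x_0)$. The estimates are then pushed back to $(a, \infty)$ by finitely many iterations of the equation, using the finite $L^1$ norm of $K_+(x_0, \cdot)$, yielding $\int_a^{\infty} \|M_+(u)\| \, du < \infty$. Absolute continuity of $M_+$ and the weighted integrability of $M_+'$ follow by differentiating the equation in $u$, producing an analogous Volterra equation for $M_+'$ with forcing $-\partial_u K_+(x_0, u - x_0)$, to which the same argument applies and transfers the weighted $L^1$ bound from $\partial_u K_+$ to $M_+'$. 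The case of $M_-$ is symmetric, via \eqref{GLM-} with $x_0 \ge a/2$ sufficiently large. The principal obstacle is propagating the weight $1 + |u|$ through the inversion: the Volterra kernel shifts the weight into the argument of $K_+$, so closing the estimate requires precisely the weighted $L^1$ bound on $K_+$ noted above.
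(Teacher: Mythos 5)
Your overall strategy (read \eqref{GLM+} at a fixed base point $x_0<a/2$ as a triangular integral equation for $M_+$ with data expressed through $K_+(x_0,\cdot)$, and pull the estimates of the transformation kernel over to $M_+$) is the natural one, and it is in the spirit of the scalar proofs the paper points to (\cite[Ch.~3]{Mar77}, \cite[Lemma 3.2.2]{FY01}); the paper itself offers no argument beyond that citation. However, as written your proof has two genuine gaps. The central one is the input you call ``standard'': the weighted bound $\int_{x_0}^{\infty}(1+|t|)\,\|\partial_t K_+(x_0,t)\|\,dt<\infty$ is \emph{not} among the estimates obtainable by straightforward iteration of \eqref{Jostint} under \eqref{xQ}, and it does not follow from the bounds in \cite{AM60}. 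Those bounds give $\bigl\|\partial_t K_+(x_0,t)+\tfrac14 Q\bigl(\tfrac{x_0+t}{2}\bigr)\bigr\|\le C\,\sigma(x_0)\,\sigma\bigl(\tfrac{x_0+t}{2}\bigr)$ with $\sigma(x)=\int_x^{\infty}\|Q(s)\|\,ds$, and $\int^{\infty}(1+t)\,\sigma\bigl(\tfrac{x_0+t}{2}\bigr)\,dt$ is finite only when $Q$ has a finite \emph{second} moment, which \eqref{xQ} does not provide (e.g.\ $\|Q(t)\|\sim t^{-5/2}$). The truth of the corollary under a first moment alone rests on cancellations between the $Q$-term and the iterated terms (to leading order $M_+'(u)\approx\tfrac14 Q(u/2)$, which is why only the derivative carries the weight, while $\int(1+u)\|M_+(u)\|\,du$ is in general infinite); a termwise transfer of bounds from $K_+$ to $M_+$ discards exactly these cancellations, so the step you quote is essentially of the same depth as the statement being proved. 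Relatedly, if you differentiate your equation keeping the boundary term $K_+(x_0,x_0)M_+(u)$, the weighted estimate requires $\int_a^\infty(1+u)\|M_+(u)\|\,du<\infty$, which is false in general; the convolution form avoids this but still needs the unproven weighted bound on $\partial_t K_+$ and a resolvent bound on the weighted space.

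The second gap is the contraction argument. For the operator $(Tf)(u)=\int_u^{\infty}K_+(x_0,s-u+x_0)f(s)\,ds$ the relevant kernel mass is $\int_u^{\infty}\|K_+(x_0,s-u+x_0)\|\,ds=\int_{x_0}^{\infty}\|K_+(x_0,v)\|\,dv$, which is \emph{independent of $u$}: as $s$ runs over $(u,\infty)$ the second argument of $K_+$ starts at $x_0$, not at $u$. Hence $T$ is not a contraction on $L^\infty(U,\infty)$ for large $U$ at fixed $x_0$; the quantity $\int_{u-x_0}^{\infty}\|K_+(x_0,r+x_0)\|\,dr\to0$ that you invoke is not the operator norm. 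Smallness is available only if the base point of the GLM equation itself is taken large (then $\int_{X}^{\infty}\|K_+(X,v)\|\,dv\to0$ as $X\to+\infty$), which covers only $u>2X$; for the remaining finite interval $(a,2X)$ one should instead appeal to the Volterra structure on a bounded interval or to the unique solvability already established in Theorem~\ref{thm:GLMsol}. This part of your argument is repairable along those lines, but the weighted derivative estimate in the previous paragraph is a real missing piece, not a quotable fact.
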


\begin{proof}
The proof is quite similar to the scalar case (see \cite[Chapter 3]{Mar77}, \cite[Lemma 3.2.2]{FY01}).
\end{proof}

\medskip

{\large \bf 4. The unique solvability of the Gelfand-Levitan-Marchenko equation}

\setcounter{section}{4}
\setcounter{equation}{0}

\medskip

The unique solvability of equations \eqref{GLM+} and \eqref{GLM-} plays a key role in the analysis
of the inverse scattering problem.

Let us introduce the following conditions (Condition A$_+$ for the data $J_+$ and Condition A$_-$ for the data $J_-$).

\smallskip

{\bf Condition A$_\pm$.} For real $\rho \ne 0$, the matrix functions $S_{\pm}(\rho)$ are continuous, and
$$
 	\| S_{\pm}(\rho) \| < 1, \quad S_{\pm}^*(\rho) = S_{\pm}(-\rho), \quad S_{\pm}(\rho) = o(\rho^{-1}), \quad |\rho| \to \iy. 
$$
The matrix functions 
$$
 	R_{\pm}(x) = \int_{-\iy}^{\iy} S_{\pm}(\rho) \exp(\pm i \rho x) \, d\rho
$$ 
are absolutely continuous, $R_{\pm} \in L_2((-\iy, \iy);\mathbb C^{m \times m})$, $R_{\pm}(x) = R_{\pm}^*(x)$, and for
each fixed $a > -\iy$ the following estimates hold
\begin{equation} \label{estR}
\int_a^{\iy} \| R_{\pm}(\pm x) \| \, dx < \iy, \quad \int_a^{\iy} (1 + |x|) \| R'_{\pm}(\pm x) \| \, dx < \iy.
\end{equation}
Moreover, $\rho_k = i \tau_k$, $\tau_k > 0$, $N_k^{\pm} = (N_k^{\pm})^* \ge 0$, $k = \overline{1, N}$.

\begin{thm} \label{thm:GLMsol}
Let the collection $J_+ = \{ S_+(\rho), \rho_k^2, N_k^+ \}$ (or $J_-$) satisfies Condition A$_+$ (A$_-$).
Then for each fixed $x$, equation \eqref{GLM+} (respectively, \eqref{GLM-}) has a unique solution 
$K_+(x, y) \in L((x, \iy); \mathbb C^{m \times m})$ (respectively, $K_-(x, y) \in L((-\iy, x); \mathbb{C}^{m \times m})$).
\end{thm}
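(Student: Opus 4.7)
The plan is to recast equation \eqref{GLM+} (the case of \eqref{GLM-} being entirely symmetric) as the operator equation
$$
(I + \mathcal{M}_x) K_+(x, \cdot) = -M_+(x + \cdot), \qquad (\mathcal{M}_x f)(y) := \int_x^{\iy} f(t) M_+(t + y) \, dt,
$$
and to apply the Fredholm alternative on $L((x, \iy); \mathbb{C}^{m \times m})$. Compactness of $\mathcal{M}_x$ follows by approximation: since $M_+ \in L$ on $[2x, \iy)$ by Condition~A$_\pm$ and the exponential decay of the discrete part in \eqref{defM}, one approximates $M_+$ in $L$-norm by compactly supported continuous kernels (for which the operators are trivially compact) and passes to the limit in operator norm. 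Any kernel element $f$ of $I + \mathcal{M}_x$ in $L$ satisfies $f = -\mathcal{M}_x f$, which is bounded since $M_+$ is bounded on $[2x, \iy)$; thus $f \in L \cap L_{\iy} \subset L_2$, and it suffices to prove triviality of $\ker(I + \mathcal{M}_x)$ in $L_2((x, \iy); \mathbb{C}^{m \times m})$.

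Let $f \in L_2$ satisfy $f + \mathcal{M}_x f = 0$. Taking the conjugate transpose and using $M_+^* = M_+$ gives $f^*(y) + \int_x^{\iy} M_+(y + t) f^*(t) \, dt = 0$. Multiplying on the left by $f(y)$, taking the trace and integrating over $y$ yields
$$
\int_x^{\iy} \mathrm{tr}(f(y) f^*(y)) \, dy + \int_x^{\iy}\!\!\int_x^{\iy} \mathrm{tr}\bigl(f(y) M_+(y + t) f^*(t)\bigr) \, dy \, dt = 0.
$$
Splitting $M_+ = R_+ + \sum_{k=1}^N N_k^+ \exp(-\tau_k \cdot)$ via \eqref{defM}, the discrete part contributes $\sum_{k=1}^N \mathrm{tr}(\tilde w_k N_k^+ \tilde w_k^*) \ge 0$ with $\tilde w_k := \int_x^{\iy} f(t) \exp(-\tau_k t) \, dt$, because $N_k^+ \ge 0$ by Condition~A$_\pm$. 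For the continuous part, with $\hat f(\rho) := \int_x^{\iy} f(t) \exp(i \rho t) \, dt$, Plancherel together with \eqref{inverseR} recasts the sum of the first term and the $R_+$-contribution as
$$
\frac{1}{2 \pi} \int_{-\iy}^{\iy} \bigl[ \mathrm{tr}(\hat f(\rho) \hat f^*(\rho)) + \mathrm{tr}(\hat f(\rho) S_+(\rho) \hat f^*(-\rho)) \bigr] d\rho.
$$

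Writing $F := \hat f(\rho)$, $G := \hat f(-\rho)$, $S := S_+(\rho)$ and symmetrizing the second trace by the change $\rho \leftrightarrow -\rho$ (using $S_+(-\rho) = S_+^*(\rho)$ from Condition~A$_\pm$), a direct computation gives the key algebraic identity
$$
\mathrm{tr}\bigl(F F^* + G G^* + F S G^* + G S^* F^*\bigr) = \mathrm{tr}\bigl((F + G S^*)(F + G S^*)^*\bigr) + \mathrm{tr}\bigl(G (I_m - S^* S) G^*\bigr).
$$
Since $\| S_+(\rho) \| < 1$ on $\mathbb{R} \setminus \{0\}$, the matrix $I_m - S^* S$ is strictly positive definite there, so both summands are nonnegative and their sum vanishes only when $F = G = 0$. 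Combined with the nonnegative discrete contribution, this forces $\hat f \equiv 0$, hence $f \equiv 0$. The main obstacle I anticipate is technical rather than conceptual: the Fubini/Plancherel manipulation and the integrability of the trace integrand near $\rho = 0$ must be justified via the continuity of $S_+$ on $\mathbb{R}\setminus\{0\}$ and the decay $S_+(\rho) = o(\rho^{-1})$ from Condition~A$_\pm$; once these are in hand, the completion-of-squares identity above is the decisive step.
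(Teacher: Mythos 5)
Your proposal is correct and follows essentially the same route as the paper: compactness of the integral operator plus the Fredholm alternative, bootstrapping a kernel element into $L_2$, passing to the Fourier transform via Parseval, and deriving positivity from $N_k^+ \ge 0$ and $\| S_+(\rho) \| < 1$. The only difference is the final algebra — the paper takes the real part and uses positivity of $I_m + \tfrac12 (S_+ + S_+^*)$, whereas you symmetrize in $\rho \leftrightarrow -\rho$ and complete the square with $I_m - S_+^* S_+ > 0$, which in fact treats the pairing of $\hat f(\rho)$ with $\hat f^*(-\rho)$ more carefully than the paper's displayed formula.
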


\begin{proof}
One can easily show that for each fixed $x$, the operator
$$
 	(J_x f)(y) = \int_x^{\iy} f(t) M_+(t + y) \, dt, \quad y > x,
$$
is compact in the space of row-vector functions $L((x, \iy); \mathbb{C}^{m, T})$.
So it is sufficient to prove that the homogeneous equation
\begin{equation} \label{homo}
 	f(y) + \int_x^{\iy} f(t) M_+(t + y) \, dt = 0
\end{equation}
has only zero solution. By virtue of \eqref{defM}, \eqref{estR} and \eqref{homo}, the vector function $f(y)$ is bounded for $y > x$.
Hence $f \in L_2((x, \iy); \mathbb C^{m, T})$. So we derive from \eqref{homo}:
$$
 	\int_x^{\iy} f(y) f^*(y) \, dy + \int_x^{\iy} \int_x^{\iy} f(t) M_+(t + y) f^*(y) \, dt \, dy = 0.
$$
Using \eqref{defM}, we obtain
\begin{multline} \label{smeq4}
 	\int_x^{\iy} f(y) f^*(y) \, dy + \sum_{k = 1}^N \int_x^{\iy} \int_x^{\iy} f(t) \exp(-\tau_k t) N_k^+ \exp(-\tau_k y) f^*(y) \, dt \, dy \\ +
 	\frac{1}{2 \pi} \int_{-\iy}^{\iy} \int_x^{\iy} \int_x^{\iy} f(t) \exp(i \rho t) S_+(\rho) \exp(i \rho y) f^*(y) \, dt \, dy \, d \rho = 0.
\end{multline}
Denote 
$$
 	\Phi(\rho) = \int_x^{\iy} f(t) \exp(i \rho t) \, dt.
$$
By Parseval's identity,
$$
 	\int_x^{\iy} f(y) f^*(y) \, dy = \frac{1}{2\pi} \int_{-\iy}^{\iy} \Phi(\rho) \Phi^*(\rho) \, d\rho.
$$
Then the relation \eqref{smeq4} takes the form
\begin{equation} \label{smeq5}
 	\frac{1}{2\pi} \int_{-\iy}^{\iy} \Phi(\rho) (I_m + S(\rho)) \Phi^*(\rho) \, d \rho + \sum_{k = 1}^N \Phi(\rho_k) N_k^+ (\Phi(\rho_k))^* = 0.
\end{equation}
Let 
$$
 	H(\rho) = \frac{1}{2} (S_+(\rho) + S_+^*(\rho)).
$$
Then
$$
 	\mbox{Re}\, \left\{ \int_{-\iy}^{\iy} \Phi(\rho) (I_m + S_+(\rho)) \Phi^*(\rho) \, d \rho \right\} =
 	\int_{-\iy}^{\iy} \Phi(\rho) (I_m + H(\rho)) \Phi^*(\rho) \, d\rho. 
$$
It follows from $\| S_+(\rho) \| < 1$, that $\| H(\rho) \| < 1$, and, consequently, 
$I_m + H(\rho) > 0$ for real $\rho \ne 0$. Hence
$$
 	\int_{-\iy}^{\iy} \Phi(\rho) (I_m + H(\rho)) \Phi^*(\rho) \, d\rho \ge 0.
$$
Note that
$$
  \Phi(\rho_k) N_k^+ (\Phi(\rho_k))^* \ge 0.	
$$
Therefore, the equality \eqref{smeq5} is possible only if 
$$
 	\int_{-\iy}^{\iy} \Phi(\rho) (I_m + H(\rho)) \Phi^*(\rho) \, d\rho = 0.
$$
This relation yields $\Phi(\rho) \equiv 0$, so $f(y) \equiv 0$. Thus, the homogeneous equation \eqref{homo} has a unique solution,
and the unique solvability of \eqref{GLM+} is proved. The proof for \eqref{GLM-} is similar.
\end{proof}

Note that the data $J_+$ and $J_-$ in Theorem~\ref{thm:GLMsol} are not required to be 
scattering data for some particular potential $Q(x)$. However, by virtue of Lemmas~\ref{lem:propS}, \ref{lem:finite}, \ref{lem:Nk},
Corollary~\ref{cor:M}
and other results of Section~2, Conditions A$_\pm$ hold for the scattering data $J_{\pm}$ of equation \eqref{eqv}.

\begin{cor}[uniqueness theorem]  \label{cor:uniq}
The potential $Q$ in equation \eqref{eqv} is uniquely determined by the scattering data $J_+$ (or $J_-$).
\end{cor}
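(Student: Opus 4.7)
The plan is to argue that the scattering data uniquely determine the transformation kernel via the Gelfand–Levitan–Marchenko equation, and that the kernel in turn determines the potential through the reconstruction formula \eqref{recQ}.

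First, I would fix a potential $Q$ satisfying \eqref{xQ} and its right scattering data $J_+ = \{S_+(\rho),\rho_k^2, N_k^+\}$. By the results collected in Section~2, namely Lemma~\ref{lem:propS}, Lemma~\ref{lem:finite}, Lemma~\ref{lem:Nk} and Corollary~\ref{cor:M} (applied to the matrix $M_+$ formed from $S_+$ and the discrete part), the data $J_+$ satisfy Condition~A$_+$. This is the paragraph right after Theorem~\ref{thm:GLMsol}, so I would simply cite it.

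Next, I would form the matrix function $M_+(x)$ by \eqref{defM} directly from $J_+$. By Theorem~\ref{thm:GLM} the kernel $K_+(x,y)$ of the transformation operator in \eqref{transform} satisfies, for each fixed $x$, the Gelfand–Levitan–Marchenko equation \eqref{GLM+} with this $M_+$. By Theorem~\ref{thm:GLMsol}, under Condition~A$_+$ the equation \eqref{GLM+} has a unique solution in $L((x,\iy);\mathbb{C}^{m\times m})$. Therefore $K_+(x,y)$ is uniquely determined by $J_+$ for every $x$ and $y>x$.

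Finally, taking the diagonal value $K_+(x,x)$ and applying the reconstruction formula \eqref{recQ} gives
\[
Q(x) = -2\,\frac{d}{dx} K_+(x,x),
\]
so $Q$ is uniquely determined by $J_+$. The argument for $J_-$ is identical, using \eqref{GLM-} and the corresponding reconstruction formula $Q(x) = 2\,\frac{d}{dx} K_-(x,x)$. There is no genuine obstacle here; the corollary is a direct consequence of Theorems \ref{thm:GLM} and \ref{thm:GLMsol} together with \eqref{recQ}. The only point that deserves a sentence is the verification that the scattering data of \eqref{eqv} do satisfy Condition~A$_\pm$, which is why Theorem~\ref{thm:GLMsol} is applicable.
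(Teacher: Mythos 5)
Your proposal is correct and follows essentially the same route as the paper: the corollary is stated there as an immediate consequence of Theorem~\ref{thm:GLM}, Theorem~\ref{thm:GLMsol} (applicable because the scattering data satisfy Condition~A$_\pm$, as noted right before the corollary), and the reconstruction formula \eqref{recQ}, which is exactly the chain you describe. Nothing is missing.
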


The following algorithm can be used for the solution of the inverse scattering problem.

\smallskip

{\bf Algorithm.} Given the scattering data $J_+ = \{ S_+(\rho), \rho_k^2, N_k^+ \}$.
\begin{enumerate}
\item Construct $M_+(x)$ by formula \eqref{defM}.
\item Solve equation \eqref{GLM+} and find $K_+(x, y)$.
\item Find the potential $Q(x)$ by \eqref{recQ}.
\end{enumerate}

The solution for $J_-$ is similar.

Further we also need the following auxiliary fact.

\begin{lem} \label{lem:Qpm}
Let the collections $J_{\pm}$ satisfy Comditions A$_\pm$, and let $K_{\pm}(x, y)$ be the solutions of the
integral equations \eqref{GLM+} and \eqref{GLM-}. Define the matrix functions $F_{\pm}(x, \rho)$ by \eqref{transform}, 
and the matrix functions $Q_{\pm}(x)$ by \eqref{recQ}. Then for each fixed $a > -\iy$
$$
 	\int_a^{\iy} (1 + |x|) \| Q_{\pm}(\pm x) \| \, dx < \iy,
$$
and
$$
 	-F''_{\pm}(x, \rho) + Q_{\pm}(x) F_{\pm}(x, \rho) = \rho^2 F_{\pm}(x, \rho).
$$
\end{lem}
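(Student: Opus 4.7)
The plan is to carry out the classical inverse step of the Gelfand-Levitan-Marchenko method, adapted to the matrix case; everything will be done for $K_+$ and $Q_+$, since the $K_-$, $Q_-$ case is analogous after the substitution $x \mapsto -x$. The key point is that under Condition A$_+$ the kernel $M_+$ inherits the regularity and weighted decay of $R_+$ on $(a,\infty)$, with the exponential soliton contributions $\sum N_k^+ \exp(-\tau_k x)$ decaying faster than any polynomial. Thus the whole program can be run on the right half-line.

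First I would extract from equation \eqref{GLM+} that $K_+(x,y)$ is jointly absolutely continuous and admits the standard Marchenko-type estimate
$$ \| K_+(x,y) \| \le c(x) \, \sigma_+\!\left( \tfrac{x+y}{2} \right), \quad \sigma_+(u) := \int_u^{\infty} \| M_+(s) \| \, ds, $$
with $c(x)$ locally bounded; this follows by iterating the GLM equation together with the strict bound $\|S_+\|<1$ used in Theorem~\ref{thm:GLMsol}. Differentiating the equation in $x$ and $y$ and using the weighted integrability \eqref{estR} (transferred to $M_+$ via \eqref{defM}), one obtains that $\partial_x K_+$ and $\partial_y K_+$ exist and satisfy a similar weighted estimate. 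Setting $y=x$ in the derivative identity then gives
$$ \int_a^{\infty} (1+|x|) \, \| Q_+(x) \| \, dx \le 2 \int_a^{\infty} (1+|x|) \, \left\| \tfrac{d}{dx} K_+(x,x) \right\| dx < \infty, $$
which proves the first assertion.

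Second, I would verify that $F_+$ defined by \eqref{transform} satisfies $-F''_+ + Q_+ F_+ = \rho^2 F_+$. The natural route is to show that $K_+(x,y)$ obeys the hyperbolic identity
$$ \frac{\partial^2 K_+}{\partial x^2}(x,y) - Q_+(x) K_+(x,y) = \frac{\partial^2 K_+}{\partial y^2}(x,y), \quad y > x, $$
with boundary data $K_+(x,x) = -\tfrac{1}{2}\int_x^{\infty} Q_+(t)\,dt$ (equivalent to \eqref{recQ}) and decay $K_+(x,y)\to 0$ as $y\to\infty$. Given this PDE, one inserts \eqref{transform} into $-F_+''+Q_+F_+-\rho^2 F_+$, differentiates twice in $x$, integrates by parts twice in $t$ using $\partial_t^2 K_+ = \partial_x^2 K_+ - Q_+ K_+$, and collects boundary terms; all terms cancel, leaving zero. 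To establish the PDE itself I would apply $\partial_x^2 - \partial_y^2$ to \eqref{GLM+}: because $M_+(x+y)$ depends only on the sum, the free term drops out, and the remaining integral identity asserts that $U(x,y) := \partial_x^2 K_+ - \partial_y^2 K_+ - Q_+(x) K_+$ satisfies a homogeneous GLM equation in $y$ for each fixed $x$. By Theorem~\ref{thm:GLMsol} the only such solution is $U\equiv 0$, which is exactly the desired identity.

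The main obstacle is the joint regularity of $K_+$ and the justification of the boundary terms when applying $\partial_x^2-\partial_y^2$ to the GLM equation: one needs enough decay of $K_+$ and its first derivatives at $y\to\infty$ to legitimately integrate by parts and to conclude $U\equiv 0$ in the appropriate $L$-class where Theorem~\ref{thm:GLMsol} applies. This part is technical but entirely parallel to the scalar argument in \cite[Chapter 3]{Mar77} and \cite[Section 3.2]{FY01}; the matrix structure enters only through the non-commutativity of products, which is harmless because the transformation formula \eqref{transform} and the GLM equation \eqref{GLM+} are written with $K_+$ acting on the left, so the order of factors is preserved throughout.
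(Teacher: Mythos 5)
Your proposal is correct and follows exactly the route the paper takes: the paper's proof of this lemma is simply a reference to the scalar argument in \cite[Lemma 3.3.1]{FY01} (and \cite[Chapter 3]{Mar77}), which is precisely the program you outline — Marchenko-type estimates on $K_+$ and its derivatives from the GLM equation, the weighted bound on $Q_+$ via $\frac{d}{dx}K_+(x,x)$, and the wave-equation identity for $K_+$ combined with the unique solvability of the homogeneous equation (Theorem~\ref{thm:GLMsol}) to verify the differential equation for $F_+$. No further comment is needed.
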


\begin{proof}
The proof is quite technical and repeats the proof of \cite[Lemma 3.3.1]{FY01}.
\end{proof}

\medskip
{\large \bf 5. Necessary and sufficient conditions}

\setcounter{section}{5}
\setcounter{equation}{0}

\medskip

{\bf 5.1. Main theorem.}
In this section, we formulate and prove the main result on the necessary and sufficient conditions
for the solvability of the inverse scattering problem.

Note that Lemma~\ref{lem:Qpm}, as in the scalar case, gives a ``good'' behavior of the potential on $(a, \iy)$,
if we solve the Gelfand-Levitan-Marchenko equation by $J_+$, and on $(-\iy, a)$, if we use $J_-$. Therefore we need 
the connection between the left and the right scattering data in the necessary and sufficient conditions.
For definiteness, we formulate the following condition for $J_+ = \{ S_+(\rho), \rho_k^2, N_k^+ \}$.

\medskip

{\bf Condition B.} There exists a matrix function $D(\rho)$, such that:

\begin{enumerate}
\item $D(\rho)$ is analytical for $\mbox{Im}\,\rho > 0$ and $\rho D(\rho)$ is continuous for $\mbox{Im}\,\rho \ge 0$.

\item $\det D(\rho) = 0$ only for $\rho = \rho_k$, $k = \overline{1, N}$, and $\Res\limits_{\rho = \rho_k} (D(\rho))^{-1} = C_k N_k^+$
for some $C_k \in \mathbb{C}^{m \times m}$, $\det C_k \ne 0$.

\item $D(\rho) = I_m + O(\rho^{-1})$, as $|\rho| \to \iy$.

\item $(D(\rho))^{-1} = O(1)$, as $\rho \to 0$.

\item $(D^*(\rho))^{-1} (D(\rho))^{-1} = I_m - S^*_+(\rho) S_+(\rho)$ for real $\rho \ne 0$.

\item $\lim\limits_{\rho \to 0} \rho (S_+(\rho) + I_m) D(\rho) = 0_m$, $\rho \in \mathbb R$.

\item Define 
\begin{equation} \label{recS-}
S_-(\rho) = -D^*(\rho) S_+^*(\rho) (D^*(-\rho))^{-1}
\end{equation} 
and $R_-(x)$ by formula \eqref{defR}. The the matrix function
$R_-(x)$ is absolutely continuous and for each fixed $a > -\iy$
$$
   \int_{-\iy}^a \| R_-(x) \| \, dx < \iy, \quad \int_{-\iy}^a (1 + |x|) \| R'_-(x) \| \, dx < \iy.
$$

\end{enumerate}

\begin{remark}
In the scalar case ($m = 1$) the function $D(\rho)$, satisfying Conditions B1--B5, is uniquely determined by 
the following formula (\cite[Theorem 3.5.1]{Mar77}, \cite[Theorem 3.3.1]{FY01}):
$$
 	D(\rho) = \prod_{k = 1}^N \frac{\rho - i \tau_k}{\rho + i \tau_k} \exp(\gamma(\rho)), \quad
 	\gamma(\rho) = -\frac{1}{2 \pi i} \int_{-\iy}^{\iy} \frac{\ln(1 - |S_+(\xi)|^2)}{\xi - \rho} \, d\xi, \quad \mbox{Im}\,\rho > 0. 
$$
\end{remark}

\begin{thm}[Necessary and sufficient conditions] \label{thm:NSC}
For data $J_+ = \{ \{ S_+(\rho)\}_{\rho \in \mathbb R}, \{ \rho_k^2, N_k^+ \}_{k = 1}^N \}$ to be the right scattering data
for a certain potential $Q = Q^*$, satisfying \eqref{xQ}, it is necessary and sufficient to satisfy Condition A$_+$ and 
Condition B.
\end{thm}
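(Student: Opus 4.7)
This direction is essentially a synthesis of Section~2. Condition A$_+$ is obtained by combining Lemma~\ref{lem:propS} (continuity, symmetry, contractivity, decay at infinity of $S_+$), Corollary~\ref{cor:M} (absolute continuity and the integrability of $R_+$ and $R_+'$), Lemma~\ref{lem:finite} (finite discrete spectrum), Lemma~\ref{lem:eigen1} (pure imaginary $\rho_k$), and Lemma~\ref{lem:Nk} ($N_k^+\ge 0$). For Condition~B one takes $D(\rho)$ to be the coefficient appearing in \eqref{CD}. Analyticity and the fact that $\rho D$ is continuous up to the real axis follow from Lemma~\ref{lem:poles} and the paragraph after \eqref{ABrel}; B3 is Lemma~\ref{lem:asymptABCD}; B4 is \eqref{estA0} transported through $D(\rho)=A^*(-\bar\rho)$; B5 is the second identity in \eqref{SA}; B6 is the second limit in \eqref{SAlim}; B2 is obtained by setting $C_k$ so that $C_kN_k^+=R_k^+$, which is permissible because $R_k^+$ is supported on the range of $N_k^+$ by \eqref{weight+} and Lemma~\ref{lem:Nk}; finally, B7 is contained in Lemma~\ref{lem:connect} together with Corollary~\ref{cor:M} applied to the left data.

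\textbf{Sufficiency, stage one.} Starting from $J_+$, I form $M_+(x)$ by \eqref{defM} and, using Condition A$_+$, invoke Theorem~\ref{thm:GLMsol} to solve \eqref{GLM+} for a unique $K_+(x,y)$. Then \eqref{recQ} yields a matrix $Q_+(x)$, and Lemma~\ref{lem:Qpm} guarantees both that $Q_+$ satisfies the integrability condition \eqref{xQ} on each interval $(a,\infty)$ and that the function $F_+(x,\rho)$ defined by the right-hand formula of \eqref{transform} solves $-F_+''+Q_+F_+=\rho^2F_+$.

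\textbf{Sufficiency, stage two.} Condition~B now allows me to manufacture the left scattering data $J_-=\{S_-(\rho),\rho_k^2,N_k^-\}$: take $S_-$ from \eqref{recS-} and set $N_k^-:=R_k^+(N_k^+)^{-1}(R_k^+)^*$ as in \eqref{Nkconnect}, with $R_k^+=C_kN_k^+$ supplied by B2 and the pseudo-inverse interpreted as in Lemma~\ref{lem:connect}. Using B5 and the symmetry $D^*(\rho)=D(-\bar\rho)$ I verify $\|S_-(\rho)\|<1$ and $S_-^*(\rho)=S_-(-\rho)$; B3 and B6 produce the decay of $S_-$ at $\infty$ and the control at $\rho=0$; the integrability required of $R_-$ and $R_-'$ on left half-lines is exactly B7; and $N_k^-\ge 0$ is immediate from its defining formula. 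Hence $J_-$ meets Condition~A$_-$, so Theorem~\ref{thm:GLMsol} produces $K_-(x,y)$, and \eqref{recQ} together with Lemma~\ref{lem:Qpm} yields $Q_-(x)$ satisfying \eqref{xQ} on every left half-line $(-\infty,a)$, with associated $F_-(x,\rho)$ given by the second line of \eqref{transform}.

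\textbf{Sufficiency, stage three, and main obstacle.} The decisive step is to prove $Q_+\equiv Q_-$, so that the two partial potentials fuse into a single $Q$ obeying \eqref{xQ} on the whole line. The plan is to set $A(\rho):=D^*(-\bar\rho)$, $B(\rho):=S_-(\rho)A(\rho)$, and to establish the matrix identity $F_+(x,\rho)=F_-(x,-\rho)A(\rho)+F_-(x,\rho)B(\rho)$ for real $\rho\ne 0$. One substitutes the representations \eqref{transform} on both sides, rewrites the difference as a Fourier integral in a single exponential variable, and separates the contributions from $\rho\in\mathbb R$ and from the poles $\rho_k$ via contour integration. Condition~B5 converts the continuous-spectrum contribution into the convolution structure of $M_-$, while B2 (the residue compatibility $\Res(D^{-1})=C_kN_k^+$) turns the residue sum into the discrete part of $M_-$; the resulting equation is precisely \eqref{GLM-} evaluated on $K_-$, hence holds. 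Because the identity makes $F_+$ a linear combination of $F_-(\cdot,\pm\rho)$, and because both sides solve second-order matrix Schrödinger equations with potentials $Q_+$ and $Q_-$ respectively, a term-by-term comparison (or equivalently, subtraction and the vanishing at $+\infty$ of the asymptotics in \eqref{Jostasymptx}) forces $Q_+\equiv Q_-$. Once $Q:=Q_+=Q_-$ is secured, the functions $F_\pm$ coincide with the true Jost solutions of \eqref{eqv}, and the scattering coefficients recomputed from \eqref{wronA}, \eqref{wronB}, \eqref{defS} together with the weight matrices from \eqref{defRes}--\eqref{weight+} reproduce $J_+$ exactly. The genuinely hard point is the gluing identity of this stage, since it is the only place where the continuous-spectrum constraint B5 and the discrete compatibility B2 must act together; everything else is either bookkeeping of Section~2 or a direct appeal to Theorem~\ref{thm:GLMsol} and Lemma~\ref{lem:Qpm}.
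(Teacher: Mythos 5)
Your necessity argument and stages one and two of the sufficiency argument follow the paper closely and are fine. The gap is exactly at the place you flag as the hard point, stage three, and the mechanism you propose there does not work as described. You claim that after substituting the representations \eqref{transform} into the gluing identity $F_+(x,\rho)=F_-(x,-\rho)A(\rho)+F_-(x,\rho)B(\rho)$ and doing a Fourier/contour-integration separation, ``the resulting equation is precisely \eqref{GLM-} evaluated on $K_-$, hence holds.'' This conflates the derivation of Section~3 (where the scattering relations \eqref{AB}, \eqref{CD} are \emph{known} because $F_\pm$ are Jost solutions of one and the same equation) with the sufficiency direction, where $K_+$ and $K_-$ are obtained from two \emph{independent} integral equations and a priori have nothing to do with each other: the left-hand side of your identity is built from $K_+$ and the data $J_+$, the right-hand side from $K_-$ and the data $J_-$, and the whole difficulty is to prove that these two constructions are compatible. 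Reducing the identity to \eqref{GLM-} would require already knowing that the function produced by the $K_+$-side Fourier manipulation coincides with $F_-(x,\rho)$; that is precisely the statement to be proven, so the argument is circular as sketched. In the paper this is the content of Lemma~\ref{lem:tech}: one first shows only the one-sided relation $F_+(x,\rho)S_+(\rho)+F_+(x,-\rho)=H_-(x,\rho)(D(\rho))^{-1}$ with an auxiliary $H_-(x,\rho)$ given by \eqref{defH-}, proves that its poles at $\rho_k$ cancel thanks to B2, controls $\rho\to 0$ via B4 and B6 and $|\rho|\to\infty$ via B3, builds the combinations $G(x,\rho)$ and $\tilde G(x,\rho)$, kills them by Liouville-type arguments, and finally shows $P(x,\rho)=H_-(x,\rho)(F_-(x,\rho))^{-1}\equiv I_m$ for $x<-a$. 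None of these analytic ingredients (in particular the role of B4 and B6 at the singular point $\rho=0$, and the invertibility of $F_-(x,\rho)$ for large negative $x$) appears in your sketch, and without them the gluing identity is unsupported.

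A second, smaller inaccuracy: even in the paper the relations \eqref{SAD1}--\eqref{FRN} are obtained only for $|x|>a$, not for all $x$, so the conclusion at that stage is merely $Q_+=Q_-$ for $x<-a$ (and symmetrically for $x>a$), which suffices to show $Q_+$ satisfies \eqref{xQ} on the whole line and that $J_\pm$ are the scattering data of $Q_+$ (and of $Q_-$). The global identity $Q_+\equiv Q_-$ is then deduced from the uniqueness theorem (Corollary~\ref{cor:uniq}), not from a pointwise term-by-term comparison of the gluing identity on all of $\mathbb R$ as you assert. If you intend to prove the identity for all $x$ directly, you are assuming the conclusion; if you prove it only for large $|x|$, you still need the uniqueness step, which your write-up omits.
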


The necessity part of Theorem~\ref{thm:NSC} was proved in Sections~2 and 4. Let us prove the sufficiency part.
Let the data $J_+$ satisfy Condition A$_+$ and Condition B. Construct $A(\rho) := D^*(-\bar \rho)$, 
$R_k^+ = \Res\limits_{\rho = \rho_k} (D(\rho))^{-1}$, $R_k^- := \Res\limits_{\rho = \rho_k} (A(\rho))^{-1} = -(R_k^+)^*$  
and $N_k^-$ by formula \eqref{Nkconnect}. 

\begin{lem}
The data $J_- = \{ S_-(\rho), \rho_k^2, N_k^- \}$, constructed above, satisfy Condition A$_-$. 
\end{lem}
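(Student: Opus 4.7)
The plan is to verify each clause of Condition A$_-$ for the constructed data $J_-=\{S_-(\rho),\rho_k^2,N_k^-\}$ using A$_+$ and B together with the formulas \eqref{recS-} and \eqref{Nkconnect}. Most of the verification reduces to matrix-algebraic identities derived from the definition of $S_-$ combined with item B5 applied at both $\rho$ and $-\rho$ and with the symmetry $S_+^*(\rho)=S_+(-\rho)$ from A$_+$; the only point not covered directly by B will be the $L_2$-membership of $R_-$, which I will treat by Plancherel.

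First I would handle the pointwise properties of $S_-(\rho)$ on $\mathbb{R}\setminus\{0\}$. Continuity is immediate from \eqref{recS-}, since B1 gives continuity of $D$ off the eigenvalues, A$_+$ gives continuity of $S_+$, and B5 together with $\|S_+\|<1$ forces $D(\pm\rho)$ to be invertible for real $\rho\ne 0$. The decay $S_-(\rho)=o(\rho^{-1})$ as $|\rho|\to\infty$ follows from B3 (so $D(\pm\rho)$ and $(D(\mp\rho))^{-1}$ are bounded and tend to $I_m$) combined with $S_+(\rho)=o(\rho^{-1})$. For the contraction $\|S_-(\rho)\|<1$, B5 yields $(D(\rho)D^*(\rho))^{-1}=I_m-S_+^*(\rho)S_+(\rho)$ and, using $S_+^*(\rho)=S_+(-\rho)$, also $(D(-\rho)D^*(-\rho))^{-1}=I_m-S_+(\rho)S_+^*(\rho)$. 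Inserting these into $S_-(\rho)S_-^*(\rho)$ and using the elementary identity $S_+^*(I_m-S_+S_+^*)=(I_m-S_+^*S_+)S_+^*$ gives, after cancellation, $I_m-S_-S_-^*=(D^*D)^{-1}>0$; in particular $\|S_-\|_{L_\infty(\mathbb R)}\le 1$. The symmetry $S_-^*(\rho)=S_-(-\rho)$ then reduces, after clearing $D$-factors from \eqref{recS-}, to $S_+(\rho)D(\rho)D^*(\rho)=D(-\rho)D^*(-\rho)S_+(\rho)$, which by B5 is equivalent to $S_+(I_m-S_+^*S_+)^{-1}=(I_m-S_+S_+^*)^{-1}S_+$; this in turn follows from the trivial intertwining $(I_m-S_+S_+^*)S_+=S_+(I_m-S_+^*S_+)$.

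It remains to check $R_-$ and the weight matrices. The Hermiticity $R_-(x)=R_-^*(x)$ follows from the symmetry $S_-^*(\rho)=S_-(-\rho)$ by the change of variables $\rho\mapsto-\rho$ in \eqref{defR}; absolute continuity and the two half-line integrability estimates are supplied verbatim by B7. For $R_-\in L_2(\mathbb R;\mathbb C^{m\times m})$ I would invoke Plancherel: since $\|S_-(\rho)\|\le 1$ on $\mathbb R$ (even at $\rho=0$, taken as a null set) and $S_-(\rho)=o(\rho^{-1})$ at infinity, one has $S_-\in L_\infty\cap L_2$, whence $R_-\in L_2$. Finally $N_k^-=R_k^+(N_k^+)^{-1}(R_k^+)^*$ is manifestly Hermitian and positive semidefinite since $(N_k^+)^{-1}\ge 0$ under the convention fixed in Lemma~\ref{lem:connect}, and $\rho_k=i\tau_k$ with $\tau_k>0$ is inherited from A$_+$. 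I do not foresee a major obstacle; the only real care required is the bookkeeping of noncommuting matrix factors when applying B5 at $\pm\rho$, and the use of the pointwise bound $\|S_-\|\le 1$ rather than any finer information near $\rho=0$ to secure the $L_2$ statement.
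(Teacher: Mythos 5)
Your proposal is correct and follows essentially the same route as the paper: continuity and the $o(\rho^{-1})$ decay from B1, B3 and A$_+$, the symmetry $S_-^*(\rho)=S_-(-\rho)$ and the strict contraction via B5 at $\pm\rho$ combined with the intertwining identity $(I_m-S_+S_+^*)S_+=S_+(I_m-S_+^*S_+)$, then $R_-\in L_2$ by Plancherel, Hermiticity of $R_-$ from the symmetry, the estimates from B7, and $N_k^-=(N_k^-)^*\ge 0$ from \eqref{Nkconnect} and A$_+$. The only differences are cosmetic (you bound $S_-S_-^*$ getting $I_m-(D^*D)^{-1}$, while the paper bounds $S_-^*S_-$ getting $I_m-(A^*)^{-1}A^{-1}$, and you prove the norm bound before the symmetry), so there is nothing of substance to add.
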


\begin{proof}
It follows from \eqref{recS-}, Conditions B1, B3 and A$_+$, that the matrix function
$S_-(\rho)$ is continuous for real $\rho \ne 0$ and $S_-(\rho) = o(\rho^{-1})$ as $|\rho| \to \iy$.

Using the relation $S_+^*(\rho) = S_+(-\rho)$ and Condition B5, we derive
$$
 	(I_m - S_+(\rho) S_+^*(\rho)) S_+(\rho) = S_+(\rho) (I_m - S_+^*(\rho) S_+(\rho)),
$$
$$
 	(D^*(-\rho))^{-1} (D(-\rho))^{-1} S_+(\rho) = S_+^*(-\rho) (D^*(\rho))^{-1} (D(\rho))^{-1}.
$$
Taking \eqref{recS-} into account, we obtain
\begin{equation} \label{smeqS-}
 	S_-^*(\rho) = - (D(-\rho))^{-1} S_+(\rho) D(\rho) = - D^*(-\rho) S_+^*(-\rho) (D^*(\rho))^{-1} = S_-(-\rho),
\end{equation}
for real $\rho \ne 0$.

Furthermore,
\begin{multline*}
 	S_-^*(\rho) S_-(\rho) = D^*(-\rho) S_+^*(-\rho) S_+(-\rho) (D^*(-\rho))^{-1} \\ 
 	= D^*(-\rho) \left\{ I_m - (D^*(-\rho))^{-1} (D(-\rho))^{-1} \right \} (D^*(-\rho))^{-1} = I_m - (D(-\rho))^{-1} (D^*(-\rho))^{-1}.
\end{multline*}
Since $A(\rho) := D^*(-\bar \rho)$, we have
\begin{equation*}
S_-^*(\rho) S_-(\rho) = I_m - (A^*(\rho))^{-1} (A(\rho))^{-1}. 
\end{equation*}
Consequently, $\| S_-(\rho) \| < 1$ for real $\rho \ne 0$.

Since $S_-(\rho) \in L_2((-\iy, \iy); \mathbb{C}^{m \times m})$, the Fourier transform $R_-(x)$ also belongs to $L_2((-\iy, \iy); \mathbb{C}^{m \times m})$. 
The relation $R_-^*(x) = R_-(x)$ follows from \eqref{smeqS-}.
The estimates \eqref{estR} follow from Condition B7.

Finally, $(N_k^-)^* = N_k^- \ge 0$ follows from \eqref{Nkconnect} and Condition A$_+$.
\end{proof}

Thus, the data $J_{\pm}$ satisfy Condition A$_{\pm}$. By Theorem~\ref{thm:GLMsol}, the Gelfand-Levitan-Marchenko equations
\eqref{GLM+}, \eqref{GLM-} have unique solutions $K_{\pm}(x, y)$. Then one can construct matrix functions
$F_{\pm}(x, \rho)$ by \eqref{transform} and $Q_{\pm}(x)$ by \eqref{recQ}, satisfying the assertion of Lemma~\ref{lem:Qpm}.
Our next goal is to prove that $Q_+(x) \equiv Q_-(x)$ and $J_{\pm}$ are their scattering data.

\begin{lem} \label{lem:tech}
The following relations hold
\begin{equation} \label{SAD1}
   F_-(x, \rho) S_-(\rho) + F_-(x, -\rho) = F_+(x, \rho) (A(\rho))^{-1}, 
\end{equation}
\begin{equation} \label{SAD2}
   F_+(x, \rho) S_+(\rho) + F_+(x, -\rho) = F_-(x, \rho) (D(\rho))^{-1},	
\end{equation}
\begin{equation} \label{FRN}
  F_+(x, \rho_k) R_k^- = F_-(x, \rho_k) i N_k^-, \quad F_-(x, \rho_k) R_k^+ = F_+(x, \rho_k) i N_k^+, \quad k = \overline{1, N},
\end{equation}
for $x < -a$ and $x > a$, where $a$ is sufficiently large.
\end{lem}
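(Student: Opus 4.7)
The plan is to invert the Fourier-analytic derivation of Theorem~\ref{thm:GLM}. First I would substitute $M_+(z) = R_+(z) + \sum_k N_k^+ e^{-\tau_k z}$ and $R_+(z) = (2\pi)^{-1}\int S_+(\rho) e^{i\rho z}\,d\rho$ into \eqref{GLM+}; interchanging orders of integration (legal by the $L^1$ decay of $K_+(x,\cdot)$ guaranteed via Lemma~\ref{lem:Qpm} and Corollary~\ref{cor:M}) and recognising the Fourier integrals via \eqref{transform}, the GLM equation rewrites as
\[
K_+(x,y) + \frac{1}{2\pi}\int_{-\infty}^{\infty} F_+(x,\rho) S_+(\rho) e^{i\rho y}\,d\rho + \sum_{k=1}^{N} F_+(x,\rho_k) N_k^+ e^{-\tau_k y} = 0, \quad y > x.
\]
Setting $L_+(x,\rho) := F_+(x,\rho) S_+(\rho) + F_+(x,-\rho)$ and adding the elementary inversion identity $\frac{1}{2\pi}\int F_+(x,-\rho) e^{i\rho y}\,d\rho = K_+(x,y)$ (valid for $y > x$ by Fourier inversion of \eqref{transform}) produces
\[
\frac{1}{2\pi}\int_{-\infty}^{\infty} L_+(x,\rho) e^{i\rho y}\,d\rho = -\sum_{k=1}^{N} F_+(x,\rho_k) N_k^+ e^{-\tau_k y}, \quad y > x. \quad (\dagger)
\]

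Second, I would exploit the analytic structure of the candidate right-hand side $F_-(x,\rho)(D(\rho))^{-1}$ of \eqref{SAD2}. By Conditions B1--B4 combined with the analyticity of $F_-$ inherited from \eqref{transform} (Lemma~\ref{lem:Qpm}), this matrix function is meromorphic in $\overline{\mathbb C^+}$ with only simple poles at the $\rho_k$ (residue $F_-(x,\rho_k) R_k^+$), and it satisfies $F_-(x,\rho)(D(\rho))^{-1} = e^{-i\rho x}(I_m + O(\rho^{-1}))$ as $|\rho| \to \infty$. For $y > x$ the factor $e^{i\rho(y-x)}$ decays on the upper semicircle, so Jordan's lemma and the residue theorem give
\[
\frac{1}{2\pi}\int_{-\infty}^{\infty} F_-(x,\rho)(D(\rho))^{-1} e^{i\rho y}\,d\rho = i\sum_{k=1}^{N} F_-(x,\rho_k) R_k^+ e^{-\tau_k y}, \quad y > x. \quad (\ddagger)
\]

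The main obstacle is to pass from $(\dagger),(\ddagger)$ to the pointwise identity \eqref{SAD2} and the residue identity \eqref{FRN} simultaneously. The strategy is to run the entire computation in parallel on the opposite side starting from \eqref{GLM-} and the candidate $F_+(x,\rho)(A(\rho))^{-1}$, producing analogous Fourier identities valid on $y < x$ for $x < -a$. Setting $\Psi_+(x,\rho) := L_+(x,\rho) - F_-(x,\rho)(D(\rho))^{-1}$ and subtracting $(\ddagger)$ from $(\dagger)$ gives
\[
\frac{1}{2\pi}\int \Psi_+(x,\rho) e^{i\rho y}\,d\rho = -\sum_{k}\bigl[F_+(x,\rho_k) N_k^+ + iF_-(x,\rho_k) R_k^+\bigr] e^{-\tau_k y}, \quad y > x,
\]
and likewise for $\Psi_-(x,\rho) := L_-(x,\rho) - F_+(x,\rho)(A(\rho))^{-1}$ on $y < x$, with $L_-(x,\rho) := F_-(x,\rho) S_-(\rho) + F_-(x,-\rho)$. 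I would then use the algebraic links built into Condition~B, namely $A(\rho) = D^*(-\bar\rho)$, $R_k^- = -(R_k^+)^*$ and \eqref{Nkconnect}, to show that the two systems of discrete coefficients are compatible only when each bracket above vanishes, yielding $F_-(x,\rho_k) R_k^+ = iF_+(x,\rho_k) N_k^+$ and $F_+(x,\rho_k) R_k^- = iF_-(x,\rho_k) N_k^-$, which is \eqref{FRN}. With the discrete sums eliminated, the Fourier inverses of $\Psi_{\pm}$ vanish on the corresponding half-lines; combined with the structure of $\Psi_{\pm}$ as boundary values of matrix functions with controlled behaviour in complementary half-planes, this forces $\Psi_{\pm} \equiv 0$ on the real line, which are exactly \eqref{SAD1} and \eqref{SAD2}.
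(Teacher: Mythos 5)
Your Step 1 is essentially the paper's starting point: rewriting \eqref{GLM+} through the transformation operator gives exactly the identity $F_+(x,\rho)S_+(\rho)+F_+(x,-\rho)=H_-(x,\rho)(D(\rho))^{-1}$ with an explicitly known matrix function $H_-(x,\rho)$ (analytic in the upper half-plane after the pole cancellation $N_k^+D(\rho_k)=0_m$ from Condition B2, with $\Res_{\rho=\rho_k}H_-(x,\rho)(D(\rho))^{-1}=F_+(x,\rho_k)\,i N_k^+$ and $H_-(x,\rho)e^{i\rho x}\to I_m$), and your $(\dagger)$, $(\ddagger)$ are Fourier/residue reformulations of this. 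The genuine gap is in the step where you claim that comparing the two half-line relations and invoking $A(\rho)=D^*(-\bar\rho)$, $R_k^-=-(R_k^+)^*$ and \eqref{Nkconnect} shows the discrete systems are ``compatible only when each bracket vanishes.'' That bracket, $F_+(x,\rho_k)N_k^+ + iF_-(x,\rho_k)R_k^+$, vanishing is precisely \eqref{FRN}, i.e.\ precisely the assertion that $H_-(x,\cdot)$ may be replaced by $F_-(x,\cdot)$, and nothing in $(\dagger)$ and $(\ddagger)$ forces it: the left-hand side of your subtracted identity is the half-line Fourier transform of $\Psi_+$, which is an unknown function (the part of the transform supported on $y<x$ is completely uncontrolled, encoded in the paper's term $\int_{-\iy}^x\Phi(x,y)e^{-i\rho y}dy$), so no linear-independence or matching argument on the exponentials $e^{-\tau_k y}$ applies. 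Likewise your closing claim that ``the structure of $\Psi_\pm$ in complementary half-planes forces $\Psi_\pm\equiv 0$'' is circular: eliminating the discrete sums already presupposes \eqref{FRN}, and the analyticity needed for a Hardy-space/Liouville conclusion is exactly what has to be constructed.

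What actually closes the gap in the paper is a three-fold Liouville argument that your outline does not contain. One forms $G(x,\rho)=H_-(x,-\rho)\bar F_-(x,\rho)-H_-(x,\rho)\bar F_-(x,-\rho)$ and checks that its residues at $\rho_k$ cancel \emph{because of} \eqref{Nkconnect} together with the residue identities for $H_\pm$ (this is the precise place where the compatibility of the discrete data enters), then uses the behaviour at $\rho=0$ (Conditions B4, B6) and at infinity plus the oddness $G(x,\rho)=-G(x,-\rho)$ to conclude $G\equiv 0_m$; an analogous function $\tilde G$ gives the second symmetry $F_-(x,-\rho)\bar F_-(x,\rho)=F_-(x,\rho)\bar F_-(x,-\rho)$; finally $P(x,\rho)=H_-(x,\rho)(F_-(x,\rho))^{-1}$ is shown to satisfy $P(x,\rho)=P(x,-\rho)$ and, by a third Liouville argument, $P\equiv I_m$, i.e.\ $H_-\equiv F_-$. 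This last step requires $\det F_-(x,\rho)\neq 0$, which is why the lemma is stated only for $x<-a$ (and symmetrically $x>a$) -- a restriction your proposal neither uses nor explains. Note also that at this stage $F_+$ and $F_-$ satisfy equations with a priori different potentials $Q_+$ and $Q_-$, so no argument may treat them as solutions of a common equation; the paper's detour through $H_\pm$ is designed exactly to avoid that, whereas your ``compatibility'' step implicitly assumes the conclusion. Minor additional points: the bound used to justify the interchanges should come from Conditions A$_+$ and B7 rather than Corollary~\ref{cor:M} (which concerns data that are already scattering data), and the singular behaviour of $(D(\rho))^{-1}$ and $S_\pm$ at $\rho=0$ must be handled via B4 and B6 before any contour deformation through the origin.
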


\begin{proof}
Divide the proof into three steps.

{\bf Step 1.} Denote
$$
 	\Phi(x, y) = R_+(x + y) + \int_x^{\iy} K_+(x, t) R_+(t + y) \, dt.
$$
For each fixed $x$, $\Phi(x, y) \in L_2((-\iy, \iy); \mathbb{C}^{m \times m})$.
It follows from \eqref{transform} and \eqref{inverseR}, that
\begin{multline} \label{smeq6}
F_+(x, \rho) S_+(\rho) = \left( \exp(i \rho x) \, I_m + \int_x^{\iy} K_+(x, t) \exp(i \rho t) \, dt \right) \cdot
\int_{-\iy}^{\iy} R_+(\xi) \exp(-i\rho\xi) \, d\xi \\ 
= \int_{-\iy}^{\iy} R_+(x + y) \exp(-i\rho y)\, dy + \int_x^{\iy} K_+(x, t) \int_{-\iy}^{\iy} R_+(t + y) \exp(-i\rho y)\, dy \, dt \\ =
\int_{-\iy}^{\iy} \Phi(x, y) \exp(- i \rho y) \, dy.
\end{multline}
On the other hand, \eqref{GLM+}, \eqref{defM} and \eqref{transform} imply
$$
 	\Phi(x, y) = -K_+(x, y) - \sum_{k = 1}^N \exp(-\tau_k y) F_+(x, \rho_k) N_k^+, \quad x < y.
$$
Therefore
\begin{multline} \label{smeq7}
\int_{-\iy}^{\iy} \Phi(x, y) \exp(- i \rho y) \, dy = \int_{-\iy}^x \Phi(x, y) \exp(-i \rho y) \, dy + 
\exp(- i \rho x) \, I_m - F_+(x, -\rho) \\ - \sum_{k = 1}^N \exp(- i\rho x) \frac{\exp(-\tau_k x)}{\tau_k + i \rho} F_+(x, \rho_k) N_k^+.
\end{multline}
Combining \eqref{smeq6} and \eqref{smeq7}, we obtain
\begin{equation} \label{smeqH}
 	F_+(x, \rho) S_+(\rho) + F_+(x, -\rho) = H_-(x, \rho) (D(\rho))^{-1},
\end{equation}
where
\begin{multline} \label{defH-}
   H_-(x, \rho) = \biggl[ \int_{-\iy}^x \Phi(x, y) \exp(-i\rho y)\, dy + \exp(-i\rho x) \, I_m \\ - 
   \sum_{k = 1}^N  \frac{\exp(-(i \rho + \tau_k) x)}{\tau_k + i \rho} F_+(x, \rho_k) N_k^+\biggr] D(\rho).
\end{multline}

Rewrite \eqref{smeqH} in the form
$$
 	H_-(x, \rho) = (F_+(x, \rho) S_+(\rho) + F_+(x, -\rho)) D(\rho).
$$
Clearly, $H_-(x, \rho)$ is continuous for real $\rho \ne 0$. Moreover, it follows from Condition B6, that
\begin{equation} \label{limH}
 	\lim_{\rho \to 0} \rho H_-(x, \rho) = 0_m
\end{equation}
for each fixed $x$, so $\rho H_-(x, \rho)$ is continuous for $\rho \in \mathbb{R}$. 

In view of \eqref{defH-}, the matrix function $H_-(x, \rho)$ is analytic in the upper half-plane, except for the poles $\rho = \rho_k$.
But
$$
 	\Res_{\rho = \rho_k} H_-(x, \rho) = i F_+(x, \rho_k) N_k^+ D(\rho_k).
$$
According to Codition B2, $\Res\limits_{\rho = \rho_k} (D(\rho))^{-1} = C_k N_k^+$, so
$N_k^+ D(\rho_k) = 0_m$. Hence, $\rho H_-(x, \rho)$ is analytic for $\mbox{Im}\,\rho > 0$ and
continuous for $\mbox{Im} \, \rho \ge 0$, and \eqref{limH} is valid for $\mbox{Im} \, \rho \ge 0$.

Using \eqref{defH-}, we get
\begin{equation} \label{ResH-}
 	\Res_{\rho = \rho_k} H_-(x, \rho) (D(\rho))^{-1} = H_-(x, \rho_k) R_k^+ =  F_+(x, \rho_k) i N_k^+,
\end{equation}
\begin{equation} \label{limHinf}
 	\lim_{|\rho| \to \iy} H_-(x, \rho) \exp(i \rho x) = I_m, \quad \mbox{Im}\,\rho \ge 0.
\end{equation}

Similarly one can show that
\begin{equation} \label{smeqH+}
 	F_-(x, \rho) S_-(\rho) + F_-(x, -\rho) = H_+(x, \rho) (A(\rho))^{-1},
\end{equation}
where the matrix function $H_+(x, \rho)$ is analytic for $\mbox{Im}\,\rho > 0$, 
$\rho H(x, \rho)$ is continuous for $\mbox{Im}\,\rho \ge 0$, and
\begin{equation} \label{ResH+}
 	H_+(x, \rho_k) R_k^- = F_-(x, \rho_k) i N_k^-,
\end{equation}
\begin{equation} \label{limH+}
 	\lim_{|\rho| \to \iy} H_+(x, \rho) \exp(-i\rho x) = I_m, \quad \lim_{\rho \to 0} \rho H_+(x, \rho) = 0_m, \quad \mbox{Im}\,\rho \ge 0.
\end{equation}
Note that for the proof, one needs the relation
$$
  \lim_{\rho \to 0} \rho (S_-(\rho) - I_m) A(\rho) = 0,
$$
which can be easily derived from Condition B6.

{\bf Step 2.}
It follows from \eqref{smeqH}, that
\begin{equation} \label{smeqH-}
 	F_+(x, -\rho) S_+(-\rho) + F_+(x, \rho) = H_-(x, -\rho) (D(-\rho))^{-1}. 
\end{equation}
Substitute $F_+(x, -\rho)$ from \eqref{smeqH} into \eqref{smeqH-}:
$$
 	F_+(x, \rho) (I_m - S_+(\rho) S_+(-\rho)) =  H_-(x, -\rho) (D(-\rho))^{-1} -
 	H_-(x, \rho) (D(\rho))^{-1} S_+(-\rho)
$$
Using Condition B6, \eqref{smeqS-} and the relation $A(\rho) = D^*(-\rho)$, we obtain
\begin{equation} \label{smeqH2}
  F_+(x, \rho) (A(\rho))^{-1} = H_-(x, -\rho) + H_-(x, \rho) S_-(\rho). 
\end{equation}

Put $\bar H_+(x, \rho) := H_+^*(x, -\rho)$. Taking \eqref{barF} and \eqref{smeqS-} into account, 
we obtain from \eqref{smeqH+}:
\begin{equation} \label{smeqH3}
  	(D(\rho))^{-1} \bar H_+(x, \rho) = \bar F_-(x, -\rho) + S_-(\rho) \bar F_-(x, \rho). 
\end{equation}
It follows from \eqref{smeqH2} and \eqref{smeqH3}, that
\begin{multline} \label{defG}
  H_-(x, -\rho) \bar F_-(x, \rho) - H_-(x, \rho) \bar F_-(x, -\rho) \\ = 
  F_+(x, \rho) (A(\rho))^{-1} \bar F_-(x, \rho) - H_-(x, \rho) (D(\rho))^{-1} \bar H_+(x, \rho) =: G(x, \rho).
\end{multline}
For each fixed $x$, the matrix function $G(x, \rho)$ is meromorphic in the upper half-plane with 
the simple poles $\rho_k$. Using \eqref{defG}, we derive
\begin{equation} \label{smRes1}
 	\Res_{\rho = \rho_k} G(x, \rho_k) = F_+(x, \rho_k) R_k^- \bar F_-(x, \rho_k) - H_-(x, \rho_k) R_k^+ \bar H_+(x, \rho_k).
\end{equation}
The relation \eqref{ResH+} yields
\begin{equation} \label{smRes2}
 	R_k^+ \bar H_+(x, \rho_k) = i N_k^- \bar F_-(x, \rho_k).
\end{equation}
Multiplying the relation \eqref{ResH-} by $i (N_k^+)^{-1} (R_k^+)^*$ ($(N_k^+)^{-1}$ wa defined in Lemma~\ref{lem:connect})
and using \eqref{Nkconnect}, we obtain
\begin{equation} \label{smRes3}
 	F_+(x, \rho_k) R_k^- = H_-(x, \rho_k) i N_k^-.
\end{equation}
Substituting \eqref{smRes2} and \eqref{smRes3} into \eqref{smRes1}, we get
$$
   \Res_{\rho = \rho_k} G(x, \rho_k) = 0_m,
$$
so the matrix function $G(x, \rho)$ is analytic for $\mbox{Im}\,\rho > 0$.
According to \eqref{defG}, \eqref{limH}, \eqref{limHinf}, \eqref{limH+}, \eqref{transform} and Conditions B1, B4, B5, the matrix function $\rho G(x, \rho)$
is continuous for $\mbox{Im}\,\rho \ge 0$ and
\begin{equation} \label{limG0}
 	\lim_{\rho \to 0} \rho G(x, \rho) = 0_m, 
\end{equation} 	
\begin{equation} \label{limGinf}
 	\lim_{|\rho| \to \iy} G(x, \rho) = 0_m,  
\end{equation}
where $\mbox{Im}\,\rho \ge 0$.

It follows from \eqref{defG}, that
\begin{equation} \label{-G}
  	G(x, \rho) = -G(x, -\rho)
\end{equation}
for real $\rho \ne 0$.
One can continue the matrix function $G(x, \rho)$ to the lower half-plane by formula \eqref{-G}.	
Then $G(x, \rho)$ is analytic in $\mathbb C \backslash \{ 0 \}$. By virtue of \eqref{limG0}, the point $\rho = 0$ 
is a removable singularity, and $G(x, \rho)$ is entire in $\rho$. The relation \eqref{limGinf} holds for all $\rho$,
consequently, by Liuville's theorem, $G(x, \rho) \equiv 0_m$, i.e.
\begin{equation} \label{symH}
H_-(x, -\rho) \bar F_-(x, \rho) = H_-(x, \rho) \bar F_-(x, -\rho), \quad \rho \in \mathbb{R}.
\end{equation}

{\bf Step 3.} 
Similarly to Step 2, starting from \eqref{smeqH+} and \eqref{smeqH3}, one can derive
\begin{multline*}
F_-(x, -\rho) \bar F_-(x, \rho) - F_-(x, \rho) \bar F_-(x, -\rho) = H_+(x, \rho) (A(\rho))^{-1} \bar F_-(x, \rho) \\ 
- F_-(x, \rho) (D(\rho))^{-1} \bar H_+(x, \rho) =: \tilde G(x, \rho).
\end{multline*}
One can show that $\tilde G(x, \rho) \equiv 0_m$ similarly to $G(x, \rho) \equiv 0_m$. Conseqeuntly,
\begin{equation} \label{symF}
 	F_-(x, -\rho) \bar F_-(x, \rho) = F_-(x, \rho) \bar F_-(x, -\rho), \quad \rho \in \mathbb{R}.
\end{equation}
The relations \eqref{symH} and \eqref{symF} together yield
$$
  	P(x, \rho) = P(x, -\rho), \quad P(x, \rho) := H_-(x, \rho) (F_-(x, \rho))^{-1}, \quad \rho \in \mathbb R, 
$$ 
for $x$ such that $\det F_-(x, \pm \rho) \ne 0$. It follows from the properties of the Jost solution and \eqref{limHinf},
that for $x < -a$ (where $a$ is sufficiently large), $P(x, \rho)$ is analytic for $\mbox{Im}\,\rho > 0$ and continuous for 
$\mbox{Im}\,\rho \ge 0$, $\rho \ne 0$,  
and $\lim\limits_{|\rho| \to \iy} P(x, \rho) = I_m$. We continue the matrix function $P(x, \rho)$ to the half-plane
$\mbox{Im}\,\rho < 0$ by the formula $P(x, \rho) = P(x, -\rho)$. In view of \eqref{limH}, the singularity of $P(x, \rho)$ 
is removable at $\rho = 0$. Thus, the matrix function $P(x, \rho)$ is entire and bounded. By Liouville's theorem,
$P(x, \rho) \equiv I_m$, i.e. $H_-(x, \rho) \equiv F_-(x, \rho)$ for $x < -a$.
Now the relations \eqref{SAD1}, \eqref{SAD2}, \eqref{FRN} follow from \eqref{smeqH2}, \eqref{smeqH}, \eqref{ResH-} and \eqref{smRes3}
for $x < -a$. Symmetrically one can prove them for $x > a$, if $a$ is sufficiently large.

\end{proof}

Let us finish the proof of Theorem~\ref{thm:NSC}. It follows from \eqref{SAD2}, that
$$
 	-F''_-(x, \rho) + Q_+(x) F_-(x, \rho) = \rho^2 F_-(x, \rho)
$$
for $x < -a$. Consequently, $Q_+(x) = Q_-(x)$ for $x < -a$, the potential $Q_+(x)$ satisfies the condition \eqref{xQ},
and the Jost solution $\tilde F_-(x, \rho)$ for $Q_+(x)$ equals $F_-(x, \rho)$ for $x < -a$. Construct for $Q_+(x)$ the matrix
functions
$$
   \tilde A(\rho) = -\frac{1}{2 i \rho} \langle \bar F_-(x, \rho), F_+(x, \rho) \rangle, \quad
   \tilde B(\rho) = \frac{1}{2 i \rho} \langle \bar F_-(x, -\rho), F_+(x, \rho) \rangle, \quad x < - a.
$$
Then for $x < -a$ the relation \eqref{AB} holds:
$$
   F_+(x, \rho) = F_-(x, -\rho) \tilde A(\rho) + F_-(x, \rho) \tilde B(\rho).
$$
Comparing this relation with \eqref{SAD1}, we obtain 
$$ 
	\tilde A(\rho) = A(\rho), \quad \tilde B(\rho) = S_-(\rho) A(\rho).
$$
Using these relations and \eqref{FRN}, we conclude that $J_- = \{ S_-(\rho), \rho_k, N_k^- \}$ are the left scattering data
for the potential $Q_+(x)$. Similarly, using \eqref{SAD2}, we prove that $J_+ = \{ S_+(\rho), \rho_k, N_k^+ \}$ 
are the right scattering data for $Q_+(x)$. Symmetrically one can use the relations of Lemma~\ref{lem:tech}
to prove that $J_-$ and $J_+$ are the scattering data for the potential $Q_-(x)$. By virtue of 
the uniqueness theorem (Corollary~\ref{cor:uniq}), $Q_+(x) \equiv Q_-(x)$ and $J_+$, $J_-$ are the scattering data for this potential. 
Theorem~\ref{thm:NSC} is proved.

\medskip

{\bf 5.2. Reflectionless potentials}. Let us consider the case $S(\rho) \equiv 0_m$.

\begin{thm}
For the data $S_+(\rho) \equiv 0_m$, $\rho_k$ and $N_k^+$, $k = \overline{1, N}$, to be the right scattering data
for a certain potential $Q = Q^*$, satisfying \eqref{xQ}, it is necessary and sufficient to satisfy the following conditions:
$\rho_k = i \tau_k$, $\tau_k > 0$, numbers $\rho_k$ are distinct, $N_k^+ = (N_k^+)^* \ge 0$.
\end{thm}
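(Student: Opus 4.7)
Necessity follows directly from Section~2. By Lemmas~\ref{lem:eigen1}, \ref{lem:eigen2} and~\ref{lem:finite}, the eigenvalues of~\eqref{eqv} form a finite set of negative real numbers, so $\rho_k=i\tau_k$ with $\tau_k>0$; by Lemma~\ref{lem:Nk} each $N_k^+$ is Hermitian and nonnegative. Distinctness of the $\rho_k$ is a bookkeeping convention: each distinct eigenvalue is listed once, its multiplicity being encoded in $\rank N_k^+$ (cf.\ the Remark following Lemma~\ref{lem:Nk}).

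For sufficiency, the plan is to invoke Theorem~\ref{thm:NSC} and verify Conditions~A$_+$ and~B. Condition~A$_+$ is immediate: $S_+(\rho)\equiv 0_m$ trivially satisfies the analytic, symmetry and decay requirements, $R_+(x)\equiv 0$ fulfills the integrability estimates vacuously, and $N_k^+\ge 0$ is part of the hypothesis. The real task is to exhibit a matrix function $D(\rho)$ realizing Condition~B.

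I would build $D$ as an inductively defined matrix Blaschke product. Setting $r_k:=\rank N_k^+$, put
$$
D(\rho):=B_N(\rho)\,B_{N-1}(\rho)\cdots B_1(\rho),\qquad
B_k(\rho):=\frac{\rho-i\tau_k}{\rho+i\tau_k}\,P_k+(I_m-P_k),
$$
where $P_k$ is the orthogonal projection onto the $r_k$-dimensional subspace
$$
\mathcal V_k:=B_{k-1}(i\tau_k)\cdots B_1(i\tau_k)\,\mbox{range}(N_k^+).
$$
Each $B_j(i\tau_k)$ with $j<k$ is invertible because $\tau_j\neq\tau_k$, so $\mathcal V_k$ is well-defined and has dimension $r_k$. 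Each $B_k$ is analytic in $\{\mbox{Im}\,\rho>0\}$, satisfies $B_k(\rho)B_k^*(\rho)=I_m$ on~$\mathbb{R}$, and equals $I_m+O(\rho^{-1})$ at infinity; these properties pass to the product~$D$, yielding~B1,~B3 and~B5. For~B4, $D(0)=\prod_k(I_m-2P_k)$ is a product of unitary reflections, hence invertible. Conditions~B6 and~B7 are vacuous: $S_+\equiv 0_m$ forces $S_-\equiv 0_m$ through~\eqref{recS-}, so $R_-\equiv 0_m$, while B6 reduces to $\rho D(\rho)\to 0_m$ as $\rho\to 0$, which is obvious.

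The delicate point is~B2. Since only $B_k^{-1}$ has a pole at $\rho=i\tau_k$, a short computation gives
$$
\Res_{\rho=i\tau_k}(D(\rho))^{-1}=B_1(i\tau_k)^{-1}\cdots B_{k-1}(i\tau_k)^{-1}\cdot(2i\tau_k P_k)\cdot B_{k+1}(i\tau_k)^{-1}\cdots B_N(i\tau_k)^{-1}.
$$
Its range equals $B_1(i\tau_k)^{-1}\cdots B_{k-1}(i\tau_k)^{-1}\,\mbox{range}(P_k)=\mbox{range}(N_k^+)$ by the construction of $\mathcal V_k$, and its rank is $r_k=\rank N_k^+$; hence the residue factors as $C_kN_k^+$ with $\det C_k\neq 0$, giving~B2. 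The main obstacle is precisely this range-matching: letting $P_k$ project straight onto $\mbox{range}(N_k^+)$ would leave the residues twisted by the remaining factors, so the inductive definition of $\mathcal V_k$ is essential. Once Condition~B is confirmed, Theorem~\ref{thm:NSC} produces the required potential.
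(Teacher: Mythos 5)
Your overall route coincides with the paper's: Condition~A$_+$ is trivial for $S_+\equiv 0_m$, and $D(\rho)$ is sought as a product of matrix Blaschke factors with inductively chosen orthogonal projectors, after which Theorem~\ref{thm:NSC} finishes the job; your checks of B1, B3--B7 are fine. The gap is in B2. Equality of \emph{ranges}, $\mathrm{range}\bigl(\Res_{\rho=\rho_k}(D(\rho))^{-1}\bigr)=\mathrm{range}(N_k^+)$, together with equality of ranks, certifies only a factorization $\Res_{\rho=\rho_k}(D(\rho))^{-1}=N_k^+\widetilde C_k$ with an invertible factor on the \emph{right}; Condition B2 requires $C_kN_k^+$ with the invertible factor on the \emph{left}, which is equivalent to equality of \emph{kernels}: $\ker\Res_{\rho=\rho_k}(D(\rho))^{-1}=\ker N_k^+$. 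The left-handed form is not cosmetic — it is what is used later (it yields $N_k^+D(\rho_k)=0_m$ in the proof of Lemma~\ref{lem:tech}), so it cannot be weakened to the right-handed one. With your ordering $D=B_N\cdots B_1$ one has $D^{-1}=B_1^{-1}\cdots B_N^{-1}$ and
\begin{equation*}
\ker\Res_{\rho=i\tau_k}(D(\rho))^{-1}=B_N(i\tau_k)\cdots B_{k+1}(i\tau_k)\,\ker P_k,
\end{equation*}
so the kernel is twisted by the factors with indices \emph{greater} than $k$, which your induction has not yet constrained, whereas your $\mathcal V_k$ is built from $B_1,\dots,B_{k-1}$ — those cancel only in the computation of the range. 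Already for $k=N$ your choice gives $\ker P_N=\bigl(B_{N-1}(i\tau_N)\cdots B_1(i\tau_N)\,\mathrm{range}(N_N^+)\bigr)^{\perp}$, which generically differs from $\ker N_N^+$; only the case $N=1$ survives.

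The repair is exactly the paper's construction: match kernels, and let the induction follow the side of the product adjacent to the pole factor. In your notation this means requiring $\ker P_k=B_{k+1}^{-1}(i\tau_k)\cdots B_N^{-1}(i\tau_k)\,\ker N_k^+$, so one must start with $\ker P_N=\ker N_N^+$ and proceed downward; equivalently, relabel as in the paper, where $U=D^{-1}$ has the factor with $P_1$ at the free end, $\ker P_1=\ker N_1^+$, and then $\ker P_2=\bigl(I_m+\tfrac{2\rho_1}{\rho_2-\rho_1}P_1\bigr)\ker N_2^+$, and so on. Since $N_k^+$ is Hermitian, prescribing the kernel of the orthogonal projector $P_k$ determines it (it is the orthogonal projection onto the complement of that image), the ranks come out right, and the residue then has the same kernel as $N_k^+$, which is precisely the condition $\Res_{\rho=\rho_k}(D(\rho))^{-1}=C_kN_k^+$ with $\det C_k\neq0$.
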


\begin{proof}
The necessity part follows directly from Theorem~\ref{thm:NSC}. In order to prove the sufficiency part, we
have to check Conditions A$_+$ and B.
It is sufficent to construct the matrix function $U(\rho)$,
meromorphic with the simple poles $\rho_k$ and zeros $- \rho_k$, such that
\begin{equation} \label{ResU}
   \Res_{\rho = \rho_k} U(\rho) = C_k N_k^+, \quad k = \overline{1, N}, \, \det C_k \ne 0,
\end{equation}
\begin{equation} \label{smeqU}
 	U(\rho) = I_m + O(\rho^{-1}), \quad |\rho| \to \iy, \quad U^*(\rho) U(\rho) = I_m, \quad \rho \in \mathbb{R}.
\end{equation}
Following \cite{ZMNP}, let us find such function in the form
$$
 	U(\rho) = \Bigl( I_m + \frac{2 \rho_N}{\rho - \rho_N} P_N \Bigr) \dots \Bigl( I_m + \frac{2 \rho_2}{\rho - \rho_2} P_2 \Bigr) \Bigl( I_m + \frac{2 \rho_1}{\rho - \rho_1} P_1 \Bigr),
$$
where $P_k$ are orthogonal projectors. Note that $U(\rho)$ does not have any other poles and zeros except for $\rho_k$ and $-\rho_k$, 
$k = \overline{1, N}$, respectively, and fulfills \eqref{smeqU}. It remains to choose the projectors $P_k$ to fulfill \eqref{ResU}.
For $k = 1$ we have
$$
  \Res_{\rho = \rho_1} U(\rho) = \Bigl( I_m + \frac{2 \rho_N}{\rho_1 - \rho_N} P_N \Bigr) \dots \Bigl( I_m + \frac{2 \rho_2}{\rho_1 - \rho_2} P_2 \Bigr) 2 \rho_1 P_1 = C_1 N_1^+. 
$$
Consequently, $\mbox{Ker}\, P_1 = \mbox{Ker}\, N_1^+$, so take such $P_1$, that $I_m - P_1$ is an orthogonal projector to $\mbox{Ker}\, N_1^+$.
Further,
$$
  \Res_{\rho = \rho_2} U(\rho) = \Bigl( I_m + \frac{2 \rho_N}{\rho_2 - \rho_N} P_N \Bigr) \dots 2 \rho_2 P_2 \Bigl( I_m + \frac{2 \rho_1}{\rho_2 - \rho_1} P_1 \Bigr)  = C_2 N_2^+. 
$$
We have $\mbox{Ker}\, P_2 = \mbox{Ker}\, N_2^+ (I + \frac{2\rho_1}{\rho_2 - \rho_1} P_1)^{-1}$, so make $I_m - P_2$ to be 
a projector to the space $(I_m + \frac{2 \rho_1}{\rho_2 - \rho_1} P_1) \mbox{Ker}\, N_2^+$ and continue the process for $k = 3, \dots, N$.
Clearly, the matrix function $D(\rho) = (U(\rho))^{-1}$ satisfies Condition B, so the sufficiency part is proved.
\end{proof}

\begin{remark}
In the general case, we can search for the matrix function $D(\rho)$ in the form $D(\rho) =  (U(\rho))^{-1} H(\rho)$,
where $U(\rho)$ is the matrix function, constructed above by the discrete scattering data, and $H(\rho)$
satisfies Condition B with $\det H(\rho) \ne 0$, $\mbox{Im}\,\rho \ge 0$, $\rho \ne 0$ instead of B2 and
$$
 	(H^*(\rho))^{-1} (H(\rho))^{-1} = U(\rho) (I - S_+^*(\rho) S_+(\rho)) U^*(\rho)
$$
instead of B5.  
\end{remark}

\medskip

{\bf 5.3. Riemann problem and application to the matrix KdV equation}
We note that Condition B is related to the following matrix Riemann problem.

\smallskip

{\bf Riemann problem (RP).} {\it Given a continuous matrix function $G(\rho)$, $\rho \in \mathbb{R} \backslash \{ 0 \}$,
distinct numbers $\rho_k$, $k = \overline{1, N}$, $\mbox{Im}\, \rho_k > 0$, and matrices $N_k^+$.
Find a matrix function $D(\rho)$ with the following properties:

\begin{enumerate}
\item $D(\rho)$ is analytical for $\mbox{Im}\,\rho > 0$ and $\rho D(\rho)$ is continuous for $\mbox{Im}\,\rho \ge 0$.

\item $\det D(\rho) = 0$ only for $\rho = \rho_k$, $k = \overline{1, N}$, and $\Res\limits_{\rho = \rho_k} (D(\rho))^{-1} = C_k N_k^+$
for some $C_k \in \mathbb{C}^{m \times m}$, $\det C_k \ne 0$.

\item $D(\rho) = I_m + O(\rho^{-1})$, as $|\rho| \to \iy$.

\item $(D(\rho))^{-1} = O(1)$, as $\rho \to 0$.

\item $(D^*(\rho))^{-1} (D(\rho))^{-1} = G(\rho)$ for real $\rho \ne 0$.
\end{enumerate}
}
\smallskip

There is an extensive literature devoted to the Riemann (or Riemann-Hilbert) problem (see, for example, \cite{GK58, AF03, Gakhov}). 
However, the formulated problem RP differs from the classical case by the singularity at $\rho = 0$.
Here we prove the uniqueness theorem for RP.

\begin{thm}
If RP has a solution, this solution is unique.
\end{thm}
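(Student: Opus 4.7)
Let $D_1$ and $D_2$ be two solutions of RP. The plan is to form the matrix function $\tilde V(\rho) := D_2^{-1}(\rho) D_1(\rho)$ in the upper half-plane, extend it analytically to $\mathbb{C} \setminus \{0\}$ by Schwarz reflection, and then apply a Liouville-type argument to conclude $\tilde V \equiv I_m$, which gives $D_1 \equiv D_2$.

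First I would verify that $\tilde V$ is analytic in $\{\mbox{Im}\,\rho > 0\}$. The only candidates for singularities are the $\rho_k$, where $D_2^{-1}$ has a simple pole with residue $C_k^{(2)} N_k^+$. Reading off the coefficient of $(\rho - \rho_k)^{-1}$ in the identity $D_1^{-1}(\rho) D_1(\rho) \equiv I_m$ gives $C_k^{(1)} N_k^+ D_1(\rho_k) = 0_m$, hence $N_k^+ D_1(\rho_k) = 0_m$ because $\det C_k^{(1)} \ne 0$. Therefore $\Res\limits_{\rho = \rho_k} \tilde V = C_k^{(2)} N_k^+ D_1(\rho_k) = 0_m$, so $\tilde V$ extends analytically across each $\rho_k$. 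Next I would record the boundary data: Condition~3 yields $\tilde V(\rho) = I_m + O(\rho^{-1})$ as $|\rho| \to \iy$; Conditions~1 and~4 give that $\rho \tilde V(\rho)$ is continuous up to $\rho = 0$ from the closed upper half-plane; and Condition~5 applied to both $D_i$ gives $D_1 D_1^* = D_2 D_2^*$ on $\mathbb{R} \setminus \{0\}$, from which $\tilde V \tilde V^* = D_2^{-1} D_1 D_1^* (D_2^*)^{-1} = I_m$, so $\tilde V$ is unitary on $\mathbb{R} \setminus \{0\}$.

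The third step is the Schwarz reflection $\tilde V(\rho) := [\tilde V(\bar \rho)]^{-*}$ for $\mbox{Im}\,\rho < 0$, which is legitimate provided $\det \tilde V$ does not vanish in $\{\mbox{Im}\,\rho > 0\}$. Since $\det D_i$ vanishes in the upper half-plane only at the $\rho_k$, and the simple-pole assumption on $D_i^{-1}$ with residue of rank $\rank N_k^+$ forces the order of the zero of $\det D_i$ at $\rho_k$ to equal exactly $\rank N_k^+$, the ratio $\det \tilde V = \det D_1/\det D_2$ is analytic and non-zero throughout the upper half-plane. The reflection then produces $\tilde V$ analytic on all of $\mathbb{C} \setminus \{0\}$.

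Finally I would expand $\tilde V$ in a Laurent series $\tilde V(\rho) = \sum_{n \in \mathbb{Z}} C_n \rho^n$ around $0$, convergent on the whole punctured plane. The limit $\tilde V \to I_m$ at infinity forces $C_0 = I_m$ and $C_n = 0_m$ for $n \ge 1$; testing the bound $\|\rho \tilde V(\rho)\| \le M$ along $\rho = it$, $t \to 0^+$, forces $C_n = 0_m$ for $n \le -2$. Hence $\tilde V(\rho) = I_m + C_{-1}/\rho$, and substituting into $\tilde V(\rho) \tilde V^*(\rho) = I_m$ on $\mathbb{R}$ and comparing powers of $\rho$ yields $C_{-1} + C_{-1}^* = 0_m$ together with $C_{-1} C_{-1}^* = 0_m$, so that $C_{-1}$ is simultaneously anti-Hermitian (hence normal) and nilpotent, forcing $C_{-1} = 0_m$. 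The main obstacle I anticipate is the linear-algebra sublemma needed for the Schwarz reflection, namely that a simple pole of $D_i^{-1}$ at $\rho_k$ with a rank-$r_k$ residue forces $\det D_i$ to have a zero of exactly order $r_k$ at $\rho_k$; without this, $\det \tilde V$ could vanish in the upper half-plane and the reflected function would carry unwanted poles in the lower half-plane, breaking the Liouville step.
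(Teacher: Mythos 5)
Your strategy is sound and, in its second half, genuinely different from the paper's: both arguments begin the same way (form $U=D_2^{-1}D_1$, remove the poles at $\rho_k$ via the residue identity $N_k^+D_1(\rho_k)=0_m$, record unitarity on $\mathbb{R}\setminus\{0\}$ and the normalization at infinity), but the paper then controls the point $\rho=0$ by applying the Phragm\'en--Lindel\"of theorem to $V(\rho)=U(1/\rho)$ and finishes with Cauchy-integral (Plemelj-type) representations of $U-I_m$ and $U^*-I_m$ on the real line, whereas you reflect across the real axis by $[\tilde V(\bar\rho)]^{-*}$ and finish with a Laurent--Liouville argument. One simplification: the Smith-form sublemma you flag as the main obstacle can be bypassed entirely, since the same residue computation with the roles of $D_1,D_2$ interchanged shows that $\tilde V^{-1}=D_1^{-1}D_2$ is also analytic in the upper half-plane, so $\det\tilde V$ cannot vanish there; this also justifies the paper's unproved assertion $\det U(\rho)\neq 0$. (Both your sublemma and the residue identity itself tacitly use that the poles of $D_i^{-1}$ at $\rho_k$ are simple; the paper makes the same tacit assumption, so this is not a defect specific to your argument, but it deserves a sentence.)

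The one step that would fail as written is the elimination of $C_n$ for $n\le -2$: a bound $\|\rho\tilde V(\rho)\|\le M$ along the single ray $\rho=it$, $t\to 0^+$, does not control Laurent coefficients, which require bounds on full circles around $0$ (for instance $\exp(1/\rho)$ is bounded on that ray yet has an essential singularity at $0$). Moreover, the bound you actually have, $\tilde V=O(\rho^{-1})$ near $0$, is known only in the closed upper half-plane; in the lower half-plane $\tilde V$ is defined as the inverse-adjoint of its upper-half-plane values, and an upper bound on a matrix gives no bound on its inverse. The gap is fixable: Conditions 1 and 4 applied with $D_1$ and $D_2$ interchanged give $\tilde V^{-1}=D_1^{-1}D_2=O(\rho^{-1})$ near $0$ in the closed upper half-plane, hence $\tilde V(\rho)=[(\tilde V(\bar\rho))^{-1}]^*=O(|\rho|^{-1})$ in the lower half-plane as well; then $\rho\tilde V(\rho)$ is bounded on a full punctured neighbourhood of $0$, its singularity is removable, and your expansion $\tilde V(\rho)=I_m+C_{-1}/\rho$ together with unitarity (already $C_{-1}C_{-1}^*=0_m$ alone) forces $C_{-1}=0_m$. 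With these two repairs your proof is complete and is a legitimate, somewhat more elementary alternative to the paper's Phragm\'en--Lindel\"of and Cauchy-integral route.
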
 

\begin{proof}
Let, on the contrary, RP have two solutions $D_1(\rho)$ and $D_2(\rho)$.
Consider the matrix function $U(\rho) := (D_2(\rho))^{-1} D_1(\rho)$. Clearly, 
this function is analytical for $\mbox{Im}\,\rho > 0$ and continuous for $\mbox{Im}\,\rho \ge 0$, $\rho \ne 0$.
Moreover,
\begin{equation} \label{symU}
   U^*(\rho) U(\rho) = I_m, \quad \rho \in \mathbb{R} \backslash \{ 0 \},
\end{equation}
$$
   U(\rho) = I_m + O(\rho^{-1}), \quad |\rho| \to \iy, \quad U(\rho) = O(\rho^{-1}), \quad \rho \to 0.
$$
The matrix function $V(\rho) = U(\frac{1}{\rho})$ is analytical in the lower half-plane and continuous for
$\mbox{Im}\, \rho \le 0$, $V(0) = I_m$ and $V(\rho) = O(\rho)$ as $|\rho| \to \iy$.
By virtue of \eqref{symU}, the matrix function $V(\rho)$ is bounded for real $\rho$.
It follows from the Phragmen-Lindel\"of theorem \cite{Titch32}, that $V(\rho) = O(1)$ as $|\rho|\to \iy$ in the lower half-plane.
Hence $U(\rho) = O(1)$ as $\rho \to 0$.

Consider the closed contour $C_{\de, R}$, being the boundary of the region $\{ \rho \colon \mbox{Im}\,\rho \ge 0, \, \de < \rho < R \}$.
Cauchy's integral formula yields
$$
  	U(\rho) - I_m = \frac{1}{2 \pi i}\int\limits_{C_{\de, R}} \frac{U(\xi) - I_m}{\xi - \rho} \, d\xi, \quad \mbox{Im}\,\rho > 0.
$$  
Passing to the limit as $R \to \iy$ and $\de \to 0$ and taking the behavior of $U(\rho)$ at infinity and zero into account,
we obtain
\begin{equation} \label{intU1}
  U(\rho) - I_m = \frac{1}{2 \pi i} \int\limits_{-\iy}^{\iy} \frac{U(\xi) - I_m}{\xi - \rho} \, d\xi, \quad \mbox{Im}\,\rho > 0.
\end{equation}
Here and below when necessary, the integral is considered in the principle value sense.
For $\rho \in \mathbb{R} \backslash \{ 0 \}$, we derive
\begin{equation} \label{intU2}
  U(\rho) - I_m = \frac{1}{\pi i} \int\limits_{-\iy}^{\iy} \frac{U(\xi) - I_m}{\xi - \rho}\, d\xi.
\end{equation}

Note that $\det U(\rho) \ne 0$ for $\mbox{Im}\,\rho \ge 0$, $\rho \ne 0$, and the matrix function $(U(\rho))^{-1}$ has the similar properties
as $U(\rho)$. So we obtain the formula
$$
  (U(\rho))^{-1} - I_m = \frac{1}{\pi i} \int\limits_{-\iy}^{\iy} \frac{(U(\xi))^{-1} - I_m}{\xi - \rho}\, d\xi, \quad \rho \in \mathbb{R} \backslash \{ 0 \}.
$$
Using \eqref{symU}, we get
$$
  U^*(\rho) - I_m = \frac{1}{\pi i} \int\limits_{-\iy}^{\iy} \frac{U^*(\xi) - I_m}{\xi - \rho}\, d\xi, \quad \rho \in \mathbb{R} \backslash \{ 0 \}.
$$
On the other hand, it follows from \eqref{intU2}, that
$$
  U^*(\rho) - I_m = -\frac{1}{\pi i} \int\limits_{-\iy}^{\iy} \frac{U^*(\xi) - I_m}{\xi - \rho} \, d\xi, \quad \rho \in \mathbb{R} \backslash \{ 0 \}.
$$
Consequently, $U^*(\rho) - I_m = 0_m$ for real $\rho \ne 0$. Together with \eqref{intU1} this implies $U(\rho) \equiv I_m$,
$\mbox{Im}\,\rho \ge 0$, $\rho \ne 0$. Thus, the solution of RP is unique.

\end{proof}

Now consider the matrix Korteweg-de Vries equation
$$
   Q_t = 3 Q Q_x + 3 Q_x Q - Q_{xxx}.
$$
It is well known \cite{Gonch01}, that the evolution of the scattering data for it is described by the following relations:
$$
 	S_+(\rho, t) = S_+(\rho, 0) \exp(8 i \rho^3 t), \quad \rho_k(t) = \rho_k(0), \quad N_k^+(t) = N_k^+(0) \exp(8 i \rho_k^3 t), \quad k = \overline{1, N}.
$$
If the matrix function $D(\rho, 0)$ satisfies Condition B for the initial scattering data $J_+(0) = \{ S_+(\rho, 0), \rho_k(0), N_k^+(0) \}$,
then it solves RP for $G(\rho) = I_m - S_+^*(\rho, t) S_+(\rho, t)$, $\rho_k(t)$, $N_k^+(t)$ for every $t$.
Since this solution is unique, it remains to check the estimates \eqref{estR} for $R_{\pm}(x, t)$. The other requirements 
of Conditions A$_+$ and B are trivial.

\medskip

{\bf Acknowledgments}. This work was supported by Grant 1.1436.2014K
of the Russian Ministry of Education and Science and by Grants 14-01-31042 and 15-01-04864
of Russian Foundation for Basic Research.

\medskip

\medskip

\noindent Natalia Bondarenko \\
Department of Mechanics and Mathematics, Saratov State University, \\
Astrakhanskaya 83, Saratov 410012, Russia, \\
e-mail: {\it BondarenkoNP@info.sgu.ru}


\begin{thebibliography}{99}

\bibitem{Mar77}
Marchenko, V. A. Sturm-Liouville Operators and their Applications, Naukova Dumka,
Kiev (1977) (Russian); English transl., Birkhauser (1986).

\bibitem{Lev84}
Levitan, B.M. Inverse Sturm-Liouville Problems, Nauka, Moscow (1984) (Russian); English
transl., VNU Sci. Press, Utrecht (1987).

\bibitem{PT87}
P\"{o}schel, J; Trubowitz, E. Inverse Spectral Theory, New York, Academic Press (1987).

\bibitem{FY01}
Freiling, G.; Yurko, V. Inverse Sturm-Liouville problems and their applications. Huntington,
NY: Nova Science Publishers, 305 p. (2001).

\bibitem{CS77}
Chadan, K.; Sabatier, P. C. Inverse Problems in Quantum Scattering Theory, Springer-Verlag, New York (1977).

\bibitem{ZMNP}
Novikov, S.; Manakov, S.V.; Pitaevskii, L.P.; Zakharov, V.E. 
Theory of Solitons: The Inverse Scattering Method, Springer (1984).

\bibitem{AS81}
Ablowitz, M.; Segur H. Solitons and the Inverse Scattering Transform, SIAM, Philadelphia (1981).

\bibitem{FT87}
Faddeev, L. D.; Takhtajan, L. A. Hamiltonian methods in the theory of solitons, Springer-Verlag, Berlin (1987).

\bibitem{CD82}
Calogero, F; Degasperis, A. Spectral transform and solitons: tools to solve and investigate nonlinear evolution equations, North-Holland,
Amsterdam (1982).

\bibitem{Fad64}
Faddeev, L. D. Properties of the S-matrix of the one-dimensional Schr\"odinger equation, 
Amer. Math. Soc. Transl. Ser. 2, 65, Amer. Math. Soc., Providence, RI (1967), 139–-166.

\bibitem{AM60}
Agranovich, Z. S.; Marchenko, V. A. The inverse problem of scattering theory [in Russian],
KSU, Kharkov, 1960;
Gordon and Breach, New York, 1963 (Eng. Transl.).

\bibitem{WK74}
Wadati, M.; Kamijo, T. On the extension of inverse scattering method, Progr.
Theor. Phys. 52 (1974), 397–414.

\bibitem{CD77}
Calogero, F.; Degasperis, A. Nonlinear evolution equations solvable by the inverse 
spectral transform II, Nouvo Cimento B {\bf 39}:1 (1977).

\bibitem{Wad80}
Wadati, M. Generalized matrix form of the inverse scattering method, R. K.
Bullough and P. J. Caudry (eds.), Solitons, 287-–299, Topics in current physics,
vol. 17, Springer, Berlin (1980).

\bibitem{AO82}
Alonso, L. M.; Olmedilla, E. Trace identities in the inverse scattering transform
method associated with matrix Schr\"odinger operators, J. Math. Phys. 23 (1982),
2116–-2121.

\bibitem{Olm85}
Olmedilla, E. Inverse scattering transform for general matrix Schr\"odinger operators
and the related symplectic structure, Inverse Problems 1 (1985), 219–-236.

\bibitem{Schuur83}
Schuur, P. Inverse scattering for the matrix Schr\"odinger equation with non-Hermitian
potential, L. Debnath (ed.), Nonlinear waves, 285-–297, Cambridge Monographs
Mech. Appl. Math., Cambridge Univ. Press, Cambridge--New York (1983).

\bibitem{AG98}
Alpay, D. Gohberg, I. Inverse problem for Sturm-Liouville operators with rational
reflection coefficient, Integr. Equat. Oper. Th. 30 (1998), 317–-325.

\bibitem{Bond11}
Bondarenko, N. Spectral analysis for the matrix Sturm-Liouville operator on a finite interval,
Tamkang J. Math. {\bf42}: 3 (2011), 305--327.

\bibitem{AK01}
Aktosun, T.; Klaus, M. Small-energy asymptotics for the Schr\"odinger equation on the line. 
Inverse Problems {\bf 17}:4, (2001), 619–-632.

\bibitem{GK58}
Gohberg, I. C.; Krein, M. G. Systems of integral equations on the half-line with kernels depending on 
the difference of the arguments. (Russian) Uspehi Mat. Nauk (N.S.) {\bf 13}:2(80) (1958), 3–-72.

\bibitem{AF03}
Ablowitz, M. J.; Focas, A. S. Complex Viriables. Introduction and Applications, 2nd edition, 
Cambridge Texts in Appl. Math., Cambridge Univ. Press, Cambridge--New York (2003).  

\bibitem{Gakhov}
Gakhov, F. D. Boundary Value Problems, 
International Series of Monographs in Pure and Applied Mathematics, Vol. 85, Oxford/London/Edinburgh/New York/Paris/Frankfurt (1966).
  
\bibitem{Titch32}
Titchmarsh, E. C. The Theory of Functions. 2nd edition. Oxford University Press, 460 p. (1939). 

\bibitem{Gonch01}
Goncharenko, V. M. Multisoliton solutions of the matrix KdV equation, Theor. Math. Phys. {\bf 126}:1 (2001), 
81--91.

\end{thebibliography}
\end{document}